\numberwithin{equation}{section}
\def\@abssec#1{\vspace{.05in}\footnotesize \parindent .2in
{\bf #1. }\ignorespaces}
\newtheorem{theorem}{Theorem}[section]
\newtheorem{lemma}[theorem]{Lemma}
\newtheorem{proposition}[theorem]{Proposition}
\newtheorem{definition}[theorem]{Definition}
\newtheorem{remark}[theorem]{Remark}
\newtheorem{hypothesis}[theorem]{Hypothesis}
\newcommand{\gb}[1]{{{\color{blue}{GB: #1}}}}
\def \Rm {\mathbb R}
\def \Nm {\mathbb N}
\def \Cm {\mathbb C}
\def \Zm {\mathbb Z}
\newcommand{\eps}{\varepsilon}
\newcommand{\E}{\mathbb E}
\newcommand{\dsum}{\displaystyle\sum}
\newcommand{\dint}{\displaystyle\int}
\newcommand{\aver}[1]{\langle {#1} \rangle}
\newcommand{\mB}{\mathcal B}
\newcommand{\mF}{\mathcal F}
\newcommand{\mH}{\mathcal H}
\newcommand{\mD}{\mathfrak D}
\newcommand{\fa}{{\mathfrak a}}
\newcommand{\fS}{{\mathfrak S}}
  \newcommand{\rJ}{{\rm J}}
\newcommand{\cout}[1]{}
\newcommand{\rME}{{\rm M}(E)}
\newcommand{\R}{{\rm R}}
\newcommand{\Tr}{{\rm Tr}}
\newcommand{\trr}{{\rm tr}}
\newcommand{\fco}{\Xi}
 \renewcommand{\arraystretch}{1.5}
\title{Scattering theory of topologically protected edge transport}
\author{ Binglu Chen \thanks{Department of Mathematics, University of Chicago, Chicago, IL 60637; {\tt blchen@uchicago.edu}} \and  Guillaume Bal \thanks{Departments of Statistics and Mathematics and CCAM, University of Chicago, Chicago, IL 60637; guillaumebal@uchicago.edu}  }
\begin{document}
 
\maketitle
%\tableofcontents

\begin{abstract}
 This paper develops a scattering theory for the asymmetric transport observed at interfaces separating two-dimensional topological insulators. Starting from the spectral decomposition of an unperturbed interface Hamiltonian, we present a limiting absorption principle and construct a generalized eigenfunction expansion for perturbed systems. We then relate a physical observable quantifying the transport asymmetry to the scattering matrix associated to the generalized eigenfunctions. In particular, we show that the observable is concretely expressed as a difference of transmission coefficients and is stable against perturbations.  We apply the theory to  systems of perturbed Dirac equations with asymptotically linear domain wall. 
\end{abstract}

\renewcommand{\thefootnote}{\fnsymbol{footnote}}
\renewcommand{\thefootnote}{\arabic{footnote}}

\renewcommand{\arraystretch}{1.1}

%\begin{keywords}
%\end{keywords}

\noindent{\bf Keywords:} Scattering theory, spectral theory, topological insulators, asymmetric edge transport, edge current observable, Dirac operator.

\noindent{\bf AMS:} 35P25, 81Q05, 81U20, 47A53.
%\begin{AMS}
%%% 35P25, 
%\end{AMS}

%\pagestyle{myheadings}
%\thispagestyle{plain}

%%%%%%%%%%%%%%%%%%%%%%
%%% BEGINNING TEXT %%%
%%%%%%%%%%%%%%%%%%%%%%
%
%%
%%%
\section{Introduction}
%%%
%
Topological insulators find applications in many areas in condensed matter physics, photonics, and geophysical sciences; see, e.g.,   \cite{BH, delplace, sato, Volovik, Witten} for pointers to a vast literature.  A characteristic feature of such two-dimensional systems is the topologically protected asymmetric transport observed along one-dimensional interfaces separating two-dimensional insulating bulks. 

If $H$ is a Hamiltonian describing transport in such a two-dimensional system on the Euclidean plane $\Rm^2$, the asymmetry along the edge is captured by the following physical observable:
\begin{equation}\label{eq:edgeconductivity}
  \sigma_I[H] = {\rm Tr} \, i[H,P]\varphi'(H).
\end{equation}
Here, $P=P(x)\in\fS[0,1]$ and $\varphi(E)\in \fS[0,1,E_-,E_+]$  are smooth switch functions, where $\fS[a,b,c,d]$ is the set of bounded (measurable) functions on $\Rm$ equal to $a$ for $x<c$ and equal to $b$ for $x>d$ while $\fS[a,b]$ is their union over (finite) $c<d$.  Tr is operator trace on an appropriate Hilbert space. The function $P(x)$ heuristically models projection onto a half space $x>x_0$ for some arbitrary $x_0\in\Rm$ with $(x,y)$ spatial coordinates in $\Rm^2$, while the operator $i[H,P]$ may then be interpreted as a current operator (rate of current flow from the left to the right of the vertical line $x=x_0$ per unit time). Finally, $0\leq \varphi'(E)$ captures a density of states of interest.  We assume $\varphi'(E)$ supported in an energy interval $[E_-,E_+]$ with $\varphi(E_-)=0$ while $\varphi(E_+)=1$. The interval $[E_-,E_+]$ is such that propagation for such an energy range only occurs along the interface $y\approx0$. In other words, a confining mechanism encoded in the Hamiltonian $H$ implies a suppressed propagation into the bulk for this energy range. Thus $\sigma_I$ models the expected value of the current operator for excitations in the system with density $\varphi'(E)$ supported in the energy interval $[E_-,E_+]$.

The interface current observable first introduced in \cite{SB-2000} has been used in a variety of contexts in the study of topological insulators \cite{BH, Drouot, Elbau, elgart2005equality}. See \cite{2, 3, bal2023topological,quinn2021approximations} for an analysis of \eqref{eq:edgeconductivity} when $H$ is a differential operator acting on functions on $\Rm^2$, which is the setting we consider in this  paper. In all these instances, one can prove that $2\pi\sigma_I\in\Zm$ is quantized and describes a transport asymmetry when it is non-vanishing.

\medskip

The main objective of this paper is to construct a quantitative theory for such an interface transport and to understand how the latter is affected by the non-triviality of $2\pi\sigma_I\in\Zm$. The theoretical results obtained in the paper then serve as foundations for robust and accurate numerical simulations of the interface transport, as performed in, e.g., \cite{bal2023asymmetric,bal2023mathbb}. 

We consider the following setting.  Let $H_0$ be an unperturbed operator invariant with respect to spatial translations in $x$ so that plane waves along the interface may be identified as generalized eigenfunctions of $H_0$. We will show how $\sigma_I$ may be expressed in terms of such eigenfunctions \cite{bal2023asymmetric}.  For elliptic operators $H_0$, the topological protection of the asymmetric transport states \cite{2, 3,quinn2021approximations} that $\sigma_I[H_0]=\sigma_I[H_0+Q]$ for a large class of perturbations $Q$. The asymmetric transport may thus be interpreted as an obstruction to Anderson localization \cite{1,PS}: no matter how large $Q$ is (so long as it is localized spatially), some transmission is guaranteed by the non-vanishing current $2\pi\sigma_I\not=0$. Such an obstruction is analyzed numerically in \cite{bal2023mathbb}.

This paper introduces a scattering theory for $H=H_0+Q$ in the setting where the spectral decomposition of $H_0$ is {\em known} and $Q$ is a sufficiently short-range perturbation. More precisely, for an energy $E\in[E_-,E_+]$ , we wish to show the existence of generalized plane waves solutions of $H\psi_m=E\psi_m$ and construct a scattering matrix $S=S(E)$ from such functions $\psi_m$. We also aim to show that $2\pi\sigma_I\in\Zm$ is directly related to the coefficients of the scattering matrix $S$, see Theorem \ref{thm:scatmatrix} below. 

The spectral analysis of perturbations $H_0+Q$ of constant-coefficient differential operators $H_0$ in any spatial dimension is well developed; see, e.g., \cite{ASNSP_1975_4_2_2_151_0,H-II-SP-83,simon1982schrodinger,teschl2014mathematical}. Much less is known when $H_0$ is modeling such interface transport, as an operator on functions of $\Rm^2$ while transport is only allowed along a one-dimensional submanifold. The simplest such operator is arguably $H_0=-\partial_x^2-\partial_y^2+ y^2$, for which the theory presented in this paper applies. We construct such a scattering theory based on the limiting absorption principle following \cite{ASNSP_1975_4_2_2_151_0}. This will allow us to show that the perturbed operator $H_0+Q$ has a spectral decomposition based on generalized eigenfunctions out of which an energy-dependent unitary scattering matrix may be defined and related to the edge current observable $\sigma_I$.

The above spectral theory requires the unperturbed operator $H_0$ to satisfy a number of hypotheses. We show in this paper that the hypotheses are satisfied when $H_0$ is a Dirac operator with an unbounded domain wall. This is the setting considered in the numerical simulations in \cite{bal2023asymmetric,bal2023mathbb}. The limiting absorption principle and the theory eigenfunction expansions of \cite{ASNSP_1975_4_2_2_151_0} have been implemented in detail for local perturbations of constant-coefficient Dirac operators in \cite{yamada, yamada1975eigenfunction}. Our analysis of Dirac operators with domain walls closely follows the structure presented in these papers. References on scattering theory, the limiting absorption principle, and generalized eigenfunction expansions that are relevant to the current work include \cite{ASNSP_1975_4_2_2_151_0,H-II-SP-83,ikebe1960eigenfunction,kato2013perturbation,RS4,simon1982schrodinger,teschl2014mathematical,yamada,yamada1975eigenfunction}.

\medskip

The rest of the paper is structured as follows. Section \ref{sec:main} presents the main hypotheses required on the unperturbed operator $H_0$ and on the perturbation $Q$ and the main results of the paper. The scattering theory for $H$ under such hypotheses and its relation to $2\pi\sigma_I$ is given in section \ref{sec:current}. The proof of the limiting absorption principle necessary to develop the scattering theory is given in section \ref{sec:lap}, and in particular guarantees the discreteness of the point spectrum and the absence of singular continuous spectrum.   The derivation of the eigenfunction expansion for the perturbed operator is given in section \ref{sec:eigexp}.  Finally, the detailed verification of all necessary hypotheses for a class of Dirac operators is carried out in section  \ref{sec:Dirac}.

%
%%
%%%
\section{Main assumptions and main results}\label{sec:main}
%%%
%%
%
This section first describes the main hypotheses required to define a scattering theory.

Let $\mH=L^2(\Rm^2)\otimes\Cm^q$ be the Hilbert space of vector-valued functions with square-integrable entries defined on the Euclidean plane with coordinates $(x,y)$. We define the following spaces.
\begin{definition}\label{def:LHs}
For $s\in\Rm$ and $p\geq0$, we define the Hilbert spaces $L^2_{s}$ and $H^p_s$ as the completion of $C^\infty_c(\Rm^2)$ for the norms
\begin{align}\label{eq:Hsp} \nonumber
\|u\|_{L^2_{s}} &\ = \ \Big(\int_{\mathbb{R}^2}\aver{x}^{2s} |u(x,y)|^2dxdy\Big)^{\frac{1}{2}} \\
\|u\|_{H_s^p} & \ = \  \Big(\int_{\mathbb{R}^2}\Big[\aver{x}^{2s}\aver{y}^{2p}|u(x,y)|^2+\aver{x}^{2s}\sum_{|\alpha|= p}|D^\alpha u(x,y)|^2\Big]dxdy\Big)^{\frac{1}{2}}.
\end{align}
Here, $\aver{x}:=\sqrt{1+x^2}$. We denote $H^p = H^p_0$. We also denote by $H^p_s$ ($L^2_s$) the space of vector-valued functions $H^p_s\otimes \Cm^q$ ($L^2_s\otimes\Cm^q$).
\end{definition}

We then start with an unperturbed self-adjoint operator $H_0$ from $\mD(H_0)=H^p$ to $\mH$ that is invariant with respect to translations in $x$. Thus, $H_0=\mF_{\xi\to x}^{-1} \hat H_0(\xi) \mF_{x\to\xi}$ with $\mF_{x\to\xi}$ Fourier transform in the first variable $x$ (see \eqref{eq:unperturbedxi} for convention) and $\Rm\ni \xi\mapsto \hat H_0(\xi)$ a family of self-adjoint operators on $L^2(\Rm)\otimes\Cm^q$.  For $H_0$ an elliptic operator of order $p$, which is the framework of interest in this paper, standard ellipticity results show that the generalized eigenfunction  $\psi_j(x,y;\xi)$ defined in \eqref{eq:unperturbedxi} below is an element in $H^p_{-s}$ for $s>\frac12$. We have by assumption the spectral decomposition
\begin{equation}\label{eq:spectraldecH0}
  \qquad   H_0  = \dsum_{j\in J} \dint_{\Rm} E_j(\xi) \Pi_j(\xi) d\xi,\qquad \Pi_j(\xi) = \psi_j(\cdot;\xi) \otimes \psi_j(\cdot;\xi).
\end{equation}
Associated to the above decomposition is the following resolution of identity. Let $\fco=(j,\xi)\in J\times \Rm$, where $J\cong \Nm$. We define for $f\in L^2(\Rm^2)\otimes\Cm^q$ the (unperturbed) Fourier transform:
\begin{equation}\label{eq:FT}
    \hat f (\Xi) =  (\mF f)(\Xi) := \dint_{\Rm^2} \overline{\psi_j(x,y,\xi)} \cdot f(x,y) dxdy = (f,\psi_j(\cdot,\xi)),
\end{equation}
where $(f,g)=\int_{\Rm^2} f(x,y)\cdot \bar g(x,y) dxdy$ is the inner product on $\mH$,
with inverse Fourier transform:
\begin{equation}\label{eq:IFT}
    f (x,y) =  (\mF^{-1} \hat f)(x,y) := \dsum_j \dint_{\Rm} \hat f(\Xi)  \psi_j(x,y,\xi) d\xi.
\end{equation}
The Fourier transform is an isometry from $\mH=L^2(\Rm^2,dxdy)\otimes\Cm^q$ to $L^2(J\times \mathbb{R},d\fco;\Cm)$, with $d\fco$ the Cartesian product of the counting measure on $J$ and the Lebesgue measure on $\Rm$.

The main hypotheses we request on $H_0$ are summarized as follows:

\begin{hypothesis}[${\rm [H1]}$] \label{hyp:H1}
(o) We assume that $H_0$ is a self-adjoint (elliptic) differential operator with domain $H^p\otimes \Cm^q$ and resolvent operator $R_0(i)=(H_0-i)^{-1}$ bounded from $\mH$ to $H^p\otimes \Cm^q$.
\\[1mm]
(i) For each $\xi\in\Rm$, $\hat H_0(\xi)$ has a compact resolvent and hence purely discrete spectrum. We assume the existence of generalized eigenfunctions in $H^p_{-s}$ for $s>\frac12$, solutions
\begin{equation}\label{eq:unperturbedxi}
 \psi_j(x,y;\xi) = \frac{1}{\sqrt{2\pi}}  e^{i\xi x} \phi_j(y;\xi),
\end{equation}
of the eigenvalue problem $(H_0-E_j(\xi))\psi_j=0$ with $(\phi_j)_j$ an orthonormal basis of $L^2(\Rm_y)\otimes\Cm^q$, i.e., $(\phi_j,\phi_k)_{L^2(\Rm_y)\otimes\Cm^q}=\delta_{jk}$. Here, $j\in J \cong \Nm$. 
\\[1mm]
(ii) We assume that the branches of absolutely continuous spectrum $j\mapsto E_j(\xi)$ are smooth and satisfy $|E_j(\xi)|\to\infty$ as $|\xi|\to\infty$ with $\xi\mapsto (1+|E_j(\xi)|^2)^{-1}$ integrable for $j\in J$. We assume that for any interval $[a,b]$, only a finite number of branches $\xi \mapsto E_j(\xi)$ cross $[a,b]$. 
\\[1mm] (iii)
The spectral elements $E_j(\xi)$ and $\Pi_j(\xi)$ are assumed to be smooth in $\xi$ with a finite number of critical values. Define
\begin{equation}\label{eq:Z}
  Z = \big\{ E \in \Rm ;\  E=E_j(\xi) \mbox{ for some } (j,\xi)\in J\times\Rm \mbox{ and } \partial_\xi E_j(\xi)=0 \big\}.
\end{equation}
We assume the set $Z$ of critical values to be  finite in each bounded interval $[E_-,E_+]$.
\\[1mm]
(iv) To set up a scattering theory, we finally assume the following completeness property: for any $E\in \Rm\backslash Z$, then any solution $\psi\in H^p_{-s}$ of $(H_0-E)\psi=0$ is a linear combination of the generalized eigenfunctions $\psi_j(x,y;\xi)$ for values of $\xi$ such that $E_j(\xi)=E$. We label $\psi_m(x,y;E)=\psi_j(x,y;\xi_m)$ for $1\leq m\leq \rME$ the corresponding solutions at $E$ fixed.  Up to (obvious) relabeling, we thus have
\begin{equation}\label{eq:unperturbedE}
  \psi_m(x,y;E) =\frac{1}{\sqrt{2\pi}}  e^{i\xi_m(E) x} \phi_m(y;\xi_m(E)),\qquad 1\leq m\leq \rME.
\end{equation}\end{hypothesis}

Whereas the spectral representation \eqref{eq:spectraldecH0} defines spectral branches $\Xi\mapsto E(\Xi)\equiv E_j(\xi)$, scattering theory requires the inverse map $E\mapsto \Xi(E)\equiv \xi_m(E)$ for $1\leq m\leq \rME$. This implicitly defines $\rME$ as the (locally finite) number of generalized eigenvectors corresponding to a given energy level $E$; see Fig.\ref{fig:1} for an illustration for Dirac operators with linear domain walls. 

\medskip

Consider now a perturbed operator $H=H_0+Q$ where $Q$ is a short-range operator. We consider the case where $Q$ is an operator of multiplication by $Q(x,y)$ with the $q\times q$-valued (measurable) function $Q(x,y)$ such that $\aver{x}^{1+\eps} |Q(x,y)|\leq C$ uniformly in $(x,y)\in\Rm^2$ for some $\eps>0$. We thus assume $Q$ sufficiently rapidly decaying (only) in the $x$ variable. 

We next assume for each $\Xi=(j,\xi)$ the existence of modified generalized eigenfunctions $\psi^Q_j\in H^p_{-s}$ solution of $H\psi^Q_j=E_j(\xi)\psi^Q_j$ such that the following spectral decomposition holds:
\begin{equation}[{\rm H2}]\label{eq:spectraldecH}
   \qquad H  = \dsum_n \lambda_n \Pi_n + \dsum_j\dint_{\Rm} E_j(\xi) \Pi^Q_j(\xi) d\xi,\quad \Pi^Q_j(\xi) = \psi^Q_j(\cdot;\xi) \otimes \psi^Q_j(\cdot;\xi).
\end{equation}
Here, the sum over $n$ corresponds to point spectrum, which is discrete (locally finite) away from possibly a discrete set of essential (point) spectrum, with eigenvalues $\lambda_n$ and (rank-one) projectors $\Pi_n = \psi_n\otimes \psi_n$ for eigenfunctions $\psi_n\in\mH$. The above decomposition thus implies that the projectors $\Pi_n$ and $\Pi^Q_j(\xi)d\xi$ form a (complete) resolution of identity.  It imposes that the branches of absolutely continuous spectrum for $H$ and $H_0$ be the same. This is consistent with the standard result that two operators $H_1$ and $H_2=H_1+V$ with $V$ a trace-class perturbation have unitarily equivalent absolutely continuous spectrum \cite{kato2013perturbation}. While $H_0$ has only absolutely continuous spectrum, the perturbation $H=H_0+Q$ may possess discrete point spectrum. The justification of [H2] in practice will come from obtaining a generalized Fourier transform theory where $\psi_j$  in \eqref{eq:FT} and \eqref{eq:IFT} is replaced by the generalized plane wave $\psi^Q_j$.

\medskip

Finally, in order to be able to construct scattering matrices at a fixed level $\Rm\ni E\not\in Z$, we still denote by $\psi^Q_m$ the solution of the problem $(H-E)\psi^Q_m=0$ with $\psi^Q_m = \psi^Q_j(\cdot,\xi_m)$ for $E_j(\xi_m)=E$ and $1\leq m\leq \rME$. We now impose the final assumption on these generalized eigenfunctions that for $|x|$ large, then $\psi^Q_m$ is approximately given by a linear combination of the unperturbed solutions $\psi_n$ for $1\leq n\leq \rME$. More precisely, we assume that for $1\leq m\leq\rME$,
\begin{equation}[{\rm H3}]\label{eq:approxscattering}
  \qquad \psi_m^Q(x,y) \approx  \dsum_{1\leq n\leq \rME} \alpha^\pm_{mn} \psi_n(x,y) =  \dsum_{1\leq n\leq \rME} \alpha^\pm_{mn} \frac{1}{\sqrt{2\pi}} e^{i\xi_n x}  \phi_n(y) ,
\end{equation}
for some coefficients $\alpha_{mn}^\pm\in\Cm$, where $a \approx b$ means that the difference $a-b$ converges to $0$ uniformly (in $x$ as a square-integrable function in $y\in\Rm$) as $x\to\pm\infty$ and where $\alpha^\pm$ are the corresponding coefficients in these two limits.

\paragraph{Scattering matrix and edge current observable.} Let us assume a system for which [H1], [H2] and [H3] above hold. In particular, let $E\in\Rm\backslash Z$ be fixed and $\rME$ be the corresponding number of generalized eigenvectors. Define the currents
\begin{equation}\label{eq:currents}
   J_m= J_m(E) = \partial_\xi E_m(\xi_m) \not=0
\end{equation}
which do not vanish for $E\not\in Z$ not being a critical value of the branches of absolutely continuous spectrum. 

We may then define the following reflection and transmission coefficients $R^\pm_{mn}$ and $T^\pm_{mn}$ as
\begin{equation}\label{eq:RTmn}
  \begin{array}{rcll}
  T^+_{mn} &=& \sqrt{\frac{|J_n|}{|J_m|}}\ \alpha^+_{mn} \ \ &\mbox{when} \ \ J_m>0 \mbox{ and } J_n>0\\[2mm]
   T^-_{mn} &=& \sqrt{\frac{|J_n|}{|J_m|}}\ \alpha^-_{mn} \ \ &\mbox{when} \ \ J_m<0 \mbox{ and } J_n<0\\[2mm]
   R^+_{mn} &=&  \sqrt{\frac{|J_n|}{|J_m|}}\ \alpha^-_{mn} \ \ &\mbox{when} \ \ J_m<0 \mbox{ and } J_n>0\\[2mm]
    R^-_{mn} &=&  \sqrt{\frac{|J_n|}{|J_m|}}\ \alpha^+_{mn} \ \ & \mbox{when} \ \ J_m>0 \mbox{ and } J_n<0.
  \end{array}
\end{equation}
Let then $T_+$ be the $n_+\times n_+$ matrix of coefficients $T^+_{mn}$ above, while $T_-$ is the $n_-\times n_-$ matrix of coefficients $T^-_{mn}$, $R_+$ is the $n_-\times n_+$ matrix of coefficients $R^+_{mn}$, and finally $R_-$ is the $n_+\times n_-$ matrix of coefficients $R^-_{mn}$.  We note for completeness that we also have $\alpha^-_{mm}=1$ when $J_m>0$ and $\alpha^+_{mm}=1$ when $J_m<0$. All other coefficients $\alpha^\pm_{ij}$ not mentioned above then vanish.

Here $n_+=n_+(E)$ is the number of generalized eigenfunctions $\psi_m$ with $J_m>0$ while $n_-=n_-(E)$ is the number of generalized eigenfunctions $\psi_m$ with $J_m<0$. Thus, $n_++n_-=\rME$. We then have the following result.
\begin{theorem}\label{thm:scatmatrix}
 The $(n_++n_-)\times(n_++n_-)$ scattering matrix
 \begin{equation}\label{eq:scatteringmatrix}
  S = S(E) = \begin{pmatrix} T_+ & R_- \\R_+&T_-\end{pmatrix}
\end{equation}
is unitary. Let $\varphi'(H)$ be supported in $[E_-,E_+]$ such that $E\in[E_-,E_+]$and $[E_-,E_+]\cap Z=\emptyset$. Then, the interface current observable may be recast as 
 \[
  2\pi \sigma_I =  \trr\ T^*_+T_+ - \trr\ T^*_-T_- = n_+-n_-.
 \]
\end{theorem}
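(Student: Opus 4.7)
The argument decomposes into two parts: (a) proving that the scattering matrix $S$ is unitary via a bilinear current-conservation identity for solutions of $(H-E)\psi=0$; and (b) computing $2\pi\sigma_I$ by expanding $\Tr(i[H,P]\varphi'(H))$ in the spectral basis [H2] and exhibiting a telescoping identity that yields $n_+-n_-$. The final equality $\trr T_+^*T_+ - \trr T_-^*T_- = n_+-n_-$ is then an algebraic consequence of the unitarity.

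\textbf{(a) Unitarity.} For $u,v \in H^p_{-s}$ solving $(H-E)u = (H-E)v = 0$, define the bilinear form $\mathcal{J}(u,v;P) := \langle u, i[H,P]v\rangle$, noting that $[H,P]=[H_0,P]$ since the multiplication operator $Q$ commutes with $P$. I first show $\mathcal{J}$ is independent of $P$: if $\chi := P_1-P_0$ has compact support, then $\chi v\in\mD(H)$, and distributional pairing against the eigenequation gives
\[
 \langle u,[H,\chi]v\rangle = \langle Hu,\chi v\rangle - \langle u,\chi Hv\rangle = E\langle u,\chi v\rangle - E\langle u,\chi v\rangle = 0.
\]
Translating the transition of $P$ to $x_0\to\pm\infty$ and substituting [H3], $\mathcal{J}(\psi^Q_m,\psi^Q_{m'})$ reduces to a sum of plane-wave pairings $\langle\psi_n,i[H_0,P]\psi_{n'}\rangle$ at the common energy $E$. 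A direct calculation using Feynman--Hellmann establishes the same-energy orthogonality
\[
 \langle \psi_n, i[H_0,P]\psi_{n'}\rangle = \delta_{nn'}\, J_n/(2\pi):
\]
for $n=n'$ the value equals $\langle\phi_n,\partial_\xi\hat H_0(\xi_n)\phi_n\rangle/(2\pi) = J_n/(2\pi)$; for $\xi_n\neq\xi_{n'}$ at common $E$, self-adjointness of $\hat H_0(\xi)$ forces $\langle\phi_n,(\hat H_0(\xi_n)-\hat H_0(\xi_{n'}))\phi_{n'}\rangle = 0$, and an integration by parts in $x$ against the oscillating factor $e^{i(\xi_{n'}-\xi_n)x}P'(x)$ kills the cross term. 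Equating $\mathcal{J}(\psi^Q_m,\psi^Q_{m'})$ at $+\infty$ and $-\infty$ for all pairs $(m,m')$ and repackaging the $\alpha^\pm_{mn}$ via the normalizations in \eqref{eq:RTmn} yields the diagonal and off-diagonal block identities of $S^*S = SS^* = I$.

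\textbf{(b) Trace formula.} By [H2],
\[
 2\pi\sigma_I = 2\pi\sum_j\int_\Rm \varphi'(E_j(\xi))\, \langle\psi^Q_j(\xi), i[H,P]\psi^Q_j(\xi)\rangle\, d\xi,
\]
the point-spectrum contribution vanishing because $L^2$ bound states $\psi_n$ lie in $\mD(H)$ and self-adjointness gives $\langle\psi_n,i[H,P]\psi_n\rangle = 0$. From (a), the diagonal matrix element is $P$-independent; evaluating at the side where $\psi^Q_j$ reduces to the single plane wave $\psi_j$ (using $\alpha^-_{jj}=1$ when $J_j>0$ or $\alpha^+_{jj}=1$ when $J_j<0$) gives $\langle\psi^Q_j,i[H,P]\psi^Q_j\rangle = J_j(\xi)/(2\pi)$. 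Substituting, the integrand becomes $\varphi'(E_j(\xi))\partial_\xi E_j(\xi) = \partial_\xi\varphi(E_j(\xi))$, and the telescoping yields
\[
 2\pi\sigma_I = \sum_j\bigl[\varphi(E_j(+\infty)) - \varphi(E_j(-\infty))\bigr] = n_+ - n_-,
\]
each bracket being $0$ or $\pm1$ under [H1](ii), and the signed total counting the net number of positive-slope minus negative-slope crossings of the energy level by the branches; this count is constant on $[E_-,E_+]$ since $[E_-,E_+]\cap Z=\emptyset$. Finally, the algebraic identity $\trr T_+^*T_+ - \trr T_-^*T_- = n_+-n_-$ follows from unitarity by tracing $T_+T_+^* + R_-R_-^* = I_{n_+}$ (from $SS^*=I$) and $R_-^*R_- + T_-^*T_- = I_{n_-}$ (from $S^*S=I$): using $\trr(AA^*)=\trr(A^*A)$, $\trr T_+^*T_+ = n_+ - \trr R_-^*R_-$ and $\trr T_-^*T_- = n_- - \trr R_-^*R_-$, so the common $\trr R_-^*R_-$ cancels upon subtraction.

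\textbf{Main obstacle.} The principal technical difficulty is the rigorous trace-class interpretation of $i[H,P]\varphi'(H)$: although $\varphi'(H)$ alone is not trace class on $\mH$, the spatial localization of $[H,P]$ near the transition of $P$ combined with the compactly supported energy cut-off should render the product trace class and allow its expansion via [H2]. A closely related subtlety is the validity of the $P$-independence of $\mathcal{J}$ when $u,v$ are generalized eigenfunctions in $H^p_{-s}$ (not in $H^p$), which requires careful tracking of polynomial growth in $x$ when integrating by parts against a compactly supported $\chi$, and of the local regularity needed so that $H(\chi v)$ pairs meaningfully with $u$.
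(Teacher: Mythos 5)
Your plan correctly identifies the two pillars of the argument (unitarity via current conservation, then a spectral decomposition of $\sigma_I$), and part (a) and the closing algebraic step from unitarity are essentially the paper's route. However, part (b) contains a genuine gap at the central step.

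\textbf{The error in (b).} You claim that $\langle\psi^Q_j,i[H,P]\psi^Q_j\rangle = J_j/(2\pi)$ because ``at one side'' the generalized eigenfunction $\psi^Q_j$ reduces to the single plane wave $\psi_j$. This is not so. In a scattering state, the incoming side carries the incident plane wave \emph{plus} the reflected waves: for $J_j>0$, as $x_0\to-\infty$ one has $\psi^Q_j \approx \psi_j + \sum_{J_n<0}\alpha^-_{jn}\psi_n$, so by the same-energy pairing formula the diagonal current element is
\begin{equation*}
  \langle\psi^Q_j, 2\pi i[H,P]\psi^Q_j\rangle \ \approx\ J_j + \sum_{J_n<0}|\alpha^-_{jn}|^2 J_n \ =\ J_j - \sum_{J_n<0}|\alpha^-_{jn}|^2|J_n|,
\end{equation*}
which is strictly less than $J_j$ whenever reflection is present; the analogous expansion at $x_0\to+\infty$ gives the transmitted current, equal by Lemma~\ref{lem:currentconservation}, but again not $J_j$. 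Consequently the integrand is not $\varphi'(E_j(\xi))\,\partial_\xi E_j(\xi)$, and the telescoping identity you invoke is not available mode-by-mode. The reflected contributions only cancel after summing over all incident modes $m$ and appealing to the unitarity you proved in (a); that is precisely what Proposition~\ref{prop:sigmaIpsimQ} in the paper does: it computes $\sum_m |\partial_E\xi_m| \langle\psi^Q_m,2\pi i[H,P]\psi^Q_m\rangle$ as a whole, obtains $\trr T_+^*T_+$ plus reflection terms minus $n_-$, and uses $S^*S=I$ to collapse this to $n_+-n_-$. Your individual-mode evaluation skips this cancellation and so does not constitute a proof, even though the final number happens to be correct.

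\textbf{What to change.} Keep part (a) and the final algebraic identity. Replace the evaluation $\langle\psi^Q_j,i[H,P]\psi^Q_j\rangle = J_j/(2\pi)$ by the full expansion $\langle\psi^Q_m,2\pi i[H,P]\psi^Q_m\rangle \approx \sum_p J_p |\alpha^\pm_{mp}|^2$ (both the incident/reflected piece and the transmitted piece appear). Then change variables $\xi\mapsto E$ as you did, pull $\varphi'(E)$ out, and evaluate $\sum_m |\partial_E \xi_m|\sum_p J_p|\alpha^+_{mp}|^2$; with the normalizations \eqref{eq:RTmn} this becomes $\trr T_+^*T_+ - (\text{reflection traces}) - n_-$, and the unitarity from (a) gives $n_+-n_-$. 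Finally $\int\varphi'(E)\,dE=1$ since $\varphi\in\fS[0,1]$. This is the paper's Proposition~\ref{prop:sigmaIpsimQ} plus the proof of Theorem~\ref{thm:scatmatrix}, and it is not reducible to a branch-by-branch telescoping of $\varphi(E_j(\xi))$. The trace-class and $H^p_{-s}$-pairing subtleties you flag at the end are real but secondary to this issue.
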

This result shows that the edge current observable, which is independent of the perturbation $Q$ and hence given by $n_+-n_-$ that may directly be read off the unperturbed operator $H_0$, is also given by the excess of transmission in one direction compared to the other direction $\trr\ T^*_+T_+ - \trr\ T^*_-T_-$, which is intuitively reasonable. Note that the coefficients in $S(E)$ strongly depend on the perturbation $Q$ and the value $E$ as may be seen in the numerical simulations in \cite{bal2023asymmetric,bal2023mathbb}. The proof of this result is presented in section \ref{sec:current}.

\paragraph{Limiting absorption principle.}

Assume that the unperturbed operator $H_0$ satisfies [H1]. Under additional assumptions on $H_0$ and on the perturbation $Q$, we show that [H2] holds for a large class of problems. 

We assume that $H_0$ is a self-adjoint differential operator as described in [H1] above. The resolvent operator $R_0(z)=(H_0-z)^{-1}$ may then display different behaviors as $z$ approaches the real-axis with positive or negative imaginary parts. 
\begin{definition}\label{def:Jab}
For $a<b$, we define
\begin{align*}
    \rJ_+(a,b)&=\{\lambda\in\mathbb{C}\ |  \ a<\operatorname{Re}\lambda<b,\  0<\operatorname{Im} \lambda<1\},\\
    \rJ_-(a,b)&=\{\lambda\in\mathbb{C}\ |\  a<\operatorname{Re}\lambda<b, \ -1<\operatorname{Im} \lambda<0\}, \\
    \rJ(a,b)&=\rJ_+(a,b)\cup \rJ_-(a,b).
\end{align*}
\end{definition}
Our objective is to prove results on the spectrum of $H$ that will allow us to verify hypothesis [H2]. 
We recall that the spaces $L^2_s$ and $H^p_s$ are introduced in Definition \ref{def:LHs} and that $Z$ is the set of critical values of branches of spectrum of $H_0$ defined in \eqref{eq:Z}.  

We make the following assumptions, recalling that $\aver{x}=\sqrt{1+x^2}$:
\begin{hypothesis}\label{hyp:Q}
    We assume that $Q(x,y)$ is a $q\times q$ Hermitian matrix valued function. Moreover,  $|Q(x,y)|$ is bounded (measurable) and for some $h>1$ and $C=C(h)>0$,
        \begin{gather}\label{q}
|Q(x,y)| \leq C \aver{x}^{-h} ,\qquad (x,y)\in\Rm^2.
        \end{gather}
\end{hypothesis}
The above may be generalized to the short-range condition in Remark \ref{rem:sr}.
\begin{hypothesis}\label{hyp:H0H} Let $\Rm\ni a<b\in \Rm$. We assume the following a priori estimates.
\begin{itemize}
 \item[1.] Let $s>\frac12$. There is a constant $C=C(s,a,b)>0$ such that 
 \begin{gather}\label{eq:H_0}
    \| u \|_{H_{-s}^p}\leq C\|(H_0-\lambda)u\|_{L_s^2},
\end{gather}
for all complex numbers $\lambda\in \rJ(a,b)$ and $u\in H_s^p$.
\item[2.] Let $s>0$, $\epsilon>0$, and $(a,b)\cap Z=\emptyset$. There is a constant $C=C(s,a,b)>0$ such that 
\begin{gather}\label{eq:H0real}
    \| u \|_{H_{s-1-\epsilon}^p}\leq C\|(H_0-\lambda)u\|_{L_s^2},
\end{gather}
for all real numbers $\lambda\in (a,b)$ and $u\in H^p$.
%%%% NO LONGER NECESSARY
\iffalse
\item[3.]
Let  $[a,b]\cap Z=\emptyset$ and $[a,b]$ not containing any eigenvalue of $H$. Then for $h>s>\frac{1}{2}$, there exists a constant $C=C(s,a,b)$ such that
\begin{gather}\label{ineq:h}
    \|u\|_{H^p_{-s}}\leq C\|(H-\lambda)u\|_{L^2_s},
\end{gather}
for all $u\in H_{s}^p$ and $\lambda\in \rJ(a,b)$.
\fi %%%
%%%%%%%
\end{itemize}
\end{hypothesis}
Under these two hypotheses, we then have the following result:
\begin{theorem}\label{thm:lap}
  \noindent (i) The point spectrum of $H$ away from $Z$ is discrete.  The only possible limiting points of families of eigenvalues are in  $Z\cup \{\pm \infty\}$.
  \\
  \noindent (ii) [Principle of Limiting Absorption]. Let $a,b\in\Rm$ such that $[a,b]\cap Z=\emptyset$ and $[a,b]$ does not contain any eigenvalue of $H$. For $1<2s<h$, $f\in L_s^2$ and $\operatorname{Im}z\neq 0$, define 
\begin{gather*}
    u_z(f)=(H-z)^{-1}f.
\end{gather*}
Then for $\lambda\in (a,b)$, there exist $u^\pm(\lambda,f)$ such that
\begin{gather*}
    u_z(f)\to u^\pm(\lambda,f)\ \text{in}\ H^p_{-s}
\end{gather*}
as $z\to\lambda\pm 0i$, respectively. Moreover, $u^\pm(\lambda,f)$ are solutions of the equation 
\begin{gather*}
    (H-\lambda)u=f
\end{gather*}
and they are continuous functions of $\lambda$ in the topology of $H_{-s}^p$.
\\
\noindent (iii) $H$ does not have singular continuous spectrum.
\end{theorem}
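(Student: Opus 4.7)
The strategy is the classical Agmon--Kato--Kuroda scheme adapted to the weighted spaces furnished by Hypothesis~\ref{hyp:H0H}: factor $(H-z) = (H_0-z)(I + K(z))$ with $K(z) := R_0(z) Q$, show $K(z)$ is a norm-continuous family of compact operators up to the real axis, apply the analytic Fredholm theorem, and bootstrap the resulting generalized solutions into genuine $L^2$-eigenfunctions. Throughout I fix $s$ with $\tfrac12 < s < \tfrac{h}{2}$ (admissible since $h > 1$) and work on $H^p_{-s}$.

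First I would show that $K(z)$ is compact on $H^p_{-s}$ uniformly for $z \in \rJ(a,b)$. Multiplication by $Q$ is bounded from $H^p_{-s}$ into $L^2_s$, since $\aver{x}^{2s}|Qu|^2 \lesssim \aver{x}^{2s-2h}|u|^2 \leq \aver{x}^{-2s}|u|^2$ when $h \geq 2s$, and a Rellich-type argument (local $H^p$-precompactness combined with tail decay from $|Q|\lesssim \aver{x}^{-h}$) upgrades this to compactness. Composing with $R_0(z): L^2_s \to H^p_{-s}$ bounded via \eqref{eq:H_0} yields compactness of $K(z)$. To reach the real axis I would invoke \eqref{eq:H0real}, which supplies the crucial uniform bound with a positive-weight target; together with the spectral decomposition \eqref{eq:spectraldecH0} and dominated convergence, this produces boundary limits $R_0^\pm(\lambda): L^2_s \to H^p_{-s}$ depending continuously in operator norm on $\lambda \in (a,b) \setminus Z$, hence a norm-continuous compact family $K^\pm(\lambda)$.

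The analytic Fredholm theorem then delivers (i) and the bulk of (ii). Since $(I + K(z))^{-1}$ exists whenever $z \notin \sigma(H)$, the theorem extends it meromorphically to $\rJ_\pm(a,b) \cup ((a,b)\setminus Z)$, with a discrete singular set whose only accumulation points in $\Rm$ lie in $Z$. At each singular $\lambda \in (a,b)\setminus Z$ there is a nonzero $u \in H^p_{-s}$ with $(H-\lambda)u = 0$, and the main obstacle is to promote every such $u$ to an $L^2$ eigenfunction of $H$. This is a weight bootstrap: from $|Qu| \lesssim \aver{x}^{-h}|u|$ one has $u \in H^p_\sigma \Rightarrow Qu \in L^2_{\sigma+h}$, and \eqref{eq:H0real} applied with weight $\sigma+h$ yields $u \in H^p_{\sigma+h-1-\epsilon}$. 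Since $h > 1$, choosing $0 < \epsilon < h-1$ produces a strict gain at each step, so starting from $\sigma = -s$ one reaches $\sigma \geq 0$ in finitely many iterations, i.e., $u \in L^2$. Hence $\lambda \in \sigma_{pp}(H)$, which proves (i). Outside $Z \cup \sigma_{pp}(H)$ the operator $I + K^\pm(\lambda)$ is invertible, giving the formula $u^\pm(\lambda, f) = R_0^\pm(\lambda)(I + K^\pm(\lambda))^{-1} f$ together with its continuity in $\lambda$, which is exactly (ii).

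Part (iii) follows from (ii) via Stone's formula: for $f \in L^2_s$ the density of the spectral measure on $(a,b)\setminus(Z\cup \sigma_{pp}(H))$ equals $\tfrac{1}{\pi}\operatorname{Im}(u^+(\lambda,f),f)_{\mH}$, which is continuous in $\lambda$ by (ii), so the restricted spectral measure is absolutely continuous. Density of $L^2_s$ in $\mH$ together with the local finiteness of $Z$ rules out any singular continuous component. The hardest step in the whole argument is the weight bootstrap in the preceding paragraph, which is precisely where the strict inequality $h>1$ in Hypothesis~\ref{hyp:Q} and the positive-weight gain in \eqref{eq:H0real} combine; everything else amounts to a careful execution of the Agmon--Kato--Kuroda scheme in the weighted Sobolev scale $H^p_\sigma$ adapted to the translation-invariant geometry of $H_0$ along $x$.
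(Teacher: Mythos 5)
Your overall architecture is the right one and matches the paper's: factorize $(H-z)$ via $R_0(z)Q$, prove compactness of this family in $\mathcal{B}(H^p_{-s},H^p_{-s})$ using the decay of $Q$ and the weighted embedding, push the inverse up to the real axis by Fredholm theory, identify the singular set with eigenvalues, and obtain (iii) by Stone's formula. The compactness argument, the choice of weights $1<2s<h$, and the Stone-formula step are all consistent with what the paper does.

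There is, however, a genuine gap in the step you yourself flag as the hardest: the weight bootstrap promoting $u\in H^p_{-s}$ with $u=-R_0^\pm(\lambda)Qu$ to $u\in L^2$. You propose to iterate $u\in H^p_\sigma\Rightarrow Qu\in L^2_{\sigma+h}\Rightarrow u\in H^p_{\sigma+h-1-\epsilon}$ using \eqref{eq:H0real}, starting at $\sigma=-s<0$. But \eqref{eq:H0real} is an a priori estimate stated for $u\in H^p$ (unweighted, i.e., weight $0$); it cannot be applied to $u\in H^p_\sigma$ with $\sigma<0$. Indeed it is \emph{false} there: for $\lambda\notin Z$ the unperturbed plane waves $\psi_m(\cdot;\lambda)$ solve $(H_0-\lambda)\psi_m=0\in L^2_t$ for every $t$, yet they lie in $H^p_{-s}$ only for $s>1/2$ and gain no decay, so any estimate of the form $\|u\|_{H^p_{\sigma+h-1-\epsilon}}\leq C\|(H_0-\lambda)u\|_{L^2_{\sigma+h}}$ over all $u\in H^p_{-s}$ would be violated. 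The point is that at real energy the equation $(H_0-\lambda)u=f$ has non-decaying homogeneous solutions, and the crude bootstrap cannot distinguish the decaying outgoing solution from a combination involving them.

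The paper closes exactly this gap in Lemma~\ref{lmm:outgoing}, and the mechanism is not just more weight; it is the Hermitian symmetry of $Q$. Writing $f=-Qu$, one has $(u,Qu)\in\Rm$, while the Plemelj/Sokhotski boundary-value formula for $\langle R_0(z)f,f\rangle$ as $z\to\lambda\pm i0$ produces an imaginary contribution $\pm\pi\sum_{j\in M_0}|\hat f_j(\xi_j)|^2/\partial_\xi E_j(\xi_j)$ from the propagating channels. Reality forces $\hat f_j(\xi_j)=0$ for every $j\in M_0$, i.e., the source has no on-shell component. Only then can one invoke the decay result of Agmon/Yamada (\cite[Thm.~3.2]{ASNSP_1975_4_2_2_151_0}, \cite[Prop.~4.1]{yamada}), which says precisely that the inverse Fourier transform of $\hat f_j(\xi)/(E_j(\xi)-\lambda)$ belongs to $L^2_{s-1}$ when $\hat f_j(\xi_j)=0$, and this initializes the bootstrap legitimately. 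After the first step $u\in L^2_{s-1}$, iterating with the gain $h-1>0$ does push $u$ into $L^2_t$ for every $t$. Your proposal needs to insert this Plemelj/Hermitian argument before any iteration; as written, the bootstrap never gets started.

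A related cosmetic point: in your compactness argument the factorization should pass through a strictly lower-order weighted space (e.g.\ $H^p_{-s}\hookrightarrow H^{p-1}_{-s-\epsilon}$ compactly by Lemma~\ref{rmk:cpt}, then $Q:H^{p-1}_{-s-\epsilon}\to L^2_s$ using $2s+\epsilon\leq h$, then $R_0(z):L^2_s\to H^p_{-s}$), since Lemma~\ref{rmk:cpt} requires both the order and the weight index to drop. Your sketch ``local $H^p$-precompactness plus tail decay'' suggests you see this; make it explicit.
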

The proof of this theorem is presented in section \ref{sec:lap} following the theory of \cite{ASNSP_1975_4_2_2_151_0}; see also \cite{yamada}.

\paragraph{Eigenfunction expansion.} We are now in a position to justify [H2] under the hypotheses leading to Theorem \ref{thm:lap}, and in particular to construct generalized eigenfunctions $\psi_j^Q(\cdot;\xi)$ for $\Xi=(j,\xi)\in J\times \Rm$. We now know that the point spectrum $(\lambda_n)_n$ of $H$ is discrete away from $Z$. Let $\{\varphi_n\}$ be the at most countable set of orthonormal eigenfunctions of $H$. Define $Z_H=Z \cup \{(\lambda_n)_n\}$.  Let $E\in (a,b)$ with $[a,b]\cap Z_H=\emptyset$.

From Theorem \ref{thm:lap}, $R(z)=(H-z)^{-1}$ is well defined on $\mH$ and bounded uniformly for $z\in \rJ(a,b)$ in an appropriate topology as well as the limiting operators:
\[R^\pm(\lambda)=(H-(\lambda\pm i0))^{-1}.\]  

For $\fco=(j,\xi)\in J\times\Rm$, we defined $\psi_j(x,y;\xi)$ in \eqref{eq:unperturbedxi}. For $z\in \rJ(a,b)$, we introduce the function $A_\fco(x,y;z)$ defined by
\begin{equation}\label{eq:Az}
     A_\fco(z) =(I-R(z)Q)\psi_j(\xi).
\end{equation}
Associated is the following linear form defined for $f\in L^2_s$ with $s>\frac12$:
\begin{equation}\label{eq:Azstar}
     A^*_\fco f(z) =(f,A_\fco(z)) := \dint_{\Rm^2} f(x,y) \cdot \bar A_\fco(x,y;z) dxdy.
\end{equation}
We now define the {\em perturbed generalized eigenfunctions}
\begin{equation}\label{eq:perturbedxi}
  \psi_j^\pm(\xi) = A_\fco(E_j(\xi)\pm i0) = (I-R(E_j(\xi)\pm i0)Q)\psi_j(\xi).
\end{equation}
For concreteness, we define $\psi_j^Q(x,y;\xi)=\psi_j^+(x,y;\xi)$, the {\em outgoing} generalized eigenfunctions, while $\psi_j^-(x,y;\xi)$ corresponds to {\em incoming} generalized eigenfunctions.

Theorem \ref{thm:lap} implies that $\psi_j^\pm(\xi)\in H^p_{-s}(\Rm^2)$. For each $E\in(a,b)$, there is a finite number of wavenumbers $\xi_m$ such that $E=E_m(\xi_m)$. We denote by $\psi_m^\pm(E)$ the corresponding generalized eigenfunctions parametrized by $(m,E)$ rather than $(j,\xi)$.

 Let $f\in L_s^2$ and $(j,\xi)$ so that $E_j(\xi)\in (E_-,E_+)\backslash Z_H$. We define the generalized Fourier transform(s) by
 \begin{gather}\label{def:fourier}
    \tilde{f}^\pm(\fco)=A_\fco^* f (E_j(\xi)\pm 0i) = (f,A_\fco(E_j(\xi)\pm 0i)),
\end{gather}  
and for any $\lambda_n\in \Rm$ an eigenvalue of $H\varphi_n=\lambda_n\varphi_n$,
\begin{gather}\label{def:fourierdisc}
    \tilde{f}_n = (f,\varphi_n).
\end{gather}  
%We know that $\lambda_n$ is locally discrete away from $Z\cup \{\pm\infty\}$, where essential point spectrum may exist.%%% Already said
Then, we have the following result:
\begin{theorem}[Eigenfunction Expansion Theorem]\label{thm:EET}
For $f\in L_s^2\subset\mH$, we define $\tilde f$ as either one of the generalized Fourier transforms $\tilde f^\pm$ in \eqref{def:fourier}. Then we have the following Parseval relation:
\begin{gather}\label{eigenfunction_expansion}
    \|f\|_{L^2}=\sum_{n=0}^\infty |(f, \varphi_n)|^2+\int_\mathbb{R}\sum_{j\in J}|\tilde{f}(\Xi)|^2d\xi.
\end{gather}
\end{theorem}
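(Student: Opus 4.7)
The plan is to follow the classical stationary scattering framework of \cite{ASNSP_1975_4_2_2_151_0, yamada, yamada1975eigenfunction}, adapted to the interface setting. Interpreting the left-hand side of \eqref{eigenfunction_expansion} as $\|f\|_{L^2}^2$, I begin with the orthogonal decomposition $\mH=\mH_{pp}\oplus\mH_{ac}$: by Theorem~\ref{thm:lap}(iii) there is no singular continuous spectrum, and by (i) the eigenfunctions $\{\varphi_n\}$ form an orthonormal basis of $\mH_{pp}$, so the discrete sum in \eqref{eigenfunction_expansion} accounts exactly for $\|P_{pp}f\|^2$. The task reduces to proving
\begin{equation*}
  \|P_{ac}f\|^2 \;=\; \int_{\Rm}\sum_{j\in J}\bigl|\tilde f^\pm(\fco)\bigr|^2\,d\xi
\end{equation*}
for $f$ in the dense subspace $L^2_s$ with $\tfrac12<s<h/2$, and then extending to $\mH$ by density.

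The main tool is Stone's formula on intervals $\Delta=(a,b)$ with $\bar\Delta\cap Z_H=\emptyset$:
\begin{equation*}
  (E_H(\Delta)f,g)=\lim_{\epsilon\to 0^+}\frac{1}{2\pi i}\int_\Delta\bigl((R(\lambda+i\epsilon)-R(\lambda-i\epsilon))f,g\bigr)\,d\lambda,\qquad f,g\in L^2_s.
\end{equation*}
Theorem~\ref{thm:lap}(ii) ensures $R(\lambda\pm i\epsilon)f\to R^\pm(\lambda)f$ in $H^p_{-s}$ uniformly on $\Delta$, which licenses the interchange of limit and $\lambda$-integration. The key algebraic input is the intertwining identity $(H-z)A_\fco(z)=(E_j(\xi)-z)\psi_j(\xi)$ implicit in \eqref{eq:Az}, which rearranges to $R(z)\psi_j(\xi)=A_\fco(z)/(E_j(\xi)-z)$ for $\operatorname{Im}z\neq 0$. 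Combined with the unperturbed Parseval relation from [H1](iv), $f=\int\hat f(\fco)\psi_j(\xi)\,d\fco$, this gives the resolvent expansion
\begin{equation*}
  R(z)f=\int_{J\times\Rm}\hat f(\fco)\,\frac{A_\fco(z)}{E_j(\xi)-z}\,d\fco.
\end{equation*}
Pairing with $g$, applying this expansion for $R(\lambda+i\epsilon)$ and its conjugate (via $R(\bar z)=R(z)^*$) for $R(\lambda-i\epsilon)$ so that $A_\fco$ is evaluated at $\lambda+i0$ in both terms, and then invoking the Sokhotski--Plemelj formula inside the $d\fco$-integral, concentrates the singular part on the energy shell $\{E_j(\xi)=\lambda\}$. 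A change of variables $\lambda\mapsto \xi$ on each branch---valid because $\partial_\xi E_j(\xi)\neq 0$ on the relevant set whenever $\Delta\cap Z=\emptyset$---yields the density-of-states identity
\begin{equation*}
  (E_H(\Delta)f,g)\;=\;\int_{\{(j,\xi):\,E_j(\xi)\in\Delta\}}\tilde f^\pm(\fco)\,\overline{\tilde g^\pm(\fco)}\,d\fco
\end{equation*}
for both sign choices. Exhausting $\Rm\setminus Z_H$ by such intervals (permissible since $Z_H$ is locally finite away from $\pm\infty$ by [H1](iii) and Theorem~\ref{thm:lap}(i)), setting $g=f$, and extending from $L^2_s$ to $\mH$ by density completes the proof.

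The main obstacle is the rigorous execution of the Sokhotski--Plemelj step. Two delicate points arise: (a) interchanging the limit $\epsilon\to 0^+$ with the double $d\fco\,d\lambda$ integration requires a dominated-convergence argument, supplied by the LAP bound $\|A_\fco(z)\|_{H^p_{-s}}\le C$ uniform on compact subsets together with the integrability of $(1+|E_j(\xi)|^2)^{-1}$ from [H1](ii); and (b) the expansion of $R(z)$ uses the \emph{unperturbed} Fourier coefficient $\hat f$, producing cross-terms $\int \hat f\,\overline{\tilde g^\pm}\,d\fco$ that must be symmetrized to $\int \tilde f^\pm\,\overline{\tilde g^\pm}\,d\fco$. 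The reconciliation rests on the algebraic identity $\tilde f^\pm(\fco)=\hat f(\fco)-\widehat{QR^\mp(E_j(\xi))f}(\fco)$, combined with an exact cancellation between the Cauchy principal-value contributions coming from $R(\lambda+i\epsilon)$ and $R(\lambda-i\epsilon)$ in Stone's formula. This cancellation is ultimately an algebraic consequence of the self-adjointness of $H$ and the resolvent identity $R(z)-R_0(z)=-R(z)QR_0(z)$, and may be recast as unitarity of the on-shell transition operator intertwining $\hat f$ and $\tilde f^\pm$.
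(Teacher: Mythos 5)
Your overall framework (Stone's formula, LAP, Plemelj/Poisson-kernel concentration on the energy shell) matches the paper's, but at the decisive step you diverge from it, and the divergence opens a gap that the paper's route avoids entirely.

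You expand $R(z)f$ in the \emph{unperturbed} basis, $R(z)f=\int \hat f(\fco)\,A_\fco(z)/(E_j(\xi)-z)\,d\fco$, and pair with $g$; after applying Plemelj to $R(\lambda+i\eta)-R(\lambda-i\eta)$ you then obtain mixed products of the form $\hat f(\fco)\,\overline{A^*_\fco(z)g}$. You correctly flag that this gives $\int \hat f\,\overline{\tilde g^\pm}$ rather than the desired $\int \tilde f^\pm\,\overline{\tilde g^\pm}$, and propose to repair it via ``an exact cancellation between the Cauchy principal-value contributions'' and ``unitarity of the on-shell transition operator.'' That cancellation is asserted, not demonstrated, and it is not obvious how to carry it through; indeed the principal-value terms coming from the two resolvents involve $A^*_\fco$ evaluated at $\lambda\mp i\eta$ acting on $f$ and $g$ asymmetrically, and no identity in the paper makes this reconciliation automatic. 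This is a genuine gap in the argument as written.

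The paper sidesteps it by using the \emph{second resolvent identity} $R(z_1)-R(z_2)=(z_1-z_2)R(z_1)R(z_2)$, which turns Stone's formula (with $g=f$) into the manifestly nonnegative expression $\frac1\pi\int_\alpha^\beta\eta\,\|R(\mu+i\eta)f\|^2\,d\mu$. Since $R(\mu+i\eta)f\in L^2$, Parseval for the unperturbed transform $\mF$ applies, and Proposition~\ref{prop:(h-z)a} gives the clean identity $\widehat{R(z)f}(\fco)=A^*_\fco(\bar z)f/(E_j(\xi)-\bar z)$. The integrand then becomes the perfect square $\eta\,|A^*_\fco(\mu-i\eta)f|^2/|E_j(\xi)-\mu+i\eta|^2$, and the Poisson-kernel limit \eqref{eq:limits} concentrates it on the energy shell to yield $|\tilde f(\fco)|^2$ directly --- with no cross-terms at all, and with dominated convergence supplied by Lemma~\ref{lmm:a_star} and the integrability of $(1+|E_j(\xi)|^2)^{-1}$ from Hypothesis~[H1](ii). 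If you want to rescue your approach, you would need to actually prove the symmetrization you invoke, most likely by establishing an on-shell identity between $\hat f$, $\tilde f^+$, and $\tilde f^-$ and verifying the claimed cancellation of the off-shell principal-value pieces; but the squared-resolvent route is simpler and is what the paper (following \cite{ASNSP_1975_4_2_2_151_0, yamada1975eigenfunction}) uses. Finally, your handling of the point spectrum via $\mH_{pp}\oplus\mH_{ac}$ is fine and consistent with the paper's treatment of the second assertion.
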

This theorem is proved in section \ref{sec:eigexp}. It justifies [H2] as an application of the spectral theorem since the above Parseval relation shows the completeness of the decomposition \eqref{eq:spectraldecH}.

\paragraph{Application to Dirac operator.}  The main unperturbed operator of interest in this paper is the massive two-dimensional Dirac Hamiltonian
\begin{equation}\label{eq:DiracH0}
  H_0 = D_x\sigma_1+D_y\sigma_2 + m(y)\sigma_3, \qquad \hat H_0(\xi) = \xi \sigma_1+D_y\sigma_2 + m(y)\sigma_3,
\end{equation}
with $\sigma_{1,2,3}$ standard Pauli matrices,  $D_x=-i\partial_x$ and $D_y=-i\partial_y$, and $m(y)$ a domain wall, which for concreteness, equals $y$ up to a bounded perturbation. Then, $p=1$ and $q=2$. That the spectral decomposition \eqref{eq:spectraldecH0} and all assumptions in Hypothesis [H1] applies to $H_0$ will be revisited in section \ref{sec:decH0}; see also \cite{bal2023asymmetric}.

A second natural application of the theory developed here is for the Klein-Gordon operator
\begin{equation}\label{eq:KGH0}
  H_0 = D_x^2 + \fa^*\fa ,\qquad \fa=\partial_y + m(y),\quad \fa^*=-\partial_y+m(y)
\end{equation}
with then $p=2$ and $q=1$. This operator is topologically trivial in the sense that $\sigma_I[H_0+Q]=0$ for $Q$ short-range \cite{bal2023topological}. Up to a constant shift, this class of operators includes $H_0=-\partial^2_x-\partial^2_y+y^2$ mentioned in the introduction.
The verification of the main hypotheses [H1]-[H2]-[H3] for that operator is carried out exactly as for the Dirac operator and will not be explored further in this paper. 

We assume that the range of $m$ is infinite, and for concreteness that:
\begin{hypothesis}\label{hyp:rangem}
We assume $m(y)-y$ is a bounded function. 
\end{hypothesis}

Under this hypothesis, the operators $H$ and $H_0$ are unbounded elliptic self-adjoint operators on the Hilbert space $L^2(\Rm^2;\Cm^2)$ with domains of definition $\mD(H)=\mD(H_0)$ the subspace of functions $(\psi_1,\psi_2)^t \in L^2(\Rm^2;\Cm^2)$  such that $\nabla \psi_j\in L^2(\Rm^2;\Cm^2) $ and $y\psi_j\in L^2(\Rm^2)$. This was denoted by the space $H^1$ with $s=0$ in \eqref{eq:Hsp}. We assume here that $Q$ decays sufficiently rapidly in $x$ as described in Hypothesis \ref{hyp:Q} for the above result to hold \cite{kato2013perturbation,RS}.  We then have the following result:
\begin{theorem} \label{thm:hypDirac}
 (i) Hypothesis [H1] holds for the unperturbed operator $H_0$; see \eqref{eq:spectralH0} and \eqref{eq:phimE}. 
 \\ \noindent (ii) Estimates (1.) and (2.) in Hypothesis \ref{hyp:H0H} hold for the Dirac operator $H_0$.
%%% NO LONGER NECESSARY
\iffalse
 \\ \noindent (iii) Estimate (3.) in Hypothesis \ref{hyp:H0H} hold for the Dirac operator Hypothesis \ref{hyp:Q} holds.
 \fi %%%
 %%%%% 
  \\ \noindent (iii) Hypothesis [H3] holds for the Dirac operator $H=H_0+Q$; see \eqref{eq:H3Dirac}. 
\end{theorem}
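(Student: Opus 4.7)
The strategy is to treat the three parts in turn, using the exactly solvable model $m(y)=y$ as a starting point and then handling the bounded deformation $m(y)-y$ via analytic/Kato perturbation theory.

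\textbf{Part (i).} The starting observation is the Pauli identity
\begin{equation*}
\hat H_0(\xi)^2 = \xi^2 + D_y^2 + m(y)^2 + \sigma_1 m'(y),
\end{equation*}
which, under Hypothesis \ref{hyp:rangem}, is bounded below by $\xi^2 + D_y^2 + c\, y^2 - C$ in the quadratic-form sense. This harmonic-oscillator lower bound gives essential self-adjointness of $\hat H_0(\xi)$ on $C_c^\infty$, compactness of its resolvent, and the lower bound $|E_j(\xi)| \gtrsim \sqrt{\xi^2 - C}$, hence integrability of $(1+|E_j(\xi)|^2)^{-1}$ and finiteness of the number of branches crossing any bounded interval $[a,b]$. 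Fourier transform in $x$ then yields the generalized eigenfunctions of the form \eqref{eq:unperturbedxi}, and completeness of $\{\phi_j(\cdot;\xi)\}_j$ in $L^2(\Rm_y)\otimes\Cm^2$ (again a consequence of the compact resolvent) gives the expansion property (iv). Smoothness of $\xi\mapsto (E_j(\xi),\Pi_j(\xi))$ and local finiteness of the critical set $Z$ follow from Kato's analytic perturbation theory together with the explicit model spectrum (edge branch $E_0(\xi)=\xi$ and Landau-type branches $E_k^\pm(\xi)=\pm\sqrt{\xi^2+2k}$, $k\geq 1$), whose critical values $\{\pm\sqrt{2k}\}$ are discrete; bounded deformations of $m$ shift these continuously without creating accumulation points in bounded intervals.

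\textbf{Part (ii).} The two weighted a priori estimates are Agmon--Kato--Kuroda type bounds that I would establish by Fourier-transforming in $x$. In the Fourier variable $\xi$, the operator $H_0-\lambda$ acts as the family $\hat H_0(\xi)-\lambda$, and the weight $\aver{x}^s$ corresponds, up to equivalent norms, to $\aver{D_\xi}^s$. Away from values where $E_j(\xi)=\operatorname{Re}\lambda$, the resolvent $(\hat H_0(\xi)-\lambda)^{-1}$ is uniformly bounded and the estimate is immediate. Near each $\xi_0$ with $E_j(\xi_0)=\operatorname{Re}\lambda\in(a,b)$, the condition $[a,b]\cap Z=\emptyset$ guarantees $\partial_\xi E_j(\xi_0)\neq 0$, so a local change of variables $E=E_j(\xi)$ reduces the problem to the classical one-dimensional limiting absorption principle $\|(D_t-\lambda)^{-1}v\|_{H^0_{-s}}\leq C\|v\|_{L^2_s}$ for $s>\tfrac12$ (and its sharper weighted version for real $\lambda$ giving a $\aver{t}^{s-1-\epsilon}$ gain). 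Summing over the finitely many branches that pass through $[a,b]$ yields estimates (1) and (2).

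\textbf{Part (iii).} Writing $\psi_m^{Q,\pm}=\psi_m - R^{\pm}(E)Q\psi_m$, the assertion reduces to showing that $R^{\pm}(E)(Q\psi_m)$ is asymptotically a superposition of outgoing (resp.\ incoming) plane waves $\psi_n$ as $x\to\pm\infty$. Since $Q\psi_m\in L^2_s$ by Hypothesis \ref{hyp:Q}, the eigenfunction expansion of $H_0$ (Theorem \ref{thm:EET} applied to the unperturbed problem) represents $R^\pm(E)(Q\psi_m)$ as a Fourier-type integral whose singular part is concentrated on the shells $\{\xi:E_j(\xi)=E\}$. The large-$|x|$ asymptotics are then extracted by deforming the $\xi$-contour into the upper or lower half-plane according to $\operatorname{sgn}(x)$; each simple zero of $E_j(\xi)-E$ contributes a single plane wave whose group velocity $\partial_\xi E_j$ carries the sign correlated with $\operatorname{sgn}(x)$, producing exactly the expansion \eqref{eq:approxscattering}. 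The non-pole contributions decay in $x$ uniformly as $L^2$-functions of $y$.

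\textbf{Main obstacle.} The hardest step is the asymptotic analysis in Part (iii): the resolvent kernel of $H_0$ has an unconventional off-diagonal structure due to the confining mass $m(y)$ in the transverse direction, so the contour deformation must be carried out together with uniform (in $y$) control of the transverse profiles $\phi_j(y;\xi)$ and their analytic continuations in $\xi$. The treatment of \cite{yamada,yamada1975eigenfunction} for constant-coefficient Dirac operators supplies the template, adapted here to the domain-wall setting by exploiting the decay and smoothness of $\phi_j(y;\xi)$ inherited from the harmonic-oscillator confinement.
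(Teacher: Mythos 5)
Your outline captures the broad structure but misses several structural devices the paper exploits, and in two places has genuine gaps.

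\textbf{Part (i).} Your Pauli identity $\hat H_0(\xi)^2=\xi^2+D_y^2+m^2+\sigma_1 m'$ is correct and equivalent to the paper's $\hat H_0(\xi)^2=\xi^2+{\rm Diag}(\fa\fa^*,\fa^*\fa)$ with $\fa=\partial_y+m$, but you then argue for discreteness of $Z$ by Kato perturbation of the exactly solvable $m=y$ model. This is weaker and shakier than what the paper actually does: because the supersymmetric pair $\fa^*\fa$, $\fa\fa^*$ has an exact spectrum $\{\rho_n\}$ for \emph{any} $m$ with unbounded range, the branches are \emph{exactly} $E_n(\xi)=\pm\sqrt{\xi^2+\rho_n}$ for the perturbed $m$ as well, so $Z_D=\{\pm\sqrt{\rho_n}\}$ is manifestly discrete with no need to track how critical points move under deformation. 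Your claim that bounded deformations of $m$ ``shift these continuously without creating accumulation points'' is the kind of thing the paper's structural argument makes trivial; as stated it needs justification.

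\textbf{Part (ii).} Two issues. First, decomposing in transverse eigenvectors of $\hat H_0(\xi)$ does not commute with the weight $\aver{D_\xi}^s$ because the transverse profiles $\phi_j(y;\xi)$ are $\xi$-dependent; ``a local change of variables $E=E_j(\xi)$'' leaves this commutation problem unaddressed. The paper sidesteps it by passing to $H_0^2-\lambda^2$, whose transverse eigenbasis $\{\nu_n\},\{\mu_n\}$ is $\xi$-\emph{independent}, so the problem decouples exactly into 1D problems $(D_x^2+\rho_n-\lambda^2)v_{1n}=g_{1n}$ (Proposition \ref{prop:dirac}); the real-$\lambda$ estimate uses instead the $\xi$-independent basis $\phi_m(y;E)$ (Proposition \ref{prop:dirac_real}). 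Second, you write ``summing over the finitely many branches that pass through $[a,b]$,'' but infinitely many non-resonant branches contribute to a general $u$, and one needs the 1D constants to be uniform in $n$. The paper proves this separately, showing $\|(D_x^2+\rho_n-\lambda^2)v_{1n}\|_{L^2_s}\geq\|(D_x^2-\lambda^2)v_{1n}\|_{L^2_s}$ for $\rho_n$ large; your sketch does not address uniformity.

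\textbf{Part (iii).} Here you genuinely diverge. You propose writing $\psi_m^Q=\psi_m-R^\pm(E)Q\psi_m$ and extracting asymptotics by contour deformation of the $\xi$-integral, which (beyond the circularity of invoking Theorem \ref{thm:EET} for a step that is logically prior) requires analytic continuation of the $\xi$-dependent transverse profiles $\phi_j(y;\xi)$ into the complex $\xi$-plane with uniform control in $y$ --- exactly the hard point you flag. The paper avoids all of this by a more elementary ODE argument: expand $\psi_m^Q(x,y)=\sum_q A_q(x)\phi_q(y;E)$ in the $\xi$-independent basis at fixed energy, pair conjugate modes $q,p=(\mp\epsilon_q,n)$, and obtain first-order scalar ODEs $B_q'(x)=ie^{-i\xi_q x}[\tfrac{E}{\xi_q}a_q+(1+\tfrac{E}{\xi_q})a_p]$ whose right-hand sides are integrable because $Q\psi_m^Q\in L^2_t$ with $t>\tfrac12$. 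Propagating modes then have limits $\alpha_{mq}^\pm$ as $x\to\pm\infty$ and evanescent modes decay, directly giving \eqref{eq:H3Dirac} without analytic continuation. Your route is plausible but substantially more technical; the ODE route is both simpler and what actually appears in the paper.
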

%%% \gb{Warning: (iii) above used to be (iv)}
The proof of this theorem is presented in section \ref{sec:Dirac}.

\begin{figure}[ht!] \centering
\includegraphics[width = 0.4\linewidth]{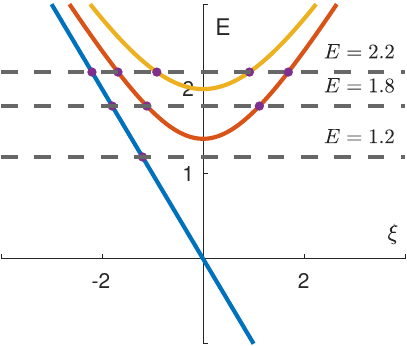}
		\caption{Branches of (absolutely continuous) spectrum of $H_0$ in \eqref{eq:DiracH0} when $m(y)=y$. When $E=2.2$, then $n_+(E)=2$, $n_-(E)=3$, while $\rME=5$. When $E=1.2$, then $n_+(E)=0$ while $\rME=n_-(E)=1$. For all choices of $[E_-,E_+]$, we have $2\pi\sigma_I[H_0]=-1$; see \cite{2, 3,quinn2021approximations}.}
    \label{fig:1}
\end{figure}

The quantization of the interface current observable $\sigma_I$ for Dirac, Klein Gordon, and more general elliptic operators with unbounded domain walls is carried out in \cite{bal2023topological}; see also \cite{3,quinn2021approximations} for the case of bounded domain walls. In this paper, we obtain the quantization of $\sigma_I$ from knowledge of the spectral decompositions of $H$ and $H_0$; see also \cite{2,3} for related spectral flow calculations. An illustration of the spectral decomposition of $H_0$ for the Dirac operator with $m(y)=y$ is given in Fig. \ref{fig:1}.

%
%%
%%%
\section{Current conservation and interface observable}\label{sec:current}
%%%
%

This section is devoted to the proof of Theorem \ref{thm:scatmatrix}. We recall that we assume hypotheses [H1]-[H2]-[H3] to hold. Let $\psi_m$ and $\psi_n$ two generalized eigenfunctions in $H^p_{-s}$ for $s>\frac12$, solutions of
\[
  H\psi_m = E_m \psi_m,\quad H\psi_n = E_n \psi_n
\]
with $E_n$ and $E_m$ in $\Rm$. Define the current correlation
\begin{equation}\label{eq:currentcorrelation}
  J_{mn}(x_0) = (\psi_n, 2\pi i[H,P(\cdot-x_0)] \psi_m ). 
\end{equation}
Here, $(\cdot,\cdot)$ formally denotes the inner product on $\mH$. While $\psi_n\not\in \mH$, the above integral is well-defined since $[H,P(\cdot-x_0)]$ is a differential operator with coefficients that vanish for $x-x_0$ outside of a compact set and hence mapping $H^p_{-s}$ to $L^2_t$ for any $t\in\Rm$. We recall that $P$ is a switch function in $\fS[0,1]$. On that compact set in the $x-$variable,  both $H\psi_m$ and $\psi_n$ are square integrable.

While the edge transport described by the continuous spectrum of $H$ is one-dimensional, the physical setting is still genuinely two-dimensional. We thus need a notion of conserved current along the $x-$axis playing a similar role to that of, say, Wronskians in the analysis of one dimensional Sturm-Liouville operators \cite{teschl2014mathematical}. The above current correlation plays such a role.
\begin{lemma}\label{lem:currentconservation}
 When $E_m=E_n$, we have the current conservation
 \[
   J'_{mn}(x_0) = 0 \quad \mbox{ for all } \quad x_0\in\Rm.
 \]
\end{lemma}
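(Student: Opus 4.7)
\proof[Proof plan for Lemma \ref{lem:currentconservation}]

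\textbf{Strategy.} The plan is to differentiate $J_{mn}(x_0)$ directly in $x_0$, reduce the derivative to a bilinear form involving a compactly supported cutoff, and then exploit the self-adjointness of $H$ together with the eigenvalue equations to obtain a factor of $E_m - E_n$ which vanishes by hypothesis.

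\textbf{Step 1: differentiating in $x_0$.} Since $P\in\fS[0,1]$ is smooth and $P'(\cdot - x_0)$ has compact support (in fact $P'$ is supported in the bounded interval where $P$ transitions from $0$ to $1$), the map $x_0\mapsto P(\cdot-x_0)$ is differentiable with derivative $-P'(\cdot-x_0)$. Moreover, $[H,P(\cdot-x_0)]$ is a differential operator of order at most $p-1$ whose coefficients involve only derivatives of $P(\cdot-x_0)$, hence are compactly supported in $x$. Differentiating under the integral defining \eqref{eq:currentcorrelation} yields
\begin{equation*}
  J'_{mn}(x_0) = -\bigl(\psi_n,\, 2\pi i\, [H,P'(\cdot-x_0)]\,\psi_m\bigr).
\end{equation*}
The right-hand side is well-defined because $P'(\cdot-x_0)$ and $[H,P'(\cdot-x_0)]$ have coefficients supported in a compact $x$-strip, on which both $\psi_m$ and $\psi_n$ (elements of $H^p_{-s}$) restrict to $L^2$ functions.

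\textbf{Step 2: expanding the commutator.} Let $\chi(x) = P'(x-x_0)$, so that $\chi$ is a compactly supported smooth function. Then $\chi\,\psi_m$ is a compactly supported element of $H^p\subset \mathcal D(H)$ (locally, in the $x$ direction; $y$-decay is not needed because of the compact $x$-support once we apply Hypothesis [H1] to ensure enough regularity in $y$). We write
\begin{equation*}
  (\psi_n, [H,\chi]\psi_m) = (\psi_n, H(\chi\psi_m)) - (\psi_n, \chi H\psi_m).
\end{equation*}
The second term equals $E_m\,(\psi_n,\chi\psi_m)$ directly from $H\psi_m=E_m\psi_m$. For the first term, since $\chi\psi_m$ is compactly supported (in $x$) and sufficiently regular, we may use the formal self-adjointness of the differential operator $H$ to integrate by parts: the boundary terms vanish because $\chi\psi_m$ vanishes at infinity in $x$. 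This gives $(\psi_n, H(\chi\psi_m)) = (H\psi_n, \chi\psi_m) = \overline{E_n}\,(\psi_n,\chi\psi_m) = E_n\,(\psi_n,\chi\psi_m)$ since $E_n\in\Rm$.

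\textbf{Step 3: conclusion.} Combining the two identities gives
\begin{equation*}
  (\psi_n, [H,\chi]\psi_m) = (E_n-E_m)\,(\psi_n,\chi\psi_m),
\end{equation*}
and hence $J'_{mn}(x_0) = -2\pi i(E_n-E_m)(\psi_n, P'(\cdot-x_0)\psi_m)$. Under the assumption $E_m=E_n$, the right-hand side vanishes identically in $x_0$, which is the claimed current conservation.

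\textbf{Main obstacle.} The only delicate point is the integration by parts in Step 2, since the generalized eigenfunctions $\psi_m,\psi_n$ are not globally in $L^2$. This is handled by the compact support (in the $x$ variable) of the cutoff $\chi=P'(\cdot-x_0)$: once $\chi$ is inserted, all pairings reduce to integrals over a bounded $x$-strip, on which both $\psi_m$ and $H\psi_m=E_m\psi_m$ are square-integrable by Hypothesis [H1], so the standard Green's identity for the formally self-adjoint operator $H$ applies without boundary contributions.

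\endproof
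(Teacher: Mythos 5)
Your proof is correct and follows essentially the same approach as the paper: differentiate $J_{mn}$ in $x_0$ to replace $P$ by the compactly supported $P'$, expand the commutator, move $H$ across the inner product using self-adjointness (justified by the compact $x$-support of $P'(\cdot-x_0)$), and pull out the factor $E_n-E_m$. You have simply spelled out the integration-by-parts justification that the paper leaves implicit.
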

\begin{proof}
   Using that $P'(\cdot-x_0)$ (unlike $P(\cdot-x_0)$) is compactly supported, we obtain that 
   \begin{align*}
     &-J_{mn}'(x_0) = (\psi_n, 2\pi i[H,P'(\cdot-x_0)] \psi_m ) = (\psi_n, 2\pi i (HP'(\cdot-x_0)-P'(\cdot-x_0) H)\psi_m) \\
     = \ & (H\psi_n,  2\pi i P'(\cdot-x_0)\psi_m) - (\psi_n, 2\pi i P'(\cdot-x_0) H\psi_m)  = (E_n-E_m) (\psi_n, 2\pi i P'(\cdot-x_0) \psi_m),
   \end{align*}
   which vanishes.
\end{proof}

Consider $H=H_0+Q$ with $Q$ rapidly decaying at infinity as described in the preceding section. For a fixed energy $E\in \Rm$, the unperturbed solutions of $(H_0-E)\psi=0$ in $H^p_{-s}$ are given by $\psi_m(x,y)$ for $1\leq m\leq \rME$ in \eqref{eq:unperturbedE} while the corresponding perturbed solutions are given by $\psi^Q_m(x,y)$.  The number of propagating modes $\rME$ equals $n_++n_-$ where $n_\pm$ corresponds to the number of currents $\pm J_m>0$ associated to each unperturbed plane wave and defined in \eqref{eq:currents}. From assumption \eqref{eq:approxscattering}, we deduce that
\begin{equation}\label{eq:psimV}
  (\psi_m^Q , 2\pi i[H,P(\cdot-x_0)] \psi_n^Q ) \approx \dsum_{1\leq p,q \leq \rME} \alpha^\pm_{mp}\bar\alpha^\pm_{nq} ( e^{i \xi_p x}\phi_p,  i[H,P(\cdot-x_0)] e^{i \xi_q x}\phi_q)
\end{equation}
where $\approx$ here is the same sense as above but now as $x_0\to\pm\infty$. All we need in the sequel is in fact that \eqref{eq:psimV} holds rather than the slightly more constraining \eqref{eq:approxscattering}. All terms in \eqref{eq:psimV} are again clearly defined since $[H,P(\cdot-x_0)]$ is compactly supported in the $x-$vicinity of $x_0$. We wish to estimate the above right-hand side.
\begin{lemma}\label{lem:unperturbedcurrent}
  Let $P$ be a switch function in $\fS(0,1)$. 
  %such that $\hat P(\xi_m-\xi_n)\not=0$ (with $\hat P$ the Fourier transform of $P-\frac12$) for all $m,n\in M$. Such a property holds for $P$ a Heaviside function.
  %heaviside function $P(x)=\theta(x-x_0)$ 
  Then we have 
  \begin{equation}
    ( e^{i \xi_m x}\phi_m, i[H,P] e^{i \xi_n x}\phi_n) = \delta_{mn} \partial_\xi E_n(\xi_n) = \delta_{mn} J_n.
  \end{equation}
\end{lemma}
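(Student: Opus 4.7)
\emph{Proof plan.} Since $Q(x,y)$ acts by matrix multiplication and $P(x)$ by scalar multiplication they commute, so $[H,P]=[H_0,P]$, reducing the claim to the unperturbed translation-invariant operator. The plane waves $\psi_m=e^{i\xi_mx}\phi_m$ and $\psi_n=e^{i\xi_nx}\phi_n$ lie in $H^p_{-s}$ but not in $L^2(\Rm^2)$, so formal self-adjointness cannot be used directly. The strategy is to introduce a compactly supported cutoff, apply the eigenvalue equation where legal, and identify the resulting boundary term.

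Let $\chi_R(x)=\chi(x/R)$ with $\chi\in C_c^\infty(\Rm)$, $\chi\equiv1$ on $[-1,1]$ and $\mathrm{supp}\,\chi\subset[-2,2]$. For $R$ larger than $\max(|c|,|d|)$ where $\mathrm{supp}(P')\subset[c,d]$, the product $P\chi_R$ equals $P$ on $[-R,R]$, vanishes on $[-2R,-R]$ (where $P\equiv0$), and coincides with $\chi_R$ on $[R,2R]$ (where $P\equiv1$). Since $P\chi_R$ is compactly supported in $x$, $P\chi_R\psi_n\in L^2$, and using $H_0\psi_{m,n}=E\psi_{m,n}$ with integration by parts (no boundary terms) gives
\[
(\psi_m,i[H_0,P\chi_R]\psi_n)=i(E-E)(\psi_m,P\chi_R\psi_n)=0.
\]
Because $[H_0,P\chi_R]$ is a differential operator whose local action depends only on $P\chi_R$ and its derivatives, it coincides with $[H_0,P]$ on the interior interval $[c,d]$ and with $[H_0,\chi_R]$ on the right-boundary interval $[R,2R]$. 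Hence, for all $R$ large enough,
\[
(\psi_m,i[H_0,P]\psi_n)=-J_R,\qquad J_R:=\int_{R\le x\le 2R}\bar\psi_m\, i[H_0,\chi_R]\psi_n\,dx\,dy,
\]
and it suffices to compute $\lim_{R\to\infty}J_R$.

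To evaluate the limit I would use the partial Fourier transform in $x$: since $\hat H_0(\xi)$ is polynomial in $\xi$ of degree $p$,
\[
H_0(\chi_R e^{i\xi_n x}\phi_n)(x,y)=\frac{1}{\sqrt{2\pi}}\int e^{i\xi x}\hat\chi_R(\xi-\xi_n)\hat H_0(\xi)\phi_n(y)\,d\xi,
\]
and Taylor expanding $\hat H_0(\xi)-\hat H_0(\xi_n)=(\xi-\xi_n)\partial_\xi\hat H_0(\xi_n)+\sum_{k\ge 2}\tfrac{1}{k!}(\xi-\xi_n)^k\partial_\xi^k\hat H_0(\xi_n)$ yields
\[
i[H_0,\chi_R]\psi_n(x,y)=e^{i\xi_n x}\chi_R'(x)\,\partial_\xi\hat H_0(\xi_n)\phi_n(y)+\mathrm{HOT},
\]
where the higher-order terms are proportional to $\chi_R^{(k)}$ for $k\ge 2$. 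Pairing with $\bar\psi_m$,
\[
J_R=\langle\phi_m,\partial_\xi\hat H_0(\xi_n)\phi_n\rangle\int_R^{2R}e^{i(\xi_n-\xi_m)x}\chi_R'(x)\,dx+\mathrm{HOT}.
\]
For $m=n$, $\xi_m=\xi_n$ and the integral equals $\chi_R(2R)-\chi_R(R)=-1$; the Feynman--Hellmann identity gives $\langle\phi_n,\partial_\xi\hat H_0(\xi_n)\phi_n\rangle=\partial_\xi E_n(\xi_n)=J_n$; and $\int_R^{2R}\chi_R^{(k)}\,dx=0$ for $k\ge 2$ kills the HOT. Hence $J_R\to-J_n$, so $(\psi_m,i[H_0,P]\psi_m)=J_m$. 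For $m\ne n$, $\xi_m\ne\xi_n$ and rescaling $s=x/R$ turns the integral into $\int_1^2 e^{i(\xi_n-\xi_m)Rs}\chi'(s)\,ds\to 0$ by the Riemann--Lebesgue lemma; the HOT vanish by the same oscillation argument.

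The main obstacle is bookkeeping when $H_0$ has order $p\ge 2$: verifying that $[H_0,P\chi_R]$ splits cleanly into the interior and right-boundary contributions above, and that the Taylor remainders are negligible as $R\to\infty$. Both steps are tractable because $\hat H_0(\xi)$ is polynomial in $\xi$, giving a finite Taylor expansion in which each term either integrates to zero (diagonal case) or oscillates away (off-diagonal case).
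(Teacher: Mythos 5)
Your approach is genuinely different from the paper's. The paper computes $(\psi_m,i[H_0,P]\psi_n)$ directly from the translation-invariant Schwartz kernel of $[H_0,P]$, landing on $\hat P(\xi_n-\xi_m)\,(E_n(\xi_n)-E_m(\xi_m))\,(\phi_n,\phi_m)=0$ when $\xi_m\ne\xi_n$ (with $\hat P$ the Fourier transform of $P-\tfrac12$), and on $(\phi_n,\partial_\xi\hat H_0(\xi_m)\phi_m)$ when $\xi_m=\xi_n$. You instead cut off with $\chi_R$ so that $P\chi_R\psi_n$ has compact $x$-support, use formal self-adjointness and $H_0\psi_{m,n}=E\psi_{m,n}$ to get $(\psi_m,i[H_0,P\chi_R]\psi_n)=0$, split the commutator by support into the interior piece $[H_0,P]$ on $[c,d]$ and the boundary piece $[H_0,\chi_R]$ on $[R,2R]$, and read off the interior pairing as a limiting boundary flux. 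This realizes the current-conservation heuristic of Lemma~\ref{lem:currentconservation} quite literally by moving the measurement to $x\approx R$, and the order-$p$ bookkeeping you worry about is in fact benign: $\hat H_0(\xi)$ is a polynomial in $\xi$, so the Taylor series terminates, and each $k\ge2$ term either integrates to exactly zero (since $\int_R^{2R}\chi_R^{(k)}\,dx=\chi_R^{(k-1)}(2R)-\chi_R^{(k-1)}(R)=0$ by flatness of $\chi$ at $1$ and $2$) or is $O(R^{1-k})$ after rescaling.

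There is, however, one genuine gap. You split into $m=n$ and $m\ne n$, and in the latter you silently assume $\xi_m\ne\xi_n$ so that Riemann--Lebesgue kills $\int_1^2 e^{i(\xi_n-\xi_m)Rs}\chi'(s)\,ds$. Hypothesis~\ref{hyp:H1} does not exclude the degenerate possibility $m\ne n$ with $\xi_m=\xi_n$, i.e., two branches $E_j$ and $E_k$ crossing at the same point $(\xi,E)$ with $E\notin Z$. In that case the oscillating phase disappears, your first-order boundary term evaluates to $(\phi_m,\partial_\xi\hat H_0(\xi_n)\phi_n)$, and you still owe a proof that this vanishes. The fix is the same Feynman--Hellmann step you invoke on the diagonal: differentiate $\hat H_0(\xi)\phi_n(\xi)=E_n(\xi)\phi_n(\xi)$ in $\xi$, pair with $\phi_m$, and use that $\phi_m,\phi_n$ are eigenvectors of $\hat H_0(\xi_n)$ at the same eigenvalue $E$ (hence may be chosen orthonormal) to obtain $(\phi_m,\partial_\xi\hat H_0\phi_n)=\partial_\xi E_n\,(\phi_m,\phi_n)=0$. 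The paper's proof treats exactly this case; your plan should add the same line.
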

\begin{proof}
  For an operator $A$, we denote by $A(x,x',y,y')$ its Schwartz kernel. We use the notation $A(z,y,y')$ for $z=x-x'$ when the operator invariant by translation in $x$.
 Let us first assume $\xi_m\not=\xi_n$. Then
\begin{align*}
  &(  i[H,P] e^{i \xi_m x}\phi_m , e^{i \xi_n x}\phi_n )  = \dint e^{-i\xi_n x} \phi_n^*(y) ( i[H_0,P])(x,x',y,y') e^{i\xi_m x'}\phi_m(y') dxdy dx' dy' \\
  =& \dint e^{-i\xi_n z} \phi_n^*(y)  iH_0(z,y,y') \big( (P(x')-P(x'+z)) e^{i(\xi_m-\xi_n) x'} \big) \phi_m(y') dz dx' dy dy' \\
  =&  \hat P(\xi_n-\xi_m) \dint  \phi_n^*(y)  (e^{-i\xi_n z} - e^{-i\xi_m z})  iH_0(z,y,y')\phi_m(y') dz dy dy' \\
  =& \hat P(\xi_n-\xi_m) \dint  \phi_n^*(y) (\hat H_0(\xi_n,y,y')-\hat H_0(\xi_m,y,y'))\phi_m(y') dy dy' \\
  =& \hat P(\xi_n-\xi_m) (E_n(\xi_n)-E_m(\xi_m)) (\phi_n,\phi_m) = 0
\end{align*}
since $E=E_n(\xi_n)=E_m(\xi_m)$ while $\hat P$, the Fourier transform of $P-\frac12$, is bounded for $\xi_n\not=\xi_m$ (decomposing $P$ as a Heaviside function plus an integrable function while $\hat P$ would equal $(\xi_m-\xi_n)^{-1}$ for $P$ the Heaviside function). Note that we may not have (and do not have in practice) $(\phi_n,\phi_m)=0$ for $n\not=m$ since $\xi_n\not=\xi_m$ for a fixed value of $E$ while the eigenfunctions $\phi_n$ are orthogonal for different values of $E_m$ at a fixed value of $\xi$.

When $\xi_n=\xi_m$, we find instead
\begin{align*}
  &(    i[H,P] e^{i \xi_m x}\phi_m, e^{i \xi_m x}\phi_n)  = \dint e^{-i\xi_m x}\phi_n^*(y) ( i[H_0,P])(x,x',y,y') e^{i\xi_m x'}\phi_m(y') dxdy dx' dy' \\
  =& \dint e^{-i\xi_m z} \phi_n^*(y)   iH_0(z,y,y') \big( P(x')-P(x'+z)  \big) \phi_m(y') dz dx' dy dy' \\
  =& \dint  \phi_n^*(y)  e^{-i\xi_m z} (-z)   iH_0(z,y,y')\phi_m(y') dz dy dy' \\
  =& \dint  \phi_n^*(y) \partial_{\xi}\hat H_0(\xi_m,y,y') \phi_m(y') dy dy' = (\phi_n,\partial_{\xi}\hat H_0(\xi_m)\phi_m) .
\end{align*}
The modes $\phi_m(\xi)$ satisfy
\[
  \hat H_0(\xi) \phi_m(\xi) = E_m(\xi) \phi_m(\xi).
\]
Since the spectral branches $\xi\mapsto E_m(\xi)$ are assumed sufficiently smooth, this yields
\[
  \partial_\xi \hat H_0 \phi_m + \hat H_0 \partial_\xi \phi_m = \partial_\xi E_m \phi_m + E_m \partial_\xi \phi_m
\]
from which we deduce 
\[
  (\phi_n,\partial_\xi \hat H_0 \phi_m) = \partial_\xi E_m (\phi_n,\phi_m)
\] 
for any $\phi_n$ such that $(\hat H_0-E_m)\phi_n=0$. If $n\not=m$ while $E_m(\xi_m)=E_n(\xi_m)$, then we may choose the eigenvectors $\phi_n$ and $\phi_m$ as orthogonal so that $(\phi_n,\partial_\xi \hat H_0 \phi_m)=0$ (we have that $\partial_\xi E_m\not=0$ since $E\not\in Z$ is not at a critical value of the energy branches).

As a result, when $\xi_n=\xi_m$ we have
\begin{align*}
  ( e^{i \xi_m x}\phi_n,i[H,P] e^{i \xi_m x}\phi_m)   =\delta_{mn} (\phi_m,\partial_{\xi}\hat H_0(\xi_m)\phi_m)  = \delta_{mn} \partial_\xi E_m .
\end{align*}
We used the normalization $\|\phi_m\|^2=1$.  This concludes the derivation.
\end{proof}

We thus conclude from \eqref{eq:approxscattering} and the above lemma that in the limits $x_0\to\pm\infty$,
\begin{equation}\label{eq:decpsimn}
 (\psi_m^Q,2\pi i[H,P(\cdot-x_0)]\psi_n^Q) \approx \sum_p J_p \alpha^\pm_{mp} \bar\alpha^\pm_{np}.
\end{equation}

Next we have the following result:
\begin{lemma}\label{lem:unitaryscattering}
 The scattering matrix $S(E)$ defined in \eqref{eq:scatteringmatrix} is unitary. 
\end{lemma}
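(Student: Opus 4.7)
The plan is to combine the current conservation of Lemma \ref{lem:currentconservation} with the asymptotic expansion \eqref{eq:decpsimn} of the current correlation between pairs of perturbed generalized eigenfunctions at the common energy $E$, and to recognize the resulting bilinear identity, once normalized by the group velocities $J_p$, as the block identity encoding the unitarity of $S$.

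First I would apply Lemma \ref{lem:currentconservation} to the pair $\psi^Q_m, \psi^Q_n$. Since both solve $(H-E)\psi = 0$, the correlation $x_0 \mapsto J_{mn}(x_0)$ is constant in $x_0$. Passing to the limits $x_0 \to \pm\infty$ in \eqref{eq:decpsimn} and equating the two limits yields the bilinear identity
\[
\sum_{p=1}^{\rME} J_p\, \alpha^+_{mp}\, \overline{\alpha^+_{np}} \;=\; \sum_{p=1}^{\rME} J_p\, \alpha^-_{mp}\, \overline{\alpha^-_{np}}, \qquad 1 \le m,n \le \rME,
\]
equivalently, the matrix relation $A^+ \mathcal J (A^+)^* = A^- \mathcal J (A^-)^*$ with $A^\pm := (\alpha^\pm_{mn})$ and $\mathcal J := \mathrm{diag}(J_1, \ldots, J_{\rME})$. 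Absorbing the currents into normalized amplitudes $\tilde\alpha^\pm_{mn} := \sqrt{|J_n|/|J_m|}\, \alpha^\pm_{mn}$ and setting $\epsilon := \mathrm{diag}(\mathrm{sign}\, J_p)$, a routine factorization converts this into
\[
\tilde A^+ \epsilon (\tilde A^+)^* \;=\; \tilde A^- \epsilon (\tilde A^-)^*.
\]

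Next I would order the $\rME = n_+ + n_-$ propagating modes so that the first $n_+$ indices correspond to $J_p > 0$ and the remaining $n_-$ to $J_p < 0$, so that $\epsilon = \mathrm{diag}(I_{n_+}, -I_{n_-})$. The definitions \eqref{eq:RTmn}, together with $\alpha^-_{mm}=1$ for $J_m>0$, $\alpha^+_{mm}=1$ for $J_m<0$, and the vanishing of all remaining entries, determine the block structures of each $\tilde A^\pm$ entirely in terms of identity blocks and the scattering blocks $T_\pm, R_\pm$. Expanding the normalized identity block by block, the minus signs carried by $\epsilon$ on the $n_-$ sector combine with the off-diagonal blocks in exactly the way needed to rearrange the three independent block equations into
\[
T_+ T_+^* + R_- R_-^* = I_{n_+}, \qquad T_+ R_+^* + R_- T_-^* = 0, \qquad T_- T_-^* + R_+ R_+^* = I_{n_-}.
\]
These are precisely the diagonal and off-diagonal blocks of $SS^* = I$, and since $S$ is finite-dimensional this is equivalent to $S^*S = I$, so $S$ is unitary.

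The only delicate point is the sign bookkeeping driven by $\epsilon$: current conservation is an identity for the indefinite bilinear form $\Phi(u,v) = \sum_p J_p\, u_p\, \bar v_p$, and one has to track how the minus signs on the $n_-$ block propagate through the block expansion to flip what would naively be hyperbolic relations into the genuine positive-definite unitarity statement for $S$. Once the block form of $\tilde A^\pm$ is laid out, the remaining manipulation is elementary linear algebra.
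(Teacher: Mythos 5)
Your proposal is correct and follows essentially the same route as the paper: both start from current conservation (Lemma \ref{lem:currentconservation}) combined with the asymptotic expansion \eqref{eq:decpsimn}, identify the constant-in-$x_0$ bilinear form $\Phi(u,v)=\sum_p J_p\,u_p\bar v_p$, and then read off the unitarity relations from the resulting block identity. You are more explicit than the paper about the intermediate matrix bookkeeping (the indefinite metric $\epsilon$, the block structure of $\tilde A^\pm$, and how the sign on the $n_-$ sector converts the hyperbolic-looking identity into the positive-definite relations $T_+T_+^*+R_-R_-^*=I$, etc.), which is a useful expansion of the paper's terse one-line deduction.
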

\begin{proof}
  From \eqref{eq:decpsimn} evaluated at $x_0\to\pm\infty$ and the current conservation in Lemma \ref{lem:currentconservation} stating that both limits are equal, we deduce that when $J_m>0$ and $J_n>0$, then
\[
 \sum_{J_p>0} \bar T^+_{mp} T^+_{np} = \delta_{mn}  -  \sum_{J_p<0} \bar R^+_{mp} R^+_{np} .
\]
This shows the orthonormality of the first $n_+$ columns of $S$. Considering the other cases $\pm J_m>0$ and $\pm J_n>0$ provides the other orthonormality constraints and concludes the proof. 
\end{proof}
\begin{lemma} \label{lem:tracescattering}
   Let $S$ be the above scattering matrix. Then
\begin{equation}\label{eq:tracescattering}
  \trr\ T^*_+T_+ - \trr\ T^*_-T_- = n_+-n_-.
\end{equation}
\end{lemma}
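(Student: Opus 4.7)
The plan is to derive the identity purely from the unitarity of $S$ established in Lemma \ref{lem:unitaryscattering}, without any further use of the wavefunctions or current conservation. The key observation is that both relations $S^*S = I_{n_++n_-}$ and $SS^* = I_{n_++n_-}$ produce usable block identities, and combining them shows that the two reflection operators $R_+$ and $R_-$ have the same Hilbert--Schmidt norm.

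First, I would write out the block form of $S^*S = I$, which gives the two diagonal identities
\begin{equation*}
T_+^* T_+ + R_+^* R_+ = I_{n_+}, \qquad T_-^* T_- + R_-^* R_- = I_{n_-}.
\end{equation*}
Taking traces yields $\trr\, T_+^*T_+ = n_+ - \trr\, R_+^*R_+$ and $\trr\, T_-^*T_- = n_- - \trr\, R_-^*R_-$, so
\begin{equation*}
\trr\, T_+^*T_+ - \trr\, T_-^*T_- = (n_+ - n_-) - \bigl(\trr\, R_+^*R_+ - \trr\, R_-^*R_-\bigr).
\end{equation*}
It therefore suffices to show $\trr\, R_+^*R_+ = \trr\, R_-^*R_-$.

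Second, I would exploit $SS^* = I$ to obtain the dual identity $T_+T_+^* + R_-R_-^* = I_{n_+}$. Taking the trace and using the cyclicity $\trr\, R_-R_-^* = \trr\, R_-^*R_-$ and $\trr\, T_+T_+^* = \trr\, T_+^*T_+$ gives
\begin{equation*}
\trr\, R_-^*R_- = n_+ - \trr\, T_+^*T_+ = \trr\, R_+^*R_+,
\end{equation*}
where the last equality comes from the first identity above. Substituting back into the displayed formula gives exactly $\trr\, T_+^*T_+ - \trr\, T_-^*T_- = n_+ - n_-$, which is \eqref{eq:tracescattering}.

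There is no real obstacle here: once unitarity of $S$ is known, the statement is a one-line consequence of comparing the $(1,1)$ blocks of $S^*S$ and $SS^*$. The content of the lemma lies entirely in Lemma \ref{lem:unitaryscattering}, which in turn rests on the current conservation of Lemma \ref{lem:currentconservation} combined with the asymptotic decomposition \eqref{eq:decpsimn}. In particular, one should note that the trace identity holds for any unitary matrix written in the block form \eqref{eq:scatteringmatrix}; no further analytic input is required.
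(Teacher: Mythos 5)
Your proof is correct and follows essentially the same route as the paper: both extract the diagonal block identities from $S^*S=I$ and $SS^*=I$, take traces, and use cyclicity to identify $\trr\,R_+^*R_+$ with $\trr\,R_-^*R_-$ before combining. No meaningful difference in approach.
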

\begin{proof}
 From the unitarity of the scattering matrix, we obtain that
\[
  S^*S=  \begin{pmatrix}T_+^* T_+ +R_+^*R_+ & T_+^* R_-+R^*_+ T_-  \\ R_-^*T_+ + T_-^*R_+ & R^*_- R_- + T_-^* T_-  \end{pmatrix}  = S S^* =  \begin{pmatrix}T_+ T_+^* +R_-R_-^* & T_+ R_+^*+R_- T_-^*  \\ R_+T_+^*+ T_- R_-^*  & R_+ R_+^*+ T_- T_-^*-\end{pmatrix}  = I.
\]
Looking at the diagonal terms, we obtain
\[
  \trr\  T_+^* T_+ +R_+^*R_+ = \trr\ T_+ T_+^* +R_-R_-^* = n_+,\quad
  \trr\  R^*_- R_- + T_-^* T_-  = \trr\ R_+ R_+^*+ T_- T_-^* = n_-.
\]
By cyclicity of the trace or explicit computation of the norm, we deduce that 
$\trr R_+^*R_+ = \trr R_- R_-^*= \trr R_-^* R_-$
so that $\trr T_+^* T_+ - \trr T_-^* T_- = n_+-n_-.$
This may be written using only one-sided measurements as
$n_+-n_- =  \trr\ (T_+^* T_  + +  R^*_-R_-)- n_- = n_+ - \trr\ (T_-^* T_ - +  R^*_+R_+ )$.
\end{proof}

We deduce from the spectral theorem and the decomposition \eqref{eq:spectraldecH} the following result.
\begin{proposition}\label{prop:sigmaIpsimQ}
Let $E\in \Rm\backslash Z$ and $\psi_m^Q$ the associated perturbed generalized eigenfunctions. Then:
\begin{equation}\label{eq:sigmaIpsimQ}
  %2\pi \sigma_I[H] = 
  \dsum_m   \Big| \frac{\partial\xi_m}{\partial E} \Big| (\psi_m^Q , 2\pi i[H,P] \psi_m^Q ) = n_+-n_-.
\end{equation}
\end{proposition}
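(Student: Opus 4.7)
The plan is to apply the current conservation result of Lemma \ref{lem:currentconservation} to $(\psi_m^Q,2\pi i[H,P(\cdot-x_0)]\psi_m^Q)$ for each fixed $m$. Since $\psi_m^Q$ is an eigenfunction of $H$ at a single energy $E=E_m(\xi_m)$, Lemma \ref{lem:currentconservation} with $n=m$ asserts that this quantity is independent of $x_0\in\Rm$. I am therefore free to evaluate it in whichever asymptotic regime $x_0\to\pm\infty$ is computationally convenient.

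Next I would combine the expansion \eqref{eq:psimV} with Lemma \ref{lem:unperturbedcurrent} to obtain the asymptotic formula \eqref{eq:decpsimn} specialized to $m=n$,
\[
(\psi_m^Q,2\pi i[H,P(\cdot-x_0)]\psi_m^Q)\;\approx\;\dsum_p J_p\,|\alpha^\pm_{mp}|^2\qquad\text{as } x_0\to\pm\infty.
\]
The key observation is that, for each fixed $m$, exactly one of the two sides collapses to a single term. If $J_m>0$, I evaluate at $x_0\to-\infty$ and invoke the convention recorded immediately after \eqref{eq:RTmn}: $\alpha^-_{mm}=1$ and $\alpha^-_{mp}=0$ for all $p\neq m$, so the sum reduces to $J_m$. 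Symmetrically, if $J_m<0$, I evaluate at $x_0\to+\infty$, where $\alpha^+_{mm}=1$ and $\alpha^+_{mp}=0$ for $p\neq m$, again producing $J_m$. In either case the $x_0$-independent value is exactly $J_m$.

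Finally, recalling from \eqref{eq:currents} that $|\partial\xi_m/\partial E|=|J_m|^{-1}$, the left-hand side of the claimed identity collapses to
\[
\dsum_m\Big|\frac{\partial\xi_m}{\partial E}\Big|\,J_m \;=\; \dsum_m \frac{J_m}{|J_m|} \;=\; \#\{m:J_m>0\}-\#\{m:J_m<0\} \;=\; n_+-n_-.
\]

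The argument is essentially bookkeeping once Lemmas \ref{lem:currentconservation} and \ref{lem:unperturbedcurrent} are in hand; no new analytic estimates are needed. The only subtle point is pairing each mode $m$ with the correct asymptotic direction in which its [H3] expansion degenerates to a single term, which is dictated by the sign of $J_m$. The conventions listed in [H3] immediately after \eqref{eq:RTmn} are designed precisely so that this pairing works, and current conservation then forces the other (more complicated) asymptotic side to agree automatically without any need to manipulate the full scattering coefficients.
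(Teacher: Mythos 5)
Your proof has a genuine gap, though it is subtle and stems partly from how the text around \eqref{eq:RTmn} is phrased. The key step is your claim that for $J_m>0$ one has $\alpha^-_{mp}=0$ for all $p\neq m$ (and analogously $\alpha^+_{mp}=0$ for $p\neq m$ when $J_m<0$), so that the sum $\sum_p J_p|\alpha^\pm_{mp}|^2$ collapses to the single term $J_m$. This is not correct. Recall $\psi_m^Q=\psi_m-R^+(E)Q\psi_m$, and the outgoing correction $-R^+(E)Q\psi_m$ radiates left-moving modes ($J_p<0$) as $x\to-\infty$ and right-moving modes ($J_p>0$) as $x\to+\infty$. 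Hence for $J_m>0$ the expansion at $x\to-\infty$ has, in addition to $\alpha^-_{mm}=1$, generically nonzero coefficients $\alpha^-_{mp}$ for $J_p<0$: these are precisely the reflection coefficients encoded in $R^-_{mp}$. (The superscripts on $\alpha$ in the $R^\pm$ lines of \eqref{eq:RTmn} appear to be interchanged relative to the rest of the section — compare with the paper's own computation in the proof of the present proposition and of Lemma \ref{lem:unitaryscattering}, where $R^+_{mn}$ for $J_m<0,J_n>0$ is extracted from an $\alpha^+$ asymptotic — and the literal phrasing of the footnote after \eqref{eq:RTmn} then misled you.)

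Once the reflection terms are restored, your single-mode evaluation gives $J_m\bigl(1-\sum_{J_p<0}|R^-_{mp}|^2\bigr)$ for $J_m>0$ at $x_0\to-\infty$ and $J_m\bigl(1-\sum_{J_p>0}|R^+_{mp}|^2\bigr)$ for $J_m<0$ at $x_0\to+\infty$, so the total sum becomes $\bigl(n_+-\trr R_-^*R_-\bigr)-\bigl(n_--\trr R_+^*R_+\bigr)$. To reach $n_+-n_-$ one still needs the unitarity consequence $\trr R_+^*R_+=\trr R_-^*R_-$ established in the proof of Lemma \ref{lem:tracescattering}. In other words, the ``bookkeeping'' you hoped to avoid is unavoidable: the current-conservation identity alone does not make each summand equal to $J_m$, and the final cancellation uses the unitarity of the scattering matrix. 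The paper's own proof sends $x_0\to+\infty$ for all modes, arrives at $\trr T_+^*T_+-n_-+\trr R_+^*R_+$, and closes the computation with $\trr T_+^*T_++\trr R_+^*R_+=n_+$; that last step (or the equivalent $\trr R_+^*R_+=\trr R_-^*R_-$) is what your argument is missing.
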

\begin{proof}
Using the inverse function theorem, we denote $J_m^{-1}=\partial_E\xi_m$.
We have from the above calculations in \eqref{eq:decpsimn} and sending $x_0\to+\infty$
\begin{align*}
   &\dsum_m   \Big| \frac{\partial\xi_m}{\partial E} \Big| (\psi_m^Q , 2\pi i[H,P] \psi_m^Q )  =  \dsum_m   \Big| \frac{\partial\xi_m}{\partial E} \Big| \dsum_n |\alpha^+_{mn}|^2 J_n
   \\=&\dsum_{J_m>0} \frac1{|J_m|} \dsum_{J_n>0} |T^+_{mn}|^2 |J_m|+ \dsum_{J_m<0} \frac1{|J_m|} \Big(J_m + \dsum_{J_n>0} |R^+_{mn}|^2 |J_m|\Big)
   \\=& \sum_{J_m>0,J_n>0} |T^+_{mn}|^2  - n_-  +  \sum_{J_m<0,J_n>0} |R^-_{mn}|^2  = n_+-n_-.
\end{align*}
We use here that there are $n_+$ modes with $J_m>0$ and $n_-$ modes with $J_m<0$. 
\end{proof}
%\gb{Do we need $E\not\in Z$? The proof below shows that point spectrum does not contribute to current. We need continuity of $(\psi^Q_j(\xi),i[H,P]\psi^Q_j)$ in $\xi$ also at $\xi_m$, which is not demonstrated. Without this result, we would then need to exclude $Z$ from admissible energies.}

As a corollary of the preceding proposition, we now conclude the proof of Theorem \ref{thm:scatmatrix}:\begin{proof} [Theorem \ref{thm:scatmatrix}]
From \eqref{eq:spectraldecH}, we have by spectral calculus
\[
  \varphi'(H)= \dsum_n  \varphi'(\lambda_n) \Pi_n + \dsum_j \dint_{\Rm} \varphi'(E_j(\xi)) \Pi^Q_j(\xi) d\xi.
\]
The operators $i[H,P]\Pi_n$ are trace-class with vanishing trace since
\[
  \Tr [H,P]\Pi_n = (\phi_n,[H,P]\phi_n) = (\phi_n, HP \phi_n) - (\phi_n, PH \phi_n) = \lambda_n (\phi_n, P \phi_n) - \lambda_n (\phi_n, P \phi_n)=0.
\]
Since the sum over $n$ is at most countable (and in fact finite since $[E_-,E_+]\cap Z=\emptyset$), it does not contribute to $\sigma_I[H]$. Thus, from the definition of the rank-one projectors $\Pi^Q_j$, and identifying $\psi^Q_j(\xi)$ with $\psi^Q_m(E)$ when $E_j(\xi_m)=E$, we find
\[
\begin{array}{rcl}
  2\pi \sigma_I[H] &=& \dsum_j \dint_{\Rm} \varphi'(E_j(\xi))  (\psi^Q_j(\xi),2\pi i[H,P]\psi^Q_j(\xi))  d\xi\\
    &=& \dsum_m \dint_{\Rm} \Big|\frac{\partial\xi_m}{\partial E} \Big| \varphi'(E)  (\psi^Q_m(E),2\pi i[H,P]\psi^Q_m(E))  dE \\
    &=&  \dint_{\Rm}\varphi'(E) (n_+-n_-)dE = n_+-n_- = \trr\ T^*_+T_+ - \trr\ T^*_-T_-.
  \end{array}
\]
%This result is independent of the choice of $\varphi'(h)$ by assumption [H4]; see \cite{2,3,bal2023topological,quinn2021approximations}. Therefore, from the smoothness of $E_j(\xi)$ and by the inverse function theorem, the smoothness of $\xi_m(E)$ defined so that $E_m(\xi_m(E))=E$, we deduce \eqref{eq:sigmaIpsimQ} in the limit of $\varphi'$ converging to a delta function at $E$ since $\xi\to(\psi^Q_j(\xi),i[H,P]\psi^Q_j(\xi))$ is continuous at $\xi_m(E)$.  
We used $\varphi\in \fS[0,1]$ and Lemma \ref{lem:tracescattering} to conclude. 
\end{proof}

\section{Spectral analysis and limiting absorption principle}\label{sec:lap}
This section is devoted to the proof of Theorem \ref{thm:lap}. 
We assume that $H_0$ is a self-adjoint differential operator as described in [H1] above. The resolvent operator $R_0(z)=(H_0-z)^{-1}$ then (possibly) displays different behaviors as $z$ approaches the real-axis with positive or negative imaginary parts. 

We recall that the spaces $L^2_s$ and $H^p_s$ are introduced in Definition \ref{def:LHs} and that $Z$ is the set of critical values of branches of spectrum of $H_0$ defined in \eqref{eq:Z}.  

\begin{lemma}\label{rmk:cpt}
Let $p>p'\geq0$ and $s>s'$. Then the injection  $H_s^p\xhookrightarrow{} H_{s'}^{p'}$ is compact.
%For $s>0,\ p\in\mathbb{N}_*$, the injection $H_s^p\xhookrightarrow{} H^{p-1}$ is compact.
\end{lemma}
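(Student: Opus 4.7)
My plan is the standard two-step argument for compactness of weighted Sobolev embeddings: local compactness by Rellich--Kondrachov plus a uniform tail estimate exploiting the strict gain of both weight and regularity.

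\emph{Step 1 (local compactness).} Let $(u_n)$ be bounded in $H^p_s$. On each ball $B_R=\{x^2+y^2\leq R^2\}$ the weights $\aver{x}^{2s}$ and $\aver{y}^{2p}$ are uniformly bounded above and below by positive constants depending on $R$, so $(u_n|_{B_R})$ is bounded in the unweighted Sobolev space $H^p(B_R)$. The classical Rellich--Kondrachov theorem then yields a subsequence converging in $H^{p'}(B_R)$. A diagonal extraction over an exhausting sequence $R_k\uparrow\infty$ produces a further subsequence, still written $(u_n)$, that converges in $H^{p'}_{\mathrm{loc}}(\Rm^2)$ to some limit $u$, which lies in $H^p_s$ by lower semicontinuity of the norm.

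\emph{Step 2 (tail estimate).} Set $\delta=\min(s-s',p-p')>0$. I claim that for every $v\in H^p_s$,
\[
\|v\|_{H^{p'}_{s'}(B_R^c)}^2 \;\leq\; C\,\aver{R}^{-2\delta}\,\|v\|_{H^p_s}^2 .
\]
For the $L^2$-type contribution this is immediate: on $B_R^c$ one has either $|x|\geq R/\sqrt2$ (giving $\aver{x}^{2(s'-s)}\leq \aver{R}^{-2(s-s')}$) or $|y|\geq R/\sqrt2$ (giving $\aver{y}^{2(p'-p)}\leq \aver{R}^{-2(p-p')}$, while $\aver{x}^{2s'}\leq\aver{x}^{2s}$), so in either case the integrand of $\|v\|_{H^{p'}_{s'}}^2$ is dominated by $\aver{R}^{-2\delta}$ times the integrand of $\|v\|_{H^p_s}^2$. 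For the top-order contribution $\int_{B_R^c}\aver{x}^{2s'}\sum_{|\alpha|=p'}|D^\alpha v|^2$, I would first upgrade the $H^p_s$-bound to control of all intermediate derivatives with the same weight, i.e.
\[
\sum_{|\alpha|\leq p}\int\aver{x}^{2s}|D^\alpha v|^2\,dxdy \;\leq\; C\|v\|_{H^p_s}^2 ,
\]
via Sobolev interpolation applied to $\aver{x}^{s}v$ (the commutators $[D^\alpha,\aver{x}^s]$ are of strictly lower $x$-order and pointwise bounded by $C\aver{x}^s$). Restricting the result to $\{|x|\geq R/\sqrt2\}$ produces the factor $\aver{R}^{-2(s-s')}$. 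On the complementary sliver $\{|y|\geq R/\sqrt2,\ |x|\leq R/\sqrt2\}$ one uses the $\aver{y}^{2p}$ weight in the $L^2$ part of $\|v\|_{H^p_s}$ together with interior interpolation on horizontal strips to extract the $\aver{R}^{-2(p-p')}$ factor.

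\emph{Step 3 (conclusion).} Given $\eps>0$, choose $R$ so that the uniform tail bound guarantees $\|u_n-u_m\|_{H^{p'}_{s'}(B_R^c)}<\eps$ for all $n,m$; Step 1 gives $\|u_n-u_m\|_{H^{p'}(B_R)}<\eps$ for $n,m$ large, and since on $B_R$ the weights are comparable to constants, this transfers to the $H^{p'}_{s'}(B_R)$ norm. Hence the subsequence is Cauchy in $H^{p'}_{s'}$, proving compactness.

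The only non-routine step is the intermediate-derivative bound with the $\aver{x}^{2s}$ weight, because the definition of $H^p_s$ is anisotropic: the $y$-weight $\aver{y}^{2p}$ appears only on the $L^2$ part, not on the top-order derivatives. I expect this interpolation/commutator bookkeeping to be the main technical obstacle; once it is in hand, the diagonal-plus-tail argument closes in a standard way.
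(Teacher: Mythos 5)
The paper does not argue this lemma from scratch at all: it cites Triebel's theory of weighted function spaces, identifying $H^p_s$ with a weighted Triebel--Lizorkin space $F^p_{22}(\Rm^2,w_{p,s})$ for the weight $w_{p,s}(x,y)=\aver{x}^s\aver{y}^p$, and invokes a general compact-embedding theorem for pairs of admissible weights whose ratio tends to zero at infinity. Your proposal is a self-contained Rellich--Kondrachov plus tail-estimate argument; this is a genuinely different, more elementary route (in fact it is the strategy a commented-out earlier version of this proof in the source followed before the authors replaced it by the Triebel citation). The overall scheme -- local compactness, a uniform tail bound with decay $\aver{R}^{-2\delta}$, diagonalization -- is sound, and you handle the $L^2$-weight tail correctly.

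The gap is in Step~2 for the top-order derivative piece, and it is a genuine one, not just bookkeeping. Two interlocking interpolation claims are asserted without proof and, as written, the first is circular. (a) You claim $\sum_{|\alpha|\le p}\int\aver{x}^{2s}|D^\alpha v|^2\lesssim\|v\|_{H^p_s}^2$ ``via Sobolev interpolation applied to $\aver{x}^s v$,'' noting the commutators $[D^\alpha,\aver{x}^s]$ are lower order with coefficients $O(\aver{x}^s)$. But those commutator terms are exactly the quantities $\aver{x}^{s-k}D^\beta v$ with $|\beta|<p$ that you are trying to bound, so the argument needs a non-circular closure -- e.g.\ an integration-by-parts/induction on $|\alpha|$ with absorption, or working with $w_\eps(x)=\aver{\eps x}^s$ so that the commutator coefficients carry a small factor $\eps$ and can be absorbed. (b) More seriously, on the sliver $\{|y|\ge R/\sqrt2,\ |x|\le R/\sqrt2\}$ the $H^p_s$ norm gives you no $y$-decay whatsoever on the top-order derivatives; the only $y$-decay available is the $\aver{y}^{2p}$ factor on the $L^2$ piece. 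Extracting the factor $\aver{R}^{-2(p-p')}$ for $\int_{|y|\ge R}\aver{x}^{2s}|D^\alpha v|^2$, $|\alpha|=p'$, therefore requires integrating by parts against a $y$-cutoff $\chi_R$ and using $\aver{y}\gtrsim\aver{R}$ on its support to trade against the $\aver{y}^{2p}$ weight; the cutoff-derivative error terms re-introduce exactly the intermediate-derivative norms from~(a). ``Interior interpolation on horizontal strips'' names this computation but does not carry it out, and it is precisely where the anisotropy of the norm (no $y$-weight on $D^\alpha v$ for $|\alpha|=p$) bites. You have correctly identified the obstruction; the proposal as it stands stops before resolving it, whereas the paper sidesteps it entirely by citation.
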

\begin{proof} 
  This result follows from standard embedding results for weighted Sobolev spaces in, e.g., \cite[Chapter 6]{triebel-2006}. Defining weights $w_{p,s}(x,y)=\aver{x}^s\aver{y}^p$ satisfying the hypotheses of \cite[Definition 6.1]{triebel-2006} with $w_{p',s'}/w_{p,s} (x,y)\to0$ as $|(x,y)|\to\infty$, and with the identification $F^p_{22}(\Rm^n,w_{p,s})\cong H^p_s$, we then use \cite[Theorems 6.5 \& 6.7]{triebel-2006} to conclude.
\end{proof}
This generalizes compactness results obtained in \cite{ASNSP_1975_4_2_2_151_0}. 
%%%. END OLD (incomplete) PROOF
%%%%

We assume that hypotheses \ref{hyp:Q} and \ref{hyp:H0H} hold. The proof of Theorem \ref{thm:lap} is split into three parts.
\begin{proposition}\label{prop:eigenvalue}
The point spectrum of $H$ is discrete away from $Z$. The only possible limiting points of families of eigenvalues are in  $Z\cup \{\pm \infty\}$.
\end{proposition}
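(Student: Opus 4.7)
The plan is to combine the a priori estimate \eqref{eq:H0real} with the $x$-decay of $Q$ from Hypothesis~\ref{hyp:Q} in order to bootstrap the decay of any eigenfunction of $H$, then to use the compact embedding of Lemma~\ref{rmk:cpt} to rule out both infinite multiplicity and accumulation of eigenvalues in $\Rm\setminus Z$. This is the classical Agmon-style strategy adapted to the weighted spaces of Definition~\ref{def:LHs}.

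Fix $\lambda_\ast\in\Rm\setminus Z$ and choose a bounded open interval $(a,b)\ni\lambda_\ast$ with $[a,b]\cap Z=\emptyset$. For any eigenfunction $u\in\mD(H)=H^p$ with $Hu=\lambda u$, $\lambda\in(a,b)$, one has $(H_0-\lambda)u=-Qu$. The pointwise bound $|Q(x,y)|\leq C\aver{x}^{-h}$ immediately yields
\[
\|Qu\|_{L^2_{t}} \ \leq\ C\,\|u\|_{L^2_{t-h}},\qquad t\in\Rm,
\]
and inserting this into \eqref{eq:H0real} for any fixed $\epsilon\in(0,h-1)$ gives
\[
\|u\|_{H^p_{t-1-\epsilon}} \ \leq\ C\,\|u\|_{L^2_{t-h}},
\]
with $C=C(t,\epsilon,a,b)$ independent of $\lambda\in(a,b)$. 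Starting from $u\in L^2_0$, applied with $t=h$ this produces $u\in H^p_{h-1-\epsilon}\subset L^2_{h-1-\epsilon}$; applied again with $t=2h-1-\epsilon$ it produces $u\in H^p_{2(h-1-\epsilon)}$, and so on. Each pass strictly gains $h-1-\epsilon>0$ of decay, so after finitely many iterations I would conclude
\[
u\in H^p_s\quad\text{for every } s>0,\qquad \|u\|_{H^p_s}\ \leq\ C_s\,\|u\|_{L^2}\quad \text{uniformly in }\lambda\in(a,b).
\]

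The two assertions of the proposition then reduce to standard compactness arguments. For finite multiplicity, the $L^2$-unit ball of $\ker(H-\lambda)$ is bounded in some $H^p_s$ with $s>0$, hence precompact in $L^2$ by Lemma~\ref{rmk:cpt}, forcing $\ker(H-\lambda)$ to be finite-dimensional. To exclude accumulation at $\lambda_\ast$, suppose for contradiction that distinct eigenvalues $\lambda_n\in(a,b)$ satisfy $\lambda_n\to\lambda_\ast$ and let $u_n$ be corresponding orthonormal eigenfunctions. The bootstrap, applied uniformly in $\lambda_n\in(a,b)$, yields $\|u_n\|_{H^p_s}\leq C_s$ uniformly in $n$; Lemma~\ref{rmk:cpt} extracts an $L^2$-convergent subsequence, contradicting $\|u_n-u_m\|_{L^2}^2=2$ for $n\neq m$. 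Hence the point spectrum is locally finite in $\Rm\setminus Z$, and the only possible accumulation points lie in $Z\cup\{\pm\infty\}$.

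The main obstacle is to run the bootstrap uniformly: one must verify that the constant in \eqref{eq:H0real} can be chosen independently of $\lambda$ in a neighborhood of $\lambda_\ast$, and ensure that the gain $h-1-\epsilon>0$ is available at every step. This is precisely what the short-range condition $h>1$ in Hypothesis~\ref{hyp:Q} buys us, together with the uniform dependence $C=C(s,a,b)$ built into Hypothesis~\ref{hyp:H0H}; everything else is book-keeping.
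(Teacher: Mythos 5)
Your proof is correct and follows essentially the same Agmon-style route as the paper: apply the weighted a priori estimate \eqref{eq:H0real} to $(H_0-\lambda)u=-Qu$ to bootstrap eigenfunction decay uniformly in $\lambda\in(a,b)$, then invoke the compact embedding of Lemma~\ref{rmk:cpt} to rule out infinite multiplicity and accumulation of eigenvalues away from $Z$. One small remark: a single bootstrap step suffices — one application with $t=h$ already gives $u\in H^p_{h-1-\epsilon}$ with $h-1-\epsilon>0$, and Lemma~\ref{rmk:cpt} then provides the needed compactness into $L^2$; the iteration to $H^p_s$ for all $s>0$ is correct but not required, and the paper indeed stops after the first gain of decay.
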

\begin{proof}
We will show that if $u\in H^p$ is an eigenfunction corresponding to $\lambda$, i.e., $Hu=\lambda u,\ a<\lambda <b$, where $a,b$ satisfies 
%the condition in Proposition \ref{est:dirac_real}, 
condition (2.) in Hypothesis \ref{hyp:H0H},
then $u\in H^p_s$ for some $s>0$ and %that
\begin{gather}\label{ieq1:eigen}
    \|u\|_{H^p_s}\leq C\|u\|_{L^2},
\end{gather}
with some constant $C$ independent of $\lambda$.

The proposition is a direct corollary. Indeed, suppose $\{u_n\}$ is a set of eigenfunctions with norm 1 in $H_s^p$. By \eqref{ieq1:eigen}, $\|u_n\|_{L^2}$ is bounded below by a positive constant. Also by Lemma \ref{rmk:cpt}, the injection map from $H_s^p$ into $L^2$ is compact. As $\{u_n\}$ is orthogonal and bounded both below and above in $L^2$, it must be in a finite set. This implies that $H$ has finitely many eigenvalues in $[a,b]$ including multiplicity.

To prove \eqref{ieq1:eigen}, we first show that it holds true for $s=0$. Indeed, since $Hu=\lambda u$,
\[(H_0-i) u = (\lambda-i) u + g \in L^2,\quad g=-Qu= (H_0-\lambda)u.\]
Thus $u=R_0(i)[(\lambda-i) u + g]$ with $R_0(i)$ mapping from $\mH$ to $H^p$ by ellipticity assumption [H1(o)] and with $Q$ bounded implies that \eqref{ieq1:eigen} holds with $s=0$.  Now, the operator of multiplication by $Q(x,y)$ is a continuous operator from $L^2$ (and hence $H^p$) into $L^2_h(\mathbb{R}^2)$ by hypothesis \ref{hyp:Q}. This implies
\begin{gather*}
    \|g\|_{L_h^2}\leq C_1\|u\|_{H^p} \leq C_2\|u\|_{L^2},
\end{gather*}
with some constants $C_1, C_2$ independent of $\lambda$ on a compact interval.
For $g\in L_h^2$, we apply condition (2.) in Hypothesis \ref{hyp:H0H} to deduce that $u\in H^p_{h-1-\epsilon}$ for any $\epsilon>0$. Choosing $0<\epsilon<h-1$, we proved \eqref{ieq1:eigen}.  
\end{proof}

\begin{lemma}\label{lmm:hp}
Assume that condition (2.) in Hypothesis \ref{hyp:H0H} holds for $H_0$. Then there exists a constant $C(R)>0$ depending on $R>0$ such that the following inequality holds:
\begin{gather*}
    \|u\|_{H^p(|x|\leq R)}\leq C(R)\left( \|u\|_{H^{p-1}(|x|\leq R+1)}+ \|H_0u\|_{L^2(|x|\leq R+1)}\right). 
\end{gather*}
\end{lemma}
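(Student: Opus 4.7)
The plan is to prove this as a standard cutoff-and-commutator argument that reduces the local elliptic regularity estimate to the global weighted a priori bound \eqref{eq:H0real} in Hypothesis \ref{hyp:H0H}(2.). The guiding observation is that on the support of a cutoff $\chi=\chi(x)$ compactly supported in $x$, every weight $\langle x\rangle^s$ is bounded by an $R$-dependent constant, while the commutator $[H_0,\chi]$ is a differential operator of order at most $p-1$ since $\chi$ depends only on the one variable in which $H_0$ carries $x$-derivatives of positive order.

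Concretely, I would pick $\chi\in C_c^\infty(\Rm)$ with $\chi\equiv 1$ on $|x|\le R$ and $\mathrm{supp}\,\chi\subset\{|x|\le R+1\}$, and fix any $\lambda\in(a,b)\subset\Rm\setminus Z$ (possible because $Z$ is locally finite by [H1](iii), with the $(a,b)$-dependent constant then absorbed in $C(R)$). Applying \eqref{eq:H0real} to $\chi u$ with $s=1+\epsilon$ chosen so that $H^p_{s-1-\epsilon}=H^p$ exactly yields
\begin{gather*}
  \|\chi u\|_{H^p}\ \le\ C\,\|(H_0-\lambda)(\chi u)\|_{L^2_{1+\epsilon}}.
\end{gather*}
The algebraic splitting $(H_0-\lambda)(\chi u)=\chi(H_0-\lambda)u+[H_0,\chi]u$ then produces three terms, all supported in $\{|x|\le R+1\}$, so the $L^2_{1+\epsilon}$-norm reduces to $C(R)$ times the plain $L^2$-norm there.

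Bounding each piece individually yields $\|H_0 u\|_{L^2(|x|\le R+1)}$ from $\chi H_0 u$, $\|u\|_{H^{p-1}(|x|\le R+1)}$ from $\lambda\chi u$ (via the trivial inclusion $H^{p-1}\hookrightarrow L^2$), and once more $\|u\|_{H^{p-1}(|x|\le R+1)}$ from the commutator $[H_0,\chi]u$; since $\chi u = u$ on $|x|\le R$, combining gives the desired inequality. The only mild subtlety is verifying that $[H_0,\chi]$ maps $H^{p-1}(|x|\le R+1)$ continuously into $L^2(|x|\le R+1)$. This is automatic for the operators considered here --- Dirac and Klein--Gordon --- because their $x$-derivative part has $y$-independent coefficients, so $[H_0,\chi]$ is a purely $x$-derivative operator of order $\le p-1$ with compactly supported coefficients, and the $\langle y\rangle^{p-1}$ weight built into the definition of the $H^{p-1}$-norm absorbs any residual $y$-dependence without further work.
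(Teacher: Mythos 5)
Your proof is correct and follows essentially the same route as the paper: both insert a compactly supported cutoff $\chi$ (called $v_R$ in the text) that is $1$ on $|x|\le R$, apply the global weighted estimate \eqref{eq:H0real} with $s=1+\epsilon$ to $\chi u$, and then decompose $(H_0-\lambda)(\chi u)$ via the Leibniz rule into $\chi(H_0-\lambda)u$ plus a commutator $[H_0,\chi]$ of order at most $p-1$ with compactly supported $x$-only coefficients. The only difference is presentational — you phrase the expansion as a commutator, while the paper writes it out explicitly as $H_0(v_Ru)=v_RH_0u+\sum_j v_R^{(j)}H'_{p-j}u$ — and you share the same mild glossing over of intermediate-order derivatives being controlled by the $H^{p-1}$ norm.
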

\begin{proof} This is a standard regularity result. We use the notation $x\in\Rm^2$ here.
We choose a $v_{R}(x)\in C_c^\infty(\mathbb{R})$ such that
\begin{gather*}
    v_{R}(x)=\begin{cases}
        1, & |x|\leq R \\
        0, & |x|\geq R+1,
    \end{cases} \qquad
    \underset{1\leq j\leq p}{\max}\|v^{(j)}_{R}(x)\|_\infty\leq C_1,
\end{gather*}
where $C_1$ is a constant independent of $R>0$.
By condition (2.) in Hypothesis \ref{hyp:H0H},
\begin{gather}\label{est:hp1}
    \|u\|_{H^p(|x|\leq R)}\leq \|v_{R} u\|_{H^p}\leq \|(H_0-\lambda)(v_{R}u)\|_{L_{1+\epsilon}^2}\leq \|H_0(v_{R}u)\|_{L_{1+\epsilon}^2}+\|\lambda u\|_{L_{1+\epsilon}^2(|x|\leq R)}
\end{gather}
for some $\epsilon>0$ and some $\lambda\in (a,b)$ where $[a,b]\cap Z=\emptyset$. Since $H_0$ is an elliptic operator of order $p$, $H_0(v_{R,r}u)$ can be represented in the form
\begin{gather}\label{eq:h0u}
H_0(v_{R}u)=v_{R}H_0u+\sum_{j=1}^pv_{R}^{(j)}(x)H'_{p-j}(u),
\end{gather}
where $H'_j$ is a differential operator of order at most $j$. Thus we have 
\begin{gather}\label{est:hp2}
\|H_0(v_Ru)\|_{L_{1+\epsilon}^2}\leq C(R)\left[\|H_0u\|_{L^2(|x|\leq R+1)}+\|u\|_{H^{p-1}(|x|\leq R+1)}\right]
\end{gather}
with some constant $C(R)>0$. We conclude the proof of the lemma using \ref{est:hp1} and \ref{est:hp2}.
\end{proof}

\begin{lemma}\label{lmm:fourier_transform}
Suppose that for some $u\in H_s^p$ with $s>\frac{1}{2}$,  we have
$$(H_0-z)u=f.$$
Then, for any $g\in L_s^2$, $(u,g)$ can be represented as follows:
\begin{gather}
    (u, g)=\sum_{j\in J}\Big(\frac{\hat{f}_{j}(\xi)}{E_j(\xi)-z}, \hat{g}_j(\xi)\Big),
\end{gather}
where we used the Fourier transform defined in \eqref{eq:FT}.
%where $\widehat{u}_j(\xi), \widehat{g}_j(\xi)$ are the coefficients of $\widehat{u}(\xi, y),\ \widehat{g}(\xi, y)$ decomposed under the orthonormal basis $(\phi_j)_j$. 
\end{lemma}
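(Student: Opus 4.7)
The plan is to convert the equation $(H_0-z)u=f$ into a pointwise algebraic identity in the generalized Fourier picture defined by \eqref{eq:FT}--\eqref{eq:IFT}, in which $H_0$ is realized as multiplication by the dispersion $E_j(\xi)$, and then apply Parseval. Since $s>\tfrac12>0$, the weighted spaces embed continuously into $L^2$: $u\in H_s^p\subset L^2$, $g\in L_s^2\subset L^2$, and the ellipticity statement [H1(o)] yields $f=(H_0-z)u\in L^2$. The choice $s>\tfrac12$ moreover guarantees that the pairings $\hat u_j(\xi)=(u,\psi_j(\cdot,\xi))$, $\hat g_j(\xi)=(g,\psi_j(\cdot,\xi))$, and $\hat f_j(\xi)=(f,\psi_j(\cdot,\xi))$ are pointwise well-defined by absolute convergence, since $\psi_j(\cdot,\xi)\in H_{-s}^p$.

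The first step is to establish the intertwining relation
\[
\widehat{(H_0u)}_j(\xi)=E_j(\xi)\,\hat u_j(\xi)\qquad\text{a.e.\ on }J\times\Rm.
\]
This reduces to $(H_0u,\psi_j(\cdot,\xi))=E_j(\xi)(u,\psi_j(\cdot,\xi))$, which follows from $(H_0-E_j(\xi))\psi_j(\cdot,\xi)=0$ by [H1(i)] together with the formal symmetry of $H_0$, justified by a standard density argument approximating $u\in H_s^p$ by functions in $C_c^\infty(\Rm^2)\otimes\Cm^q$ (the weight $\aver{x}^s$ with $s>\tfrac12$ compensates the non-decay of $\psi_j(\cdot,\xi)$ in $x$ so the integration by parts is licit). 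Equivalently, this intertwining is encoded in the spectral decomposition \eqref{eq:spectraldecH0} via the isometry $\mF\colon\mH\to L^2(J\times\Rm,d\Xi)$ recorded just below \eqref{eq:IFT}.

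Applying $\mF$ to both sides of $(H_0-z)u=f$ then yields the pointwise identity $(E_j(\xi)-z)\hat u_j(\xi)=\hat f_j(\xi)$, so $\hat u_j(\xi)=\hat f_j(\xi)/(E_j(\xi)-z)$ wherever the denominator does not vanish. For $\operatorname{Im}z\neq 0$ this holds for every $(j,\xi)$; for real $z$, [H1(ii)] forces $E_j$ to be non-constant, so the zero set of $E_j(\xi)-z$ has measure zero and $\hat f_j$ vanishes automatically there by the same identity. The Plancherel identity for $\mF$ now gives
\[
(u,g)=(\mF u,\mF g)_{L^2(J\times\Rm)}=\sum_{j\in J}\int_{\Rm}\hat u_j(\xi)\,\overline{\hat g_j(\xi)}\,d\xi=\sum_{j\in J}\Big(\frac{\hat f_j(\xi)}{E_j(\xi)-z},\hat g_j(\xi)\Big),
\]
which is the asserted representation. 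The only technical delicacy is the intertwining step, and that reduces entirely to the spectral hypothesis [H1]; there is no new estimate to establish beyond bookkeeping of the density argument.
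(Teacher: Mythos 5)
Your proof is correct and takes essentially the same route as the paper: both arguments boil down to the fact that the generalized Fourier transform $\mF$ in \eqref{eq:FT}--\eqref{eq:IFT} diagonalizes $H_0$ as multiplication by $E_j(\xi)$, after which Parseval gives the claimed identity. The paper phrases it backward through the spectral representation of the resolvent $(H_0-z)^{-1}=\sum_j\int\Pi_j(\xi)\,d\xi/(E_j(\xi)-z)$ applied to $f$, whereas you phrase it forward by applying $\mF$ to $(H_0-z)u=f$ to obtain $(E_j(\xi)-z)\hat u_j(\xi)=\hat f_j(\xi)$ and then dividing; these are logically interchangeable. Your version has the small advantage of not presupposing that $(H_0-z)^{-1}$ exists, which makes the case $z\in\Rm$ (where $u$ is given but the resolvent need not exist) transparent. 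One minor imprecision: you write $f=(H_0-z)u\in L^2$ but then claim that $\hat f_j(\xi)=(f,\psi_j(\cdot,\xi))$ is absolutely convergent against $\psi_j\in H^p_{-s}$; for that you actually need $f\in L^2_s$, which holds because $H_0$ is a differential operator with $x$-independent coefficients and so maps $H^p_s$ into $L^2_s$ (not merely into $L^2$). With that adjustment the argument is complete.
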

%\gb{Not clear we want this as a lemma; just a formula}
\begin{proof}
 From the spectral representation \eqref{eq:spectraldecH0} and the spectral theorem, we have
 \[
   (H_0-z)^{-1} = \dsum_{j\in J} \dint_{\Rm} \frac{\Pi_j(\xi) d\xi}{E_j(\xi)-z}
 \]
 and the formula follows from the Fourier transform \eqref{eq:FT} for $f\in L^2_s$ and the Parseval equality for the standard Fourier transform.
\end{proof}

%%%% OLD PROOF
\iffalse
\begin{proof}
\textcolor{red}{(TBA)}
For $g\in L_s^2$, $(u, g)$ can be represented in terms of the Fourier transform:
\begin{gather}\label{eq:inner_product}
    (u, g)=(\widehat{u}(\xi,y), \widehat{g}(\xi,y))=\sum_{j\in M}(\widehat{u}_j(\xi), \widehat{g}_{j}(\xi))=\sum_{j\in M}\left(\frac{\widehat{f}_{j}(\xi)}{E_j(\xi)-\lambda_n}, \widehat{g}_j(\xi)\right).
\end{gather}
\end{proof}
\gb{This seems fine. Brackets change from line to line though. Issue as in Agmon is to prove these terms all make sense. Formally this is clear.}
\fi %%%
%%%
We use the notation ${\mathcal B}(H_1,H_2)$ to denote bounded linear operators from $H_1$ to $H_2$.
We now prove the Principle of limiting absorption for $H_0$: 
%%%% \gb{Looks fine. Not sure why we need $h$ here though. We will need $1<2s<h$ eventually.}
\begin{theorem}[Principle of limiting absorption for $H_0$] \label{thm:pla}
Let $R_0(z)=(H_0-z)^{-1}$ be the resolvent of $H_0$. Let $a,b\in\Rm$ such that $[a,b]\cap Z=\emptyset$. For 
%%%$\frac{1}{2}<s<h, 
$\lambda\in (a,b)$, the following two limits exist in the uniform operator topology of ${\mathcal B}(L_s^2, H_{-s}^p)$:
\begin{gather*}
    \lim_{z\to\lambda\pm 0i} R_0(z)=R_0^\pm(\lambda).
\end{gather*}
Moreover, $u^\pm=R_0^\pm(\lambda)f$ are (outgoing and incoming) solutions of the equation 
\begin{gather*}
    (H_0-\lambda)u(x)=f(x).
\end{gather*}
\end{theorem}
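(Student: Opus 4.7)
The plan is to combine the Fourier/spectral representation of $R_0(z)$ from \eqref{eq:spectraldecH0} with the a priori estimates in Hypothesis \ref{hyp:H0H}, following the strategy of \cite{ASNSP_1975_4_2_2_151_0,yamada}. By Lemma \ref{lmm:fourier_transform}, for $f,g \in L_s^2$ with $s > \tfrac12$,
\[
(R_0(z) f, g) = \sum_{j \in J} \int_\Rm \frac{\hat f_j(\xi)\,\overline{\hat g_j(\xi)}}{E_j(\xi) - z}\,d\xi.
\]
Since $[a,b] \cap Z = \emptyset$ and by [H1(ii)] only finitely many branches $E_j$ meet $[a,b]$, the equation $E_j(\xi) = \lambda$ has finitely many roots $\xi_m^j$ with $E_j'(\xi_m^j) \neq 0$. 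A partition of unity in $\xi$ near each root, followed by the local change of variables $\eta = E_j(\xi)$, reduces each contribution to a Cauchy integral of the form $\int h(\eta)/(\eta - z)\,d\eta$, to which the Sokhotski-Plemelj formula applies. The weight condition $s > \tfrac12$ makes each $\hat f_j$ Hölder continuous in $\xi$, so that the principal value and the pointwise residue-type terms are well-defined, and the sesquilinear limits $(R_0^\pm(\lambda) f, g)$ exist.

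Estimate (1.) of Hypothesis \ref{hyp:H0H} applied to $u = R_0(z) f$ then gives $\|R_0(z) f\|_{H^p_{-s}} \leq C \|f\|_{L^2_s}$ uniformly for $z \in \rJ(a,b)$ and $s > \tfrac12$, while (2.) extends the same bound to real $\lambda$; this provides the uniform boundedness of the family and of the candidate limits in $\mathcal{B}(L^2_s, H^p_{-s})$. That $u^\pm = R_0^\pm(\lambda) f$ satisfies $(H_0 - \lambda) u^\pm = f$ then follows by passing to the limit in the identity $(H_0 - z) R_0(z) f = f$: since $H_0$ is a differential operator of order $p$ mapping $H^p_{-s}$ continuously into $L^2_{-s}$, the identity carries over in that weaker topology.

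The main obstacle is upgrading the resulting strong convergence on the dense set $L^2_s$ to convergence in the uniform operator topology of $\mathcal{B}(L^2_s, H^p_{-s})$. I would argue by contradiction: suppose sequences $z_n \to \lambda + i0$ and unit vectors $f_n \in L^2_s$ exist with $\|(R_0(z_n) - R_0^+(\lambda)) f_n\|_{H^p_{-s}} \geq \delta > 0$. By weak compactness, extract $f_n \rightharpoonup f$ in $L^2_s$; the uniform bound keeps $u_n := R_0(z_n) f_n$ bounded in $H^p_{-s}$, and the compact embedding $H^p_{-s} \hookrightarrow H^{p-1}_{-s'}$ for $s' > s$ (Lemma \ref{rmk:cpt}) yields a subsequence converging strongly in the weaker space. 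Testing against a dense family of $g \in C_c^\infty$ via the Fourier formula above identifies this limit as $R_0^+(\lambda) f$; local elliptic regularity (Lemma \ref{lmm:hp}) applied to $(H_0 - z_n) u_n = f_n$ then bootstraps the convergence back to $H^p_{-s}$, contradicting $\delta > 0$. The same contradiction scheme applied to sequences $\lambda_n \to \lambda$ in $(a,b)$ gives the continuity of $\lambda \mapsto R_0^\pm(\lambda)$ in $\mathcal{B}(L^2_s, H^p_{-s})$.
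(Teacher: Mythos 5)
Your outline follows the paper's strategy closely (Fourier representation plus Plemelj for the sesquilinear boundary values, the a priori estimate (1.) for uniform boundedness, compact embeddings and a contradiction argument for operator-norm convergence). But the final ``bootstrap'' step is the crux, and the tool you invoke there is the wrong one: Lemma~\ref{lmm:hp} is a \emph{local} elliptic estimate on $|x|\leq R$, and by itself it cannot give convergence in the weighted space $H^p_{-s}$, because nothing in it controls the tails $|x|>R$ where the weight $\aver{x}^{-2s}$ is small but not zero. What the paper actually uses at this step is the global weighted inequality
\begin{equation*}
\|u\|_{H^p_{-s}} \;\leq\; \|u\|_{L^2_{-s}} + \|H_0 u\|_{L^2_{-s}} + C_0\,\|u\|_{H^{p-1}_{-s-1}},
\end{equation*}
derived from the elliptic bound $\|v\|_{H^p}\leq\|(H_0-i)v\|_{L^2}$ applied to $v=\aver{x}^{-s}u$ and the commutator expansion $H_0(\aver{x}^{-s}u)=\aver{x}^{-s}H_0u+\sum_{j\geq1}(\partial_x^j\aver{x}^{-s})\,H'_{p-j}(u)$; see \eqref{eq:norm}--\eqref{ineq:u}. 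This inequality is what converts convergence in $L^2_{-s}$, in $H^{p-1}_{-s-1}$, and of $H_0u_n$ in $L^2_{-s}$ (obtained via compact embeddings) into convergence in $H^p_{-s}$. Without a global estimate of this type (or a manual splitting into $|x|\leq R$ and $|x|>R$ with a quantitative tail bound using boundedness in $H^p_{-s'}$ for some $s'<s$, which you do not carry out) the contradiction argument does not close.

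A second, related issue: in your operator-norm contradiction you identify the limit of $u_n=R_0(z_n)f_n$ as $R_0^+(\lambda)f$ and then try to bootstrap the difference $u_n-R_0^+(\lambda)f$. But then the key term $\|H_0(u_n-R_0^+(\lambda)f)\|_{L^2_{-s}}$ contains $\|f_n-f\|_{L^2_{-s}}$, which does not go to zero from weak convergence $f_n\rightharpoonup f$ in $L^2_s$ (the embedding $L^2_s\hookrightarrow L^2_{-s}$ is bounded but not compact). The paper's contradiction hypothesis is deliberately phrased as $\|R_0(z_n)f_n-R_0^+(\lambda)f_n\|_{H^p_{-s}}>\epsilon$, and one should apply \eqref{ineq:u} to $w_n=R_0(z_n)f_n-R_0^+(\lambda)f_n$, where $H_0w_n=z_nR_0(z_n)f_n-\lambda R_0^+(\lambda)f_n$ contains no $f_n-f$ term; the remaining pieces converge via the compactness of $H^p_{-s'}\hookrightarrow L^2_{-s}$ and $H^p_{-s}\hookrightarrow H^{p-1}_{-s-1}$. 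Finally, a minor misattribution: Hypothesis~\ref{hyp:H0H}(2.) is not used in this proof and does not ``extend the same bound to real $\lambda$'' (its weight $s-1-\epsilon$ is in general weaker than $-s$ when $s$ is close to $\tfrac12$); the uniform bound for $z$ approaching the real axis already follows from (1.) together with the boundary-value argument.
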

\begin{proof}
We mainly follow the structure of \cite[Theorem 4.1]{ASNSP_1975_4_2_2_151_0}.
We prove the theorem for $R^+_0(\lambda)$ as the proof for $R^-_0(\lambda)$ is similar.
Let $f, g\in L_s^2$. We first show that the function 
$$F(z)=(R_0(z)f, g),$$
which is an analytic function of $z$ in $\rJ(a,b)$, has continuous boundary values on both edges of $(a, b)$. It suffices to prove this for $f, g$ in a dense subset $C_c^\infty(\mathbb{R}^2)$ of $L_s^2$. 

By Lemma \ref{lmm:fourier_transform}, $(R_0(z)f, g)$ can be represented as
\begin{gather}\label{eq:inner}
    (R_0(z)f, g)=\sum_{j\in J}\left(\frac{\hat{f}_{j}(\xi)}{E_j(\xi)-z}, \hat{g}_j(\xi)\right).
\end{gather}
By condition (ii.) in Hypothesis \ref{hyp:H1}, at most finitely many branches $j\mapsto E_j(\xi)$ cross $\lambda\in\Rm$. We denote the finite set consisting of these indices $j$ by $M_0$ (of cardinality ${\rm M}(\lambda)$) and $\xi_j$ to be the roots such that 
$$E_j(\xi_j)=\lambda\ \text{for}\  j\in M_0.$$
For $j\in J \setminus M_0$ and $\xi\in\mathbb{R}$, $|E_j(\xi)-\lambda|$ is uniformly bounded below by a positive constant, which can be inferred from condition (ii.) in Hypothesis \ref{hyp:H1}. Thus we take the limit $z\to\lambda+0i$ in the integral in the last term in  for $j\in J \setminus M_0$. 
\begin{gather}
    \lim_{z\to\lambda+0i}\sum_{j\in J \setminus M_0}\Big(\frac{\hat{f}_{j}(\xi)}{E_j(\xi)-z}, \hat{g}_{j}(\xi)\Big)
    =\sum_{j\in J \setminus M_0} \Big(\frac{\hat{f}_{j}(\xi)}{E_j(\xi)-\lambda}, \hat{g}_{j}(\xi)\Big)
    %\frac{(\hat{f}_j, \hat{g}_{j})_{L^2}}{E_j(\xi)-\lambda}. %%% Makes no sense.
\end{gather}
For $j\in M_0$, we have %$|\hat{g}_{n,j}(\xi)|^2$ \gb{Defined how?} converge uniformly with respect to $\xi$, so that for $j\in M_0$,
\begin{gather}
\lim_{z\to\lambda+0i}\left(\frac{\hat{f}_{j}(\xi)}{E_j(\xi)-z}, \hat{g}_{j}(\xi)\right)=\frac{\pi i}{a_j}\hat{f}_{j}(\xi_j)\bar{\hat{g}}_j(\xi_j)+p.v.\left(\frac{\hat{f}_{j}(\xi)}{E_j(\xi)-\lambda}, \hat{g}_{j}(\xi)\right), 
\end{gather}
as an application of the Plemelj formula, where $a_j=\lim_{\xi\to\xi_j}\frac{E_j(\xi)-\lambda}{\xi-\xi_j} = \partial_\xi E_j(\xi_j)\in\mathbb{R}\backslash0$ exists 
and the last integral is defined and being taken in the principal value sense. Combining two limits together, we obtain that \eqref{eq:inner} admits a continuous boundary value at $\lambda+0i$:
\begin{gather*}
    \lim_{z\to\lambda+0i}\aver{R_0(z)f, g}=\sum_{j\in M_0} i\pi\frac{\hat{f}_j(\xi_j)\bar{\hat{g}}_j(\xi_j)}{a_j}
    + \sum_{j\in J} p.v.\Big(\frac{\hat{f}_{j}(\xi)}{E_j(\xi)-\lambda}, \hat{g}_{j}(\xi)\Big),
   % +\sum_{j\in J\setminus M_0}\frac{(\hat{f}_j(\xi), \hat{g}_{j}(\xi))_{L^2}}{E_j(\xi)-\lambda}.
\end{gather*}
where the above terms for $j\in J\backslash M_0$ are standard (Lebesgue) integrals.
%%%%\gb{Check formula. Do we need the p.v. part or is it gone here? Do we need sum over all branches as well? We should: it is the imaginary part that only involves the terms $\hat f_j$.} %% NOW SEEMS OK
Note that
\[
  \Im  \lim_{z\to\lambda\pm0i}\aver{R_0(z)f, f}= \sum_{j\in M_0} \frac{\pm\pi}{\partial_\xi E_j(\xi_j)} |\hat f_j(\xi_j)|^2.
\]
%\gb{Why do we have a factor $1/2$ and drop the rest of the branch? Factor $1/2$ should be there above. No need to repeat maybe. We need the pv part as it corresponds to the rest of the branch.}

This in turn means that for any $f\in L_s^2$, there exists a weak limit of $R_0(z)f$ in $L_s^2$ as $z\to\lambda+0i$. Moreover, since the unit ball in $H_{-s}^p$ is weakly compact, we may also assume that this limit is the weak limit of $R_0(z)f$ in $H_{-s}^p$. We denote this limit by $u$ and we define the operator $R_0^+(\lambda)$ by
\begin{gather}\label{def:R_0}
    R^+_0(\lambda)f=u.
\end{gather}
By Lemma \ref{rmk:cpt}, $H_{-s}^p\xhookrightarrow{} H_{-s-1}^{p-1}$ is compact, and $\{R_0(z)f\}_{z\in \rJ_+(a,b)}$ is bounded in $H_{-s}^p$ by condition (1.) in the Hypothesis \ref{hyp:H0H}, so $u$ is also the limit in $H_{-s-1}^{p-1}$. By Lemma \ref{lmm:hp}, $R_0(z)f$ converges locally to $u$ in $H^p$, which implies that
\begin{gather}\label{eq:h0}
    (H_0-\lambda)R^+_0(\lambda)f=f.
\end{gather}
Moreover, since $\{R_0(z)\}_{z\in \rJ_+(a,b)}$ is bounded in ${\mathcal B}(L_s^2, H_{-s}^p)$, then indeed $R^+_0(\lambda)\in{\mathcal B}(L_s^2, H_{-s}^p)$.

For $f\in L_s^2$, we wish to show next that
\begin{gather}\label{claim:r0}
    R_0(z)f\to R^+_0(\lambda)f\ \text{in}\ H_{-s}^p.
\end{gather}
To see this, we first observe that for $u\in H^p_{-s}$,
\begin{gather}\label{eq:norm}
    \|u\|_{H_{-s}^p}=\|\aver{x}^{-s}u\|_{H^p}\leq \|(H_0-i)(\aver{x}^{-s}u)\|_{L^2}\leq \|H_0(\aver{x}^{-s}u)\|_{L^2}+\|u\|_{L_{-s}^2}.
\end{gather}
By \eqref{eq:h0u}, $H_0(\aver{x}^{-s}u)=\aver{x}^{-s}H_0u+\sum_{j=1}^p\frac{d^j}{dx^j}\aver{x}^{-s}\cdot H'_{p-j}(u)$,  where $H^j$ is a differential operator of order at most $j$. Thus
$
    \|H_0(\aver{x}^{-s})u\|_{L^2}\leq \|H_0u\|_{L_{-s}^2}+C_0\|u\|_{H_{-s-1}^{p-1}}
$
where $C_0>0$ is independent of $u$. Together with \eqref{eq:norm}, it follows that
\begin{gather}\label{ineq:u}
    \|u\|_{H_{-s}^p}\leq \|u\|_{L_{-s}^2}+\|H_0u\|_{L_{-s}^2}+C_0\|u\|_{H_{-s-1}^{p-1}}.
\end{gather}

Now if \eqref{claim:r0} is not true, then there is $\epsilon>0$ and a sequence $\{z_n\}$ with $z_n\to\lambda+0i$ such that
\begin{gather}\label{assm:epsilon1}
    \|R_0(z_n)f-R^+_0(\lambda)f\|_{H_{-s}^p}>\epsilon.
\end{gather}
Noting that $\{R_0(z_n)f\}$ is bounded in $H_{-s'}^p$ for $\frac{1}{2}<s' < s$ and using the compactness of the injection maps $H_{-s'}^p\xhookrightarrow{} L_{-s}^2$ and $H_{-s}^p\xhookrightarrow{} H_{-s-1}^{p-1}$ by Lemma \ref{rmk:cpt}, we can choose a subsequence of $\{z_n\}$ (still denoted by $\{z_n\}$) so that $R_0(z_n)f\to R_0^+(\lambda)f$ in $L_{-s}^2$ and in $H_{-s-1}^{p-1}$. Since $$H_0R_0(z_n)f=f+z_nR_0(z_n)f,\ H_0R_0^+(\lambda)f=f+\lambda R^+_0(\lambda)f,$$
it follows from \eqref{ineq:u} that $R_0(z_n)f\to R^+_0(\lambda)f$ in $H_{-s}^p$, which contradicts with \eqref{assm:epsilon1}. This concludes the claim \eqref{claim:r0}.

Finally we want to show that $R_0(z)\to R^+_0(\lambda)$ in the uniform topology of ${\mathcal B}(L_s^2, H_{-s}^p)$. Supposing this is not true, there exists $\epsilon>0$, a sequence $z_n\to\lambda+0i$, and a sequence $f_n$ with $\|f_n\|_{L_s^2}=1$ and $f_n\rightharpoonup f$ in $L_s^2$ such that
\begin{gather}\label{assm:epsilon}
    \|R_0(z_n)f_n-R_0^+(\lambda)f_n\|_{H_{-s}^p}> \epsilon.
\end{gather}
Indeed, for any $g\in L_s^2$, we have
$
    \lim_{n\to\infty}\aver{R_0(z_n)f_n, g}=\lim_{n\to\infty}\aver{f_n, R_0(\bar{z}_n)g}=(f,R^-_0(\lambda)g)=(R^+_0(\lambda)f, g).
$
Thus, 
\begin{gather}\label{lim:weak}
    R_0(z_n)f_n\rightharpoonup R^+_0(\lambda)f\ \text{in}\ L_{-s}^2.
\end{gather}
$\{R_0(z_n)f_n\}$ is bounded in $H_{-s'}^p$ for $\frac{1}{2}<s' <  s$, using the compactness of $H_{-s'}^p$ into $L_{-s}^2$ and $H_{-s}^p$ into $H_{-s-1}^{p-1}$, we may assume that $R_0(z_n)f_n$ converges in $L_{-s}^2$ and $H_{-s-1}^{p-1}$. It follows from \eqref{ineq:u} that $R_0(z_n)f_n$ converges in $H_{-s}^p$. Hence by \eqref{lim:weak}, $R_0(z_n)f_n$ indeed converges to $R^+_0(\lambda)f$. We can use the same argument to obtain that $R_0^+(\lambda)f_n$ also converges to $R^+_0(\lambda)f$, which contradicts \eqref{assm:epsilon}. This concludes the result that $R_0(z)\to R^+_0(\lambda)$ in the uniform topology of ${\mathcal B}(L_{-s}^2, H_{-s}^p)$.
\end{proof}
%\gb{Sounds good. I'll have a closer look but this seems fine. Probably quite redundant with Agmon.}

%\gb{We need to add hypotheses on $Q$ now.}
We now prove that outgoing (or incoming) solutions of $Hu=\lambda u$ for $\lambda\in\Rm$ are in fact eigenvectors of $H$.
\begin{lemma}\label{lmm:outgoing}
Assume that $Q$ satisfies \eqref{q}in Hyp. \ref{hyp:Q}.
Let $u=R^+_0(\lambda)f\in H^p_{-s}$ (resp. $u=R^-_0(\lambda)f$) for $f\in L_s^2$ satisfy
$
    Hu=\lambda u.
$
Then $u\in L^2$ and in fact $u\in L^2_t$ for any $t\in\Rm$.% and hence $\lambda$ belongs to the point spectrum of $H$.
\end{lemma}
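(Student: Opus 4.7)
Since $u = R_0^+(\lambda)f$ solves $(H_0-\lambda)u = f$ while simultaneously satisfying $Hu = \lambda u = (H_0+Q)u$, we obtain the self-referential identity $f = -Qu$. This fact is exploited twice: first, combined with self-adjointness of $Q$, to force the Fourier coefficients of $f$ to vanish at every resonance wavenumber (a radiation-condition argument), which upgrades $u$ to $L^2$; second, to bootstrap polynomial decay via Hypothesis \ref{hyp:H0H}(2.).

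\textbf{Radiation condition and vanishing at resonances.} Because $u \in H^p_{-s} \subset L^2_{-s}$ and $Qu \in L^2_{h-s}$ with $h-s > s$ (using $h > 2s$), the pairing $(u, Qu)$ is well-defined. Self-adjointness of $Q$ gives $(u, Qu) = (Qu, u) = \overline{(u, Qu)}$, hence $(u, Qu) \in \Rm$ and therefore $\operatorname{Im}(u, f) = -\operatorname{Im}(u, Qu) = 0$. On the other hand $(u, f) = (R_0^+(\lambda)f, f)$, and Lemma \ref{lmm:fourier_transform} combined with the Sokhotski--Plemelj calculation from the proof of Theorem \ref{thm:pla} yields
\[
 \operatorname{Im}(R_0^+(\lambda) f, f) \ = \ \pi \sum_{j \in M_0}\ \sum_{\xi_j:\,E_j(\xi_j)=\lambda} \frac{|\hat f_j(\xi_j)|^2}{|\partial_\xi E_j(\xi_j)|}.
\]
Since $\lambda \notin Z$ ensures every $|\partial_\xi E_j(\xi_j)|$ is strictly positive, the vanishing of this non-negative sum forces $\hat f_j(\xi_j) = 0$ at every resonance wavenumber.

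\textbf{Bootstrap to $L^2_t$ for every $t$.} The Fourier representation $\hat u_j(\xi) = \hat f_j(\xi)/(E_j(\xi)-\lambda-i0)$ has Plemelj delta-contributions at each $\xi_j$ proportional to $\hat f_j(\xi_j)$; these now vanish. Since $\hat f_j$ (inherited from $f = -Qu \in L^2_{h-s}$) enjoys Sobolev regularity $H^{h-s}(\Rm_\xi)$ with $h-s>1/2$, while $E_j(\xi)-\lambda$ has a simple zero at $\xi_j$, the remaining principal-value quotient is square-integrable near each resonance (with a preliminary sub-iteration if the Hölder exponent is marginal; see the obstacle paragraph). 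Combined with the lower bound on $|E_j-\lambda|$ away from resonances and Plancherel, this yields $u \in L^2(\Rm^2)$. Once $u \in L^2$, ellipticity of $H_0$ places $u \in H^p$, so $(H_0-\lambda)u = -Qu$ with $\|Qu\|_{L^2_{t+h}} \leq C\|u\|_{L^2_t}$ feeds into Hypothesis \ref{hyp:H0H}(2.), giving $u \in H^p_{t+h-1-\epsilon}$ whenever $u \in L^2_t$. Because $h-1-\epsilon > 0$ for $\epsilon \in (0,h-1)$, iterating raises the decay index $t$ without bound, hence $u \in L^2_t$ for every $t \in \Rm$. The incoming case $u = R_0^-(\lambda)f$ is identical: the $-i0$ Plemelj prescription flips the sign of the resonance sum, but the identity $\operatorname{Im}(u,f)=0$ still forces $\hat f_j(\xi_j) = 0$.

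\textbf{Main obstacle.} The delicate step is securing $u \in L^2$ directly from $\hat f_j(\xi_j) = 0$: the available Hölder exponent $h-s-1/2$ of $\hat f_j$ may be small, so the quotient $\hat f_j/(E_j-\lambda)$ is only controlled by $|\xi-\xi_j|^{h-s-3/2}$ near $\xi_j$, which fails to be $L^2$ unless $h-s > 1$. When this fails, one must interleave a mild decay improvement $u \in L^2_{-s'}$ with $s' < s$ (obtained by exploiting the vanishing at resonances to gain $L^2_{\rm loc}$ rather than full $L^2$ membership, then applying Hypothesis \ref{hyp:H0H}(2.) to $f = -Qu \in L^2_{h-s'}$), regaining extra Sobolev regularity of $\hat f_j$ at each pass. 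After finitely many such rounds the exponent clears the threshold, $u \in L^2$ is reached, and the unrestricted iteration of the previous paragraph takes over.
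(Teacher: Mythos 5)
Your proof follows the paper's own strategy for this lemma: the identity $f=-Qu$, the reality of $(u,Qu)$ from Hermitian $Q$ forcing $\operatorname{Im}(R_0^+(\lambda)f,f)=0$, the Plemelj computation deducing $\hat f_j(\xi_j)=0$ at every resonance, and a Fourier-side bootstrap of the decay of $u$ — all mirror the paper's argument. (Incidentally, writing $|\partial_\xi E_j(\xi_j)|$ rather than the signed derivative is the correct normalization of the delta contribution; the sum of these contributions must be nonnegative for the vanishing argument to go through.)

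Where you diverge is the bootstrap. The paper invokes \cite[Proposition 4.1]{yamada} (see also \cite[Theorem 3.2]{ASNSP_1975_4_2_2_151_0}): if $g\in L^2_\sigma$ with $\sigma>\tfrac12$ and $\hat g(\xi_0)=0$, the inverse transform of $\hat g(\xi)/(\xi-\xi_0)$ lies in $L^2_{\sigma-1}$. Applied to $g=-Qu\in L^2_{h-s}$ this gives $u\in L^2_{h-s-1}$, and each further pass (feeding $u\in L^2_t$ into $f=-Qu\in L^2_{t+h}$ and reapplying the same proposition) lifts the weight by $h-1>0$. After finitely many rounds one reaches $u\in L^2_t$ for any prescribed $t$, staying entirely within the Fourier framework. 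You instead aim to land directly in $L^2$, which as you note requires $h-s>1$; and the sub-iteration you propose to handle the borderline case has a gap: you want to apply Hypothesis~\ref{hyp:H0H}(2.) to $f=-Qu\in L^2_{h-s'}$ before $u\in L^2$ is secured, but that a priori estimate is stated for $u\in H^p=H^p_0$, whereas at that stage you only have $u\in H^p_{-s'}$ (a negatively weighted space, not the unweighted one). So as written the sub-iteration is not justified. The clean repair is to keep iterating the Yamada/Agmon weighted-Fourier estimate until $u\in L^2$ (and in fact all the way to $u\in L^2_t$), exactly as the paper does; once $u\in L^2$ your switch to Hypothesis~\ref{hyp:H0H}(2.) is a valid alternative endgame, just no longer needed.
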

\begin{proof}
We prove the lemma for $u=R^+_0(\lambda)f$; the proof for $u=R^+_0(\lambda)f$ is similar. First note that 
$
    f=(H_0-\lambda)u=(H-Q-\lambda)u=-Qu.
$
By Theorem \ref{thm:pla}, $\underset{z\to\lambda+0i}{\lim}R_0(z)f=u$ in $H_{-s}^p$, so that
$$\underset{z\to\lambda+0i}{\lim}\aver{R_0(z)f, f}=-(u, Qu).$$

By Lemma \ref{lmm:fourier_transform}, 
\begin{gather*}
    (R_0(z)f, f)=\sum_{j\in J}\left(\frac{\hat{f}_{j}(\xi)}{E_j(\xi)-z}, \hat{f}_{j}(\xi)\right).
\end{gather*}
Using the same argument as in the proof of Theorem \ref{thm:pla}, we obtain that %%% \gb{Term from p.v. part missing as earlier above}
\begin{gather}\label{lim:r0}
    \underset{z\to\lambda+0i}{\lim}(R_0(z)f, f)=i\pi\sum_{j\in M_0}\frac{|\hat{f}_{j}(\xi_j)|^2}{a_j}
    + \sum_{j\in J} p.v.\Big(\frac{\hat{f}_{j}(\xi)}{E_j(\xi)-\lambda}, \hat{f}_{j}(\xi)\Big)
    %+\sum_{j\in J}\frac{\|\hat{f}_{j}(\xi)\|_{L^2}}{E_j(\xi)-\lambda}
    = -(u, -Qu).
\end{gather}
Since $Q$ is Hermitian by the Hypothesis \ref{hyp:Q}, $(u, -Qu)$ is a real number. Therefore, 
\begin{gather}\label{eq:nulhatfj}
    \hat{f}_{j}(\xi_j)=0\ \text{for}\ j\in M_0.
\end{gather}

Writing $(H_0-\lambda)u=f$ in the Fourier domain with \eqref{eq:unperturbedxi} yields
\begin{gather*}
\hat{u}_{j}(\xi) \phi_j(y,\xi) =\frac{\hat{f}_{j}(\xi) \phi_j(y,\xi)}{E_j(\xi)-
    \lambda},\ j\in M_0.
\end{gather*}
For $j\in J\setminus M_0$, $\left|\frac{1}{E_j(\xi)-\lambda}\right|$ is uniformly bounded above, so $\widehat{u}_{j}(\xi)\in L^2$ and since $f\in L_s^2$, %the summation for $j\in J \setminus M_0$ is finite:
\begin{gather}\label{ineq:mm}
    \sum_{j\in J \setminus M_0} \|\widehat{u}_{j}(\xi)\|^2_{L^2}<\infty.
\end{gather}
For $j\in M_0$, and since $\partial_\xi E_j(\xi)\not=0$ by assumption that $\lambda\not\in Z$ and $\hat{f}_j(\xi_j)=0$ by \eqref{eq:nulhatfj}, we apply \cite[Proposition 4.1]{yamada} (see also \cite[Theorem 3.2]{ASNSP_1975_4_2_2_151_0}) to deduce that the inverse Fourier transform of $\widehat{u}_{j}(\xi)\phi_j(y;\xi)$ is in $L_{s-1}^2$.   Combined with \eqref{ineq:mm}, we thus proved that $u\in L_{s-1}^2$. Then $f=-Qu\in L_{h+s-1}^2$. From the Hypothesis \ref{hyp:H1}, $h>1$, so we lifted the weight of the H\"older space where $f$ is in by a positive constant $h-1>0$. Repeating this procedure finitely many times implies that $u\in L^2$, and in fact in $L^2_t$ for all $t\in\Rm$ so that eigenvectors in fact decay faster than algebraically as $|x|\to\infty$. This concludes the proof of the lemma.
\end{proof}

Now we are able to prove the principle of limiting absorption for $H=H_0+Q$. We recall that $Z_H$ is the union of the (discrete) critical set $Z$ and the point spectrum $\{\lambda_n\}_n$ of $H$, which is discrete away from $Z$. 
\begin{theorem}[Principle of limiting absorption] \label{thm:pla1}
Let $R(z)=(H-z)^{-1}$ be the resolvent of $H$. Let $a,b\in\Rm$ such that $[a,b]\cap Z_H=\emptyset$. For $1<2s<h$ and $\lambda\in (a,b)$, the following limits exist in the uniform operator topology of ${\mathcal B}(L_s^2, H_{-s}^p)$:
\begin{gather*}
    \lim_{z\to\lambda\pm 0i} R(z)=R^\pm(\lambda).
\end{gather*}
Moreover, for all $f\in L_s^2$, $u^\pm=R^\pm(\lambda)f$ are solutions of the equation 
\begin{gather*}
    (H-\lambda)u(x)=f(x).
\end{gather*}
\end{theorem}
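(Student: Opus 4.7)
The plan is to follow the standard Agmon-type strategy: express $R(z)$ through $R_0(z)$ via the second resolvent identity, recognize that the perturbative piece is compact on $H^p_{-s}$, and pass to the boundary using the principle of limiting absorption for $H_0$ (Theorem~\ref{thm:pla}) together with the Fredholm alternative.

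Concretely, I would first write, for $\operatorname{Im} z \neq 0$,
\[
R(z) = (I + R_0(z) Q)^{-1} R_0(z),
\]
which follows from the resolvent identity and the boundedness of $Q$. For this formula to remain meaningful in the limit $z \to \lambda \pm 0i$, I need the operator $R_0^\pm(\lambda) Q$ to be well-defined and invertibility of $I + R_0^\pm(\lambda) Q$ to hold on $H^p_{-s}$. The decay hypothesis $h > 2s > 1$ is precisely what enables this: multiplication by $Q$ maps $L^2_{-s-\epsilon} \to L^2_s$ boundedly for any $0 < \epsilon \leq h - 2s$, because $\aver{x}^{2s}|Q|^2 \leq C \aver{x}^{2s-2h} \leq C\aver{x}^{-2s-2\epsilon}$. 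Combined with the compact embedding $H^p_{-s} \hookrightarrow H^{p-1}_{-s-\epsilon}$ from Lemma~\ref{rmk:cpt} and the boundedness of $R_0^\pm(\lambda): L^2_s \to H^p_{-s}$ from Theorem~\ref{thm:pla}, the composition
\[
H^p_{-s} \xhookrightarrow{\text{cpt}} H^{p-1}_{-s-\epsilon} \hookrightarrow L^2_{-s-\epsilon} \xrightarrow{Q} L^2_s \xrightarrow{R_0^\pm(\lambda)} H^p_{-s}
\]
shows that $R_0^\pm(\lambda) Q$ is a compact operator on $H^p_{-s}$, and similarly for $R_0(z)Q$ with $z \in \rJ(a,b)$.

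Next I would verify injectivity of $I + R_0^\pm(\lambda) Q$. If $u \in H^p_{-s}$ satisfies $u + R_0^\pm(\lambda) Q u = 0$, then setting $f = -Q u \in L^2_s$, we have $u = R_0^\pm(\lambda) f$ and $(H_0 - \lambda) u = -Qu$, i.e., $(H - \lambda) u = 0$. Lemma~\ref{lmm:outgoing} then forces $u \in L^2$ (in fact in $L^2_t$ for every $t$), so $u$ would be an eigenvector of $H$ with eigenvalue $\lambda \in (a,b)$; but $[a,b] \cap Z_H = \emptyset$, so $u = 0$. By the Fredholm alternative (applied to $I + \text{compact}$), $I + R_0^\pm(\lambda) Q$ is invertible in $\mathcal{B}(H^p_{-s})$, and the same is true for $I + R_0(z) Q$ for $z$ close to $\lambda \pm 0i$, since the set of invertible operators is open.

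Finally, to obtain the limit in the uniform topology of $\mathcal{B}(L^2_s, H^p_{-s})$, I would use Theorem~\ref{thm:pla}: $R_0(z) \to R_0^\pm(\lambda)$ in $\mathcal{B}(L^2_s, H^p_{-s})$ as $z \to \lambda \pm 0i$. Composing on the right with the bounded multiplication $Q: H^p_{-s} \to L^2_s$ yields $R_0(z) Q \to R_0^\pm(\lambda) Q$ in $\mathcal{B}(H^p_{-s})$, and continuity of inversion in the Banach algebra of bounded operators gives $(I + R_0(z) Q)^{-1} \to (I + R_0^\pm(\lambda) Q)^{-1}$ in $\mathcal{B}(H^p_{-s})$. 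Multiplying by $R_0(z) \to R_0^\pm(\lambda)$ yields $R(z) \to R^\pm(\lambda) := (I + R_0^\pm(\lambda) Q)^{-1} R_0^\pm(\lambda)$ in $\mathcal{B}(L^2_s, H^p_{-s})$. That $u^\pm = R^\pm(\lambda) f$ solves $(H - \lambda) u = f$ follows by unwinding the definition: $(I + R_0^\pm Q) u^\pm = R_0^\pm f$, so $(H_0 - \lambda) u^\pm = f - Q u^\pm$, i.e., $(H - \lambda) u^\pm = f$. The main obstacle is step (i)–(ii), where the compactness argument must use the decay rate $h > 2s$ to gain room for a compact embedding, and the exclusion of eigenvalues in $[a,b]$ must be leveraged via Lemma~\ref{lmm:outgoing} to promote an apparent kernel element of $I + R_0^\pm(\lambda) Q$ from $H^p_{-s}$ to an honest $L^2$-eigenvector.
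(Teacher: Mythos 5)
Your proposal is correct and follows essentially the same route as the paper: write $R(z)=(I+R_0(z)Q)^{-1}R_0(z)$ via the second resolvent identity, establish compactness of $R_0(z)Q$ on $H^p_{-s}$ through a chain of weighted Sobolev embeddings (using the decay $h>2s$ together with Lemma~\ref{rmk:cpt}), use Lemma~\ref{lmm:outgoing} plus $[a,b]\cap Z_H=\emptyset$ to rule out $-1$ being an eigenvalue of $R_0^\pm(\lambda)Q$, invoke the Fredholm alternative, and pass to the limit by combining Theorem~\ref{thm:pla} with continuity of Banach-algebra inversion. The only cosmetic difference is that the paper packages the two cases (boundary value vs.\ interior) into a single operator $T(z)$ defined on $\overline{\rJ_+(a,b)}$ and establishes invertibility for $\operatorname{Im}z>0$ by a surjectivity argument, whereas you handle interior $z$ by noting that invertible operators form an open set; both are standard and interchangeable here.
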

\begin{proof}
We closely follow the proof of \cite[Theorem 4.2]{ASNSP_1975_4_2_2_151_0}, the main difference being our use of Lemma \ref{lmm:outgoing}. We prove the theorem for $R^+(\lambda)$ as the proof for $R^-(\lambda)$ is similar. For $z\in \overline{\rJ_+(a,b)}$, we define $T(z)$ by
\begin{gather}
     H^p_{-s} \ni  u \mapsto T(z)u=\begin{cases}
    R_0^+(z)Qu, & \text{if $z\in (a,b)$},\\
    R_0(z)Qu, & \text{if Im z}>0.
    \end{cases}
    %,\ u\in H_{-s}^p.
\end{gather}
By assumption \ref{hyp:Q}, $Q$ maps $H^p_{-s}$ to $L^2_{s+\eps}$ for some $\eps>0$ when $1<2s<h$ while $R_0^+(z), R_0(z)\in \mB(L^2_{s+\eps},H^p_{-s-\eps})$ so that by Lemma \ref{rmk:cpt}, $T(z)$ is continuous on $\overline{\rJ_+(a,b)}$ in the uniform topology of ${\mathcal B}(H_{-s}^p, H_{-s}^p)$ and in fact compact from $H_{-s}^p$ to $H_{-s}^p$ for all $z\in \overline{\rJ_+(a,b)}$.

Now we claim that $(I+T(z))^{-1}$ exists for all $z\in \overline{\rJ_+(a,b)}$. For $z\in \rJ_+(a,b)$, by the Fredholm alternative for compact operators \cite[Theorem 6.6]{Brezis2010FunctionalAS}, this is equivalent to the surjectivity of $(I+T(z))$. For $f\in L^2$, $u=R(z)f$ is equivalent to
$$(I+T(z))u=R_0(z)f.$$
As an operator on $L^2$, then $\text{Ran}\ R_0(z)=H^p$ so that $H^p\subset \text{Ran}\ (I+T(z))$ and hence $\text{Ran}\ (I+T(z))=\overline{\text{Ran}\ (I+T(z))}=H_{-s}^p$. It follows that $(I+T(z))^{-1}$ exists in ${\mathcal B}(H_{-s}^p, H_{-s}^p)$.

Let $z=\lambda\in (a,b)$. By the Fredholm alternative $I+T(\lambda)$ is invertible if and only if $-1$ is not an eigenvalue of $T(\lambda)$. Suppose on the contrary that $-1$ is an eigenvalue of $T(\lambda)$ and let $u\in H_{-s}^p$ be the corresponding eigenfunction. It follows that $u=-R^+_0(\lambda)Qu$, which implies that $Hu=\lambda u$. By Lemma \ref{lmm:outgoing}, this implies that $u\in L^2$ so $\lambda$ is an eigenvalue of $H$, which is a contradiction since $[a,b]\cap Z_H=\emptyset$. Thus we conclude that $I+T(\lambda)$ is invertible.

Now since $T(z)$ is continuous on $\overline{\rJ_+(a,b)}$ in the uniform topology of ${\mathcal B}(H_{-s}^p, H_{-s}^p)$, it follows that the operator $(I+T(z))^{-1}$ is also continuous on $\overline{\rJ_+(a,b)}$ in the uniform operator topology of ${\mathcal B}(H_{-s}^p, H_{-s}^p)$. Note that
\begin{gather*}
    R(z)=(I+T(z))^{-1}R_0(z),\ z\in \rJ_+(a,b).
\end{gather*}
Since $R_0(z)$ is also continuous on $\overline{\rJ_+(a,b)}$ in the uniform operator topology of ${\mathcal B}(H_{-s}^p, H_{-s}^p)$ by Theorem \ref{thm:pla}, we have for $\lambda\in (a,b)$ that
$
    \lim_{z\to\lambda+0i} R^+(z)=(I+T(\lambda))^{-1}R^+_0(\lambda)
$
in the uniform topology of ${\mathcal B}(L_s^2, H_{-s}^p)$. Finally, letting $z\to\lambda +0i$ in 
$R(z)+R_0(z)QR(z)=R_0(z)$, we obtain that 
$$R^+(\lambda)+R_0^+(\lambda)QR^+(\lambda)=R^+_0(\lambda),$$
and it follows that $(H-z)R^+(\lambda)f=f$.
\end{proof}
\begin{remark}[Short-range perturbation] \label{rem:sr} In the above proofs, we only used the property of $Q$ that it maps $L^2_{-s}$ to $L^2_{s+\eps}$ continuously for $s\in\Rm$ (and in fact for some $\frac12<s<s_0$ to obtain that $u\in L^2$ in Lemma \ref{lmm:outgoing}). Such perturbations are called {\em short-range}.
\end{remark}

Our final result is the following:
\begin{theorem}\label{thm:absolute_cont}
$H$ does not have singular continuous spectrum.
\end{theorem}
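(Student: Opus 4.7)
The plan is to use Stone's formula to express the spectral measure of $H$ in terms of the boundary values of its resolvent, and then invoke the limiting absorption principle (Theorem \ref{thm:pla1}) to conclude absolute continuity of the spectral measure on the complement of the negligible set $Z_H$. The singular continuous part, being both atomless and concentrated where no absolutely continuous mass can live, must then vanish.

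Concretely, first I would fix $f\in L^2_s$ with $\tfrac12<s<h/2$; such $f$ form a dense subspace of $\mH=L^2$. For any interval $(a,b)$ with $[a,b]\cap Z_H=\emptyset$, Stone's formula gives
\[
  \big(E_H((a,b))f,f\big) = \frac{1}{2\pi i}\int_a^b \lim_{\eps\downarrow 0}\Big[\big(R(\lambda+i\eps)f,f\big) - \big(R(\lambda-i\eps)f,f\big)\Big]\,d\lambda,
\]
where $E_H$ denotes the spectral projection for $H$. By Theorem \ref{thm:pla1}, the limits $R(\lambda\pm i0)f$ exist in $H^p_{-s}$ uniformly on $[a,b]$ in the uniform operator topology of $\mathcal B(L^2_s,H^p_{-s})$, so the map $\lambda\mapsto (R(\lambda\pm i0)f,f)$ is a bounded continuous function on $[a,b]$. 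This shows that the scalar spectral measure $\mu_f$ of $f$ restricted to $(a,b)$ has a bounded continuous Lebesgue density, hence is absolutely continuous.

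Next I would assemble these local statements. By [H1(iii)] the critical set $Z$ is finite on every bounded interval, and by Proposition \ref{prop:eigenvalue} the point spectrum of $H$ is discrete away from $Z$, so $Z_H$ is at most countable. Covering $\Rm\setminus Z_H$ by at most countably many open intervals each disjoint from $Z_H$ and applying the previous paragraph on each, I conclude that $\mu_f\big|_{\Rm\setminus Z_H}$ is absolutely continuous. The singular continuous component $\mu_f^{\mathrm{sc}}$ is by definition atomless and mutually singular with Lebesgue measure; hence $\mu_f^{\mathrm{sc}}(\Rm\setminus Z_H)=0$ from the absolute continuity above, while $\mu_f^{\mathrm{sc}}(Z_H)=0$ because $Z_H$ is countable and $\mu_f^{\mathrm{sc}}$ carries no point masses. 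Thus $\mu_f^{\mathrm{sc}}\equiv 0$ for every $f$ in the dense set $L^2_s$, and since the singular continuous subspace $\mH_{\mathrm{sc}}$ of $\mH$ is a closed $H$-invariant subspace, a standard density argument gives $\mH_{\mathrm{sc}}=\{0\}$.

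The main technical point I expect to be mildly delicate is the justification of the interchange of limit and integral inside Stone's formula. This is handled routinely: the convergence $R(\lambda\pm i\eps)f\to R(\lambda\pm i0)f$ takes place uniformly in $\lambda\in[a,b]$ in $H^p_{-s}$ thanks to the uniform operator topology statement of Theorem \ref{thm:pla1}, so the integrands are uniformly bounded in $\lambda$ and converge uniformly, and dominated convergence applies without further effort. All other steps are standard spectral-theoretic bookkeeping.
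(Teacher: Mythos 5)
Your proposal is correct and is essentially the same argument as the paper's: both use the limiting absorption principle (Theorem~\ref{thm:pla1}) to show that the spectral measure is absolutely continuous on intervals disjoint from $Z_H$, and then observe that the locally finite/discrete exceptional set $Z_H$ cannot support singular continuous spectrum. The paper phrases the local step as a Lipschitz bound $|(E(a)f,f)-(E(b)f,f)|\le C(b-a)\|f\|_{L^2_s}$ obtained from Kato's resolvent formula, whereas you phrase it via Stone's formula and continuity of the boundary values; these are cosmetic variations of the same idea, and your additional remarks on dominated convergence and the density argument for $\mH_{\mathrm{sc}}=\{0\}$ are both sound and consistent with what the paper leaves implicit.
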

\begin{proof}
%\cite[Corollary 4.2]{dirac}
The proof is an adaptation of, e.g., \cite[Corollary 4.2]{yamada}. Let $E(\lambda)$ be the right-continuous resolution of the identity associated with the self-adjoint operator $H$. It suffices to show that $\lambda\mapsto (E(\lambda)f,f)$ for $f\in L_s^2$ is continuous when $\lambda\in\mathbb{R}$ is not an eigenvalue.  
First we assume that $$k_n<\alpha<\beta<k_{n+1}$$ for some $n\in\mathbb{Z}$, and $[\alpha,\beta]$ does not contain any eigenvalue, where $\{k_n\}$ label elements in $Z_H$, the union of $Z$ defined in \eqref{eq:Z} and the discrete spectrum of $H$. 
%are labels defined in \eqref{kn}.
For arbitrary $a,b$ with $\alpha<a<b<\beta$, we have the following relation using, e.g., \cite[(5.32)]{kato2013perturbation}, 
\begin{gather*}
\frac{1}{2}[(E(a)+E(a-0))f,f]-\frac{1}{2}[(E(b)+E(b-0))f,f]\\
=\frac{1}{2\pi i}\ \underset{\eta\to 0}{\lim}\int_a^b((H-\mu-i\eta)^{-1}f-(H-\mu+i\eta)^{-1}f,f)d\mu,
\end{gather*}
so that
\begin{gather}\label{eaeb}
|(E(a)f,f)-(E(b)f,f)|=\frac{1}{2\pi i}\int_a^b(u^+(\mu,f)-u^-(\mu,f),f)d\mu.
\end{gather}
Then we deduce from Theorem \ref{thm:pla} that
$
    |(E(a)f,f)-(E(b)f,f)|\leq C(b-a)\|f\|_{L_s^2},
$
where $C$ is a constant only depending on $\alpha,\beta$ and $s$, which shows the desired absolute continuity on $(\alpha,\beta)$. Finally, since $Z_H$  is a discrete set and the singular continuous spectrum cannot be supported on a discrete set, we conclude that $H$ does not have any singular continuous spectrum.
\end{proof}

\section{Generalized eigenfunction expansion} \label{sec:eigexp}

This section presents a proof of Theorem \ref{thm:EET}, based on the results obtained in Theorem \ref{thm:lap}, and in particular the construction of generalized eigenfunctions $\psi_j^Q(\cdot;\xi)$ for $\fco=(j,\xi)\in J\times \Rm$. From Theorem \ref{thm:lap},  $R(z)=(H-z)^{-1}$ is well defined on $\mH$ and bounded uniformly for $z\in \rJ(a,b)$ away from $Z_H$ so that we have the bounded operators:
\[R^\pm(\lambda)=(H-(\lambda\pm i0))^{-1}.\]  

For $\fco=(j,\xi)\in J\times\Rm$, we defined $\psi_j(x,y;\xi)$ in \eqref{eq:unperturbedxi}.  For $z\in \rJ(a,b)$, we recall that $A_\fco(z)=(I-R(z)Q)\psi_j(\xi)$  is defined \eqref{eq:Az}  and that for 
$f\in L^2_s$ with $s>\frac12$, $A^*_\fco f(z) =(f,A_\fco(z))$ in \eqref{eq:Azstar}. This allows us to define $\psi_j^\pm(\xi) = A_\fco(E_j(\xi)\pm i0) = (I-R(E_j(\xi)\pm i0)Q)\psi_j(\xi)$ in \eqref{eq:perturbedxi} and for concreteness, $\psi_j^Q(x,y;\xi)=\psi_j^+(x,y;\xi)$, the {\em outgoing} generalized eigenfunctions, while $\psi_j^-(x,y;\xi)$ corresponds to {\em incoming} generalized eigenfunctions.

Thanks to Theorem \ref{thm:pla}, we observe that $\psi_j^\pm(\xi)\in H^p_{-s}(\Rm^2)$. For each $E\in(a,b)$ with $[a,b]\cap Z_H=\emptyset$, there is a finite number of wavenumbers $\xi_m$ such that $E=E_m(\xi_m)$. We denote by $\psi_m^\pm(E)$ the corresponding generalized eigenfunctions parametrized by $(m,E)$ rather than $(j,\xi)$.

 Let $f\in L_s^2$ and $j\in J$ with $\xi\in \Xi_j$ so that $E_j(\xi)\in (E_-,E_+)\backslash Z_H$. We define as in \eqref{def:fourier}-\eqref{def:fourierdisc} the generalized Fourier transform(s)
\[
    \tilde{f}^\pm(\fco)=A_\fco^*(E_j(\xi)\pm 0i)f,\qquad \tilde{f}_n = (f,\phi_n).
\]
We first verify the following estimate on the generalized Fourier transform:
\begin{lemma}\label{lmm:a_star}
For $\frac{h}{2}>s>\frac{1}{2}$, there exists a constant $C=C(s,a,b)$ such that
\begin{gather}
    |A^*_\fco(z)f|\leq C\|f\|_{L_s^2}
\end{gather}
for all $\fco\in J\times\mathbb{R}$, $z\in \rJ(a,b)$ and $f\in L_s^2$.
\end{lemma}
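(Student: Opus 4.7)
The plan is to split the definition of $A^*_\fco$ into two pieces and estimate each separately using the boundedness of the resolvent from Theorem \ref{thm:pla1} together with the explicit form of the unperturbed generalized eigenfunctions $\psi_j(\xi)$.

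First I would write, using \eqref{eq:Az} and \eqref{eq:Azstar},
\begin{equation*}
A^*_\fco(z) f = (f,\psi_j(\xi)) - (f, R(z) Q \psi_j(\xi)),
\end{equation*}
and treat the two terms in turn. For the first term I would use Cauchy--Schwarz in the dual pairing between $L^2_s$ and $L^2_{-s}$: since $\psi_j(x,y;\xi) = \frac{1}{\sqrt{2\pi}} e^{i\xi x}\phi_j(y;\xi)$ with $\|\phi_j(\cdot;\xi)\|_{L^2(\Rm_y)\otimes\Cm^q}=1$, a direct calculation gives
\begin{equation*}
\|\psi_j(\xi)\|_{L^2_{-s}}^2 = \frac{1}{2\pi}\Big(\int_\Rm \aver{x}^{-2s}dx\Big) \|\phi_j(\xi)\|^2_{L^2_y\otimes\Cm^q} = \frac{1}{2\pi}\int_\Rm \aver{x}^{-2s}dx,
\end{equation*}
which is a finite constant, uniform in $(j,\xi)$, because $2s>1$. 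Hence $|(f,\psi_j(\xi))|\leq C\|f\|_{L^2_s}$.

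For the second term I would use the identity $R(z)^*=R(\bar z)$ (interpreted via the limiting values $R^\pm(\lambda)$ when $z$ is real in $(a,b)$) to rewrite
\begin{equation*}
(f, R(z)Q\psi_j(\xi)) = (R(\bar z) f, Q\psi_j(\xi)).
\end{equation*}
Theorem \ref{thm:pla1} gives $R(\bar z)\in\mB(L^2_s, H^p_{-s})$ with norm bounded uniformly for $\bar z\in\overline{\rJ(a,b)}$, since $[a,b]\cap Z_H=\emptyset$. In particular $\|R(\bar z)f\|_{L^2_{-s}}\leq C\|f\|_{L^2_s}$. On the other hand, the decay hypothesis \eqref{q} together with $|\phi_j|_{L^2_y}=1$ yields
\begin{equation*}
\|Q\psi_j(\xi)\|_{L^2_s}^2 \leq \frac{C^2}{2\pi}\int_\Rm \aver{x}^{2s-2h}dx,
\end{equation*}
which is finite and independent of $(j,\xi)$ since $2s-2h<-h<-1$ by the assumption $\frac{h}{2}>s>\frac{1}{2}$ and $h>1$. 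Applying Cauchy--Schwarz in the $(L^2_{-s}, L^2_s)$ pairing then gives $|(R(\bar z)f,Q\psi_j(\xi))|\leq C\|f\|_{L^2_s}$, again uniformly in $(j,\xi)$ and $z\in \rJ(a,b)$.

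Combining the two bounds concludes the proof. The only subtle point -- and the place where I would be most careful -- is the uniformity in $(j,\xi)$: it relies on the specific structure $\psi_j(x,y;\xi)=\frac{1}{\sqrt{2\pi}}e^{i\xi x}\phi_j(y;\xi)$ with $\phi_j(\xi)$ normalized in $L^2_y\otimes\Cm^q$, so that the $x$-weight integrals decouple from the $(j,\xi)$-dependence. Everything else is a direct consequence of the limiting absorption principle already proved.
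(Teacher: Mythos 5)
Your proof is correct and rests on exactly the same ingredients as the paper's: Cauchy--Schwarz in the $(L^2_s, L^2_{-s})$ pairing, the uniform bound on $R(z)$ from Theorem \ref{thm:pla1}, the decay of $Q$, and the fact that the $x$-weight integrals for $\psi_j(\xi)$ decouple from $(j,\xi)$. The only difference is one of bookkeeping: the paper estimates $\|A_\fco(z)\|_{L^2_{-s}} = \|(I-R(z)Q)\psi_j(\xi)\|_{L^2_{-s}}$ in one piece, using that $Q$ maps $L^2_{-s}$ into $L^2_{h-s}\subset L^2_s$ and then applying $R(z)\in\mB(L^2_s,H^p_{-s})$, whereas you split off the free term and move $R$ onto $f$ via $R(z)^*=R(\bar z)$, bounding $\|Q\psi_j(\xi)\|_{L^2_s}$ instead. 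Both variants require $s<h-\frac12$ (guaranteed by $s<h/2$ and $h>1$), and both are sound; so this is essentially the paper's argument, lightly repackaged.

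One tiny remark: the caveat about interpreting $R(\bar z)$ via the limiting values $R^\pm(\lambda)$ is unnecessary here, since by Definition \ref{def:Jab} every $z\in\rJ(a,b)$ has $\operatorname{Im} z\neq 0$, so $R(\bar z)$ is the ordinary resolvent; the limiting values only enter when one subsequently sends $z\to\lambda\pm i0$.
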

\begin{proof}
By construction, we have
\begin{gather}
    A^*_\fco(z)f= (f, A_\fco(z))_{L^2}=(f, (I-R(z)Q)\psi_{j}(\xi))_{L^2}.
\end{gather}
Multiplication by $Q$ is a continuous map from $L_{-s}^2$ to 
$L_{h-s}^2$
and from Theorem \ref{thm:pla1}, we get
\begin{gather*}
   |(f, (I-R(z)Q)\psi_{j}(\xi))_{L^2}|\leq\|f\|_{L_s^2}\cdot\|(I-R(z)Q)\psi_{j}(\xi))\|_{L_{-s}^2}\\
   \leq C\|f\|_{L_s^2}\cdot\|\psi_{j}(\xi)\|_{L_{-s}^2}\leq C\Big(\int_{\Rm} \aver{x}^{-2s}dx\Big)^{\frac{1}{2}}\|f\|_{L_s^2}
\end{gather*}
and hence the result.
\end{proof}
We next state the following properties of the resolvent operator:
\begin{proposition}
\label{prop:(h-z)a}
For $s>\frac{1}{2}$, $\operatorname{Im}z\neq 0$ and $f\in L_s^2$, we have
\begin{gather}
 \label{eq:(h-z)a}
    (H-z)A_\Xi(z)=(E_j(\xi)-z)\psi_j(\xi),\\
\label{hat_r(z)f}
    \widehat{R(z)f}(\Xi)=\frac{A^*_{\Xi}(\bar{z})f}{E_j(\xi)-\bar{z}},
\end{gather}
where $\hat{f}(\Xi)=(f,\psi_j(\xi))$ is the unperturbed Fourier transform defined in \eqref{eq:FT}. %%%, and the integral in \eqref{lmm:r(z)f} is taken in Bochner's sense.
\end{proposition}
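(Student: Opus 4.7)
The proof splits naturally into an algebraic part and a duality part.

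For \eqref{eq:(h-z)a} my plan is to expand the definition \eqref{eq:Az} as $A_\Xi(z) = \psi_j(\xi) - R(z)Q\psi_j(\xi)$ and apply $H-z$ term by term. Three observations close the calculation. First, $Q\psi_j(\xi)$ belongs to $L^2$: since $\psi_j\in H^p_{-s}$ for some $s>\tfrac12$ and $|Q|\lesssim \aver{x}^{-h}$ with $h>1$ by Hypothesis \ref{hyp:Q}, one gets $Q\psi_j\in L^2_{h-s}$, which lies in $L^2$ for the standard range $h>2s$ used here. Second, since $R(z)$ is the genuine $L^2$-resolvent for $\operatorname{Im}z\ne 0$, $(H-z)R(z)Q\psi_j = Q\psi_j$. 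Third, $H\psi_j - z\psi_j = H_0\psi_j + Q\psi_j - z\psi_j = (E_j(\xi)-z)\psi_j + Q\psi_j$ by Hypothesis \ref{hyp:H1}(i). The two $Q\psi_j$ contributions cancel, giving $(E_j(\xi)-z)\psi_j(\xi)$.

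For \eqref{hat_r(z)f} I plan a duality argument. Take $f\in L^2_s$ with $s>\tfrac12$, so that $R(z)f\in H^p\subset L^2_s$. Writing $(H-z)R(z)f=f$ and pairing both sides with $\psi_j(\xi)\in H^p_{-s}$ in the $(L^2_s,H^p_{-s})$ duality yields
\begin{equation*}
(HR(z)f,\psi_j(\xi)) - z\,\widehat{R(z)f}(\Xi) = \hat f(\Xi).
\end{equation*}
Split $H=H_0+Q$: by self-adjointness of $H_0$ together with $H_0\psi_j(\xi)=E_j(\xi)\psi_j(\xi)$ and reality of $E_j$, one has $(H_0 R(z)f,\psi_j) = E_j(\xi)\,\widehat{R(z)f}(\Xi)$; by Hermiticity of the multiplication operator $Q$, $(QR(z)f,\psi_j)=(R(z)f,Q\psi_j)$. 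Rearranging,
\begin{equation*}
(E_j(\xi)-z)\,\widehat{R(z)f}(\Xi) = \hat f(\Xi) - (R(z)f, Q\psi_j(\xi)).
\end{equation*}
Finally, using $R(z)^*=R(\bar z)$, transfer the resolvent to the first factor: $(R(z)f, Q\psi_j) = (f, R(\bar z)Q\psi_j)$, so the right-hand side becomes $(f,(I-R(\bar z)Q)\psi_j(\xi)) = (f, A_\Xi(\bar z)) = A^*_\Xi(\bar z)f$. Dividing by $E_j(\xi)-z$ produces the stated identity (the identity is self-consistent with Lemma \ref{lmm:fourier_transform} applied in the $Q=0$ case).

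The main obstacle is not algebraic but functional-analytic: every pairing above involves $\psi_j\notin L^2$, so each step must be interpreted in a weighted-space duality rather than in $L^2$. The continuity $Q:H^p_{-s}\to L^2_s$ provided by Hypothesis \ref{hyp:Q} (for $h>2s$) combined with the uniform boundedness $R(z):L^2_s\to H^p_{-s}$ from Theorem \ref{thm:pla1} supplies exactly the framework needed; the formal transfers of $H-\bar z$ and $R(z)$ across the pairing can then be justified by first working with $f,g\in C_c^\infty$ and extending by density and continuity.
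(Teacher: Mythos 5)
Your proof is correct and follows essentially the same route as the paper's. For \eqref{eq:(h-z)a}, both proofs use the same cancellation: $(H-z)A_\Xi(z) = (H-z)\psi_j - Q\psi_j = (H_0-z)\psi_j = (E_j(\xi)-z)\psi_j$, with $(H-z)R(z)Q\psi_j = Q\psi_j$ justified because $Q\psi_j \in L^2$ (your observation $h>2s$ with $s$ taken close to $\tfrac12$ is the same condition used elsewhere, e.g.\ Lemma~\ref{lmm:a_star}). For \eqref{hat_r(z)f}, the paper instead plugs $R(\bar z)f$ into the identity $\hat g(\Xi) = \big(g, (H-z)A_\Xi(z)/(E_j(\xi)-z)\big)$ obtained directly from part one and computes $\widehat{R(\bar z)f}(\Xi)$ in two steps, while you pair $(H-z)R(z)f=f$ with $\psi_j(\xi)$ and split $H=H_0+Q$; these are the same duality argument, just packaged differently, and your version makes the role of $H_0\psi_j=E_j\psi_j$, the Hermiticity of $Q$, and $R(z)^*=R(\bar z)$ more explicit. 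Your remark about the weighted dual pairing (with $Q:H^p_{-s}\to L^2_{s}$ and $R(z):L^2_s\to H^p_{-s}$, then density) is the right justification and is more careful than the paper's terse presentation.

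One point worth flagging: the computation you carry out — and, once unpacked, the paper's own — yields $\widehat{R(z)f}(\Xi) = A^*_\Xi(\bar z)f/(E_j(\xi)-z)$, with denominator $E_j(\xi)-z$, not $E_j(\xi)-\bar z$ as printed in \eqref{hat_r(z)f}. (As a sanity check, take $Q=0$: then $A^*_\Xi(\bar z)f=\hat f(\Xi)$ and the spectral theorem gives $\widehat{R_0(z)f}(\Xi)=\hat f(\Xi)/(E_j(\xi)-z)$.) So your final sentence ``dividing by $E_j(\xi)-z$ produces the stated identity'' is slightly off: your division is correct, but what it produces differs from the printed statement by a conjugation in the denominator. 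This appears to be a typo in the paper, harmless downstream because the formula is only used through $|\widehat{R(\mu+i\eta)f}(\Xi)|^2$ in the proof of Theorem~\ref{thm:EET}, where the sign of the imaginary part in the denominator is irrelevant.
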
 
\begin{proof}
We have by construction
$A_\fco(z) = (I-R(z)Q)\psi_j(\xi)$
so that 
\[ (H-z) A_\fco(z) = (H_0-z) \psi_{\fco} = (E_j(\xi)-z) \psi_j(\xi).\]
This is \eqref{eq:(h-z)a}.
Thus, from the definition of the (unperturbed) Fourier transform \eqref{eq:FT},
\[ \Big(f,\frac{(H-z) A_\fco(z)}{E_j(\xi)-z}\Big) = \hat f(\fco).\]
From this, we deduce
\[  \widehat{R(\overline{z})f}(\fco) = \Big( R(\bar z)f,\frac{(H-z) A_\fco(z)}{E_j(\xi)-z}\Big) =  \Big(f,\frac{A_\fco(z)}{E_j(\xi)-z}\Big) = \frac{A^*_\fco(z) f}{E_j(\xi)-\bar z}.\]
This proves the result.
\end{proof}

By proposition \ref{prop:eigenvalue}, we know that $H$ has discrete point spectrum away from the discrete set $Z_H$. Let $\{\varphi_n\}$ be the countable set of orthonormalized eigenfunctions of $H$. We now prove Theorem \ref{thm:EET}, which for convenience, we recall here:
\begin{theorem}[Eigenfunction Expansion Theorem]
For $f\in L_s^2\subset\mH$,  we define $\tilde f$ as either one of the generalized Fourier transforms $\tilde f^\pm$ in \eqref{def:fourier}. Then we have the following Parseval relation:
\[
    \|f\|_{L^2}=\sum_{n=0}^\infty |(f, \varphi_n)|^2+\int_\mathbb{R}\sum_{j\in J}|\tilde{f}(\Xi)|^2d\xi.
\]
\end{theorem}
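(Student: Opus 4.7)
The plan is to combine Stone's formula for the spectral resolution of $H$ with the key resolvent identity in Proposition~\ref{prop:(h-z)a} and the unperturbed Parseval identity. By Theorem~\ref{thm:absolute_cont}, $H$ has no singular continuous spectrum, so $\|f\|_{L^2}^2=\|P_{pp}f\|^2+\|P_{ac}f\|^2$; Proposition~\ref{prop:eigenvalue} yields $\|P_{pp}f\|^2=\sum_n|(f,\varphi_n)|^2$, so it remains to show $\|P_{ac}f\|^2=\sum_j\int_\Rm|\tilde f^{\pm}(\Xi)|^2d\xi$. By density of $L^2_s$ in $L^2$ and continuity of both sides in $f$ (using Lemma~\ref{lmm:a_star}), it suffices to treat $f\in L^2_s$ with $\frac12<s<\frac{h}{2}$.

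For any interval $[a,b]\subset\Rm\setminus Z_H$, Stone's formula reads
\[ ((E_H(b)-E_H(a))f,f)=\frac{1}{\pi}\lim_{\eta\to 0^+}\int_a^b\operatorname{Im}(R(\mu+i\eta)f,f)\,d\mu. \]
The algebraic identity $R(z)-R(\bar z)=2i\eta R(z)R(\bar z)$ together with $R(\bar z)=R(z)^*$ yields the positive representation $\operatorname{Im}(R(\mu+i\eta)f,f)=\eta\|R(\mu-i\eta)f\|^2$. Applying the unperturbed Parseval identity to $R(\mu-i\eta)f$ and using $\widehat{R(\mu-i\eta)f}(\Xi)=A^*_\Xi(\mu+i\eta)f/(E_j(\xi)-\mu-i\eta)$ from Proposition~\ref{prop:(h-z)a}, I would rewrite
\[ ((E_H(b)-E_H(a))f,f)=\lim_{\eta\to 0^+}\sum_j\int_\Rm d\xi\int_a^b d\mu\,\frac{1}{\pi}\cdot\frac{\eta|A^*_\Xi(\mu+i\eta)f|^2}{(E_j(\xi)-\mu)^2+\eta^2}, \]
whose inner $\mu$-kernel is the Poisson kernel concentrating at $\mu=E_j(\xi)$ as $\eta\to 0^+$.

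Next I would take $\eta\to 0^+$ by dominated convergence. The uniform bound $|A^*_\Xi(\mu+i\eta)f|\leq C\|f\|_{L^2_s}$ from Lemma~\ref{lmm:a_star}, combined with Hypothesis~\ref{hyp:H1}(ii) (only finitely many branches cross $[a,b]$, with the remaining branches keeping $|E_j(\xi)-\mu|$ bounded below by a positive constant for $\mu\in[a,b]$, and $(1+|E_j(\xi)|^2)^{-1}$ integrable in $\xi$), supplies an integrable majorant in $(j,\xi,\mu)$. Continuity of $z\mapsto A^*_\Xi(z)f$ up to the real axis (from Theorem~\ref{thm:pla1}) then identifies the Poisson-kernel limit as $A^*_\Xi(E_j(\xi)+i0)f=\tilde f^+(\Xi)$, giving
\[ ((E_H(b)-E_H(a))f,f)=\sum_j\int_{E_j(\xi)\in(a,b)}|\tilde f^+(\Xi)|^2\,d\xi. \]
The symmetric argument starting from $\operatorname{Im}(R(\mu-i\eta)f,f)=-\eta\|R(\mu+i\eta)f\|^2$ produces the same equality with $\tilde f^-$.

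To finish, I would cover $\Rm\setminus Z_H$ by a countable family of intervals whose endpoints avoid $Z_H$ and pass to the monotone limit. Since $Z_H$ has Lebesgue measure zero and the a.c.\ spectral measure is absolutely continuous with respect to Lebesgue, the resulting sum reconstructs $\|P_{ac}f\|^2$. I expect the dominated-convergence step to be the main obstacle: one must split the sum over $j\in J$ into the finite set of branches crossing $[a,b]$ (where the Poisson-kernel localization is delicate and relies on continuity of $A^*_\Xi(\cdot)f$ supplied by Theorem~\ref{thm:pla1}) and the tail branches (where $|E_j(\xi)-\mu|$ is uniformly bounded away from zero so the Poisson kernel decays uniformly like $\eta$, with the $\xi$-summation controlled by the integrability assumption in Hypothesis~\ref{hyp:H1}(ii)).
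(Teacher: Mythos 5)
Your proposal follows essentially the same route as the paper's proof: Stone's formula combined with the resolvent identity to obtain the positive representation $\eta\|R(\mu\pm i\eta)f\|^2$, the unperturbed Parseval identity together with the key relation $\widehat{R(z)f}(\Xi)=A^*_\Xi(\bar z)f/(E_j(\xi)-\bar z)$ from Proposition~\ref{prop:(h-z)a}, Poisson-kernel concentration at $\mu=E_j(\xi)$, dominated convergence after splitting $J$ into the finitely many branches near $[a,b]$ and the uniformly separated tail, and finally a monotone/covering argument to piece together $\Rm\setminus Z_H$. The only refinements the paper makes are to cite the Poisson-kernel lemma explicitly (\cite[Proposition 4.3]{yamada1975eigenfunction}) and to write out the two uniform dominating bounds (\eqref{est:uniform}--\eqref{est:uniform2}), but the structure and the key estimates you invoke match the paper's argument.
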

\begin{proof}
For fixed $\xi$, let 
$$J_{(\alpha,\beta)}(\xi)=\{j\in J |   \ E_j(\xi)\in (\alpha,\beta)\}.$$
Let $[\alpha,\beta]$ contain no eigenvalue of $H$. We wish to show that
\begin{gather}\label{first_assertion}
    ((E(\beta)-E(\alpha))f,f)=\int_{\Rm}\sum_{j\in J_{(\alpha, \beta)}(\xi)}|\tilde{f}(\Xi)|^2 d\xi,
\end{gather}
and for an eigenvalue $\lambda$, that
\begin{gather}\label{second_assertion}
     ((E(\lambda)-E(\lambda-))f,f)=\sum_{i=1}^k|(f, \varphi_{\lambda,i})|^2,
\end{gather}
where $\{\varphi_{\lambda,i}\}$ are the orthonormalized eigenfunctions of $H$ associated with the eigenvalue $\lambda$.

The second assertion \eqref{second_assertion} reduces to the well-known Parseval equality of Fourier series. To prove the first assertion, we first assume that 
$k_n<\alpha<\beta<k_{n+1}$
for some $n\in\mathbb{Z}$ where $\{k_n\}$ label elements in $Z_H$. We make use of
\begin{gather*}
    ((E(\beta)-E(\alpha))f,f)=\frac{1}{2\pi i}\underset{\eta\downarrow 0}{\mathlarger{\lim}}\int_\alpha^\beta (R(\mu+i\eta)f-R(\mu-i\eta)f,f)d\mu.
\end{gather*}
Since $(E(\lambda)f, f)$ is absolutely continuous with respect to $\lambda$, we have using the resolvent equation $R(z_1)-R(z_2)=(z_1-z_2)R(z_1)R(z_2)$ and \eqref{hat_r(z)f}, that
\begin{gather}\label{eq:e}
\begin{aligned}
   & ((E(\beta)-E(\alpha))f,f)=\frac{1}{2\pi i}\underset{\eta\downarrow 0}{\lim}\int_\alpha^\beta 2i\eta (R(\mu-i\eta)R(\mu+i\eta)f,f)d\mu\\
    =&\frac{1}{\pi }\underset{\eta\downarrow 0}{\lim}\int_\alpha^\beta \eta \|R(\mu+i\eta)f\|^2d\mu=\frac{1}{\pi }\underset{\eta\downarrow 0}{\lim}\int_\alpha^\beta \eta \|\widehat{R(\mu+i\eta)f}\|^2d\mu\\
   =&\frac{1}{\pi }\underset{\eta\downarrow 0}{\mathlarger{\lim}}\int_\alpha^\beta\int_{\mathbb{R}}\sum_{j}\eta\left|\frac{A^*_\Xi(\mu-i\eta)}{E_j(\xi)-\mu +i\eta}f\right|^2d\xi d\mu,
\end{aligned}
\end{gather}
using the Parseval identity for $\mF$ in \eqref{eq:FT} (i.e., $\mF$ is an isometry) and \eqref{hat_r(z)f} for the last line. To analyze the above integral, we recall  (see \cite[Proposition 4.3]{yamada1975eigenfunction}) that for $f(\mu, \eta)$  a continuous function on $[\alpha, \beta] \times [0, \eta_0]\ (\alpha<\beta, \eta_0>0)$, we have
%\begin{proposition}\label{prop:prep}
%Let $f(\mu, \eta)$  be a continuous function on $[\alpha, \beta] \times [0, \eta_0]\ (\alpha<\beta, \eta_0>0)$. %Then:% the following relation holds:
\begin{gather}\label{eq:limits}
    \frac{1}{\pi}\underset{\eta\downarrow 0}{\lim}\int_\alpha^\beta\frac{\eta}{(\lambda-\mu)^2+\eta^2}f(\mu,\eta)d\mu=\begin{cases}
0, &  \lambda>\beta\ or\  \lambda<\alpha  \\
\frac12 f(\lambda,0), & \lambda=\alpha\ or\ \beta\\
f(\lambda,0), & \alpha<\lambda<\beta.
\end{cases}
\end{gather}
Moreover, the above integral is uniformly bounded on $[\alpha, \beta] \times [0, \eta_0]$.
%\end{proposition}

Lemma \ref{lmm:a_star} implies that $|A^*_\Xi(\mu-i\eta)f|^2=|\tilde f(\Xi)|^2$ is continuous in $\Xi$ and uniformly bounded  on $[\alpha, \beta]\times [0,\eta_0]$. Then \eqref{eq:limits} yields
\begin{gather}\label{lim:eta}
\begin{aligned}
    &\underset{\eta\to 0}{\lim} \frac1\pi\int_\alpha^\beta\eta\left|\frac{A^*_\Xi(\mu-i\eta)}{E_j(\xi)-\mu-i\eta}f\right|^2d\mu
    \\& =\underset{\eta\to 0}{\lim}\frac1\pi\int_\alpha^\beta\frac{\eta}{|E_j(\xi)-\mu|^2+\eta^2} |A^*_\Xi(\mu-i\eta)f|^2 d\mu
    \ =\ \begin{cases}
    |\tilde{f}(\Xi)|^2, &  j\in J_{(\alpha,\beta)}(\xi) \\
    \frac12 |\tilde{f}(\Xi)|^2 &  E_j(\xi)=\alpha\  \text{or}\ \beta \\
    0, &  \text{otherwise}.
    \end{cases}
\end{aligned}
\end{gather}
Moreover, this integral is uniformly bounded for $\Xi\in\mathbb{R}\times J$ and $(\mu,\eta)\in (\alpha,\beta)\times(0,1)$. 

We split $J$ into two parts $J_1=\big\{j\in J|\  d(E_j(\xi),[\alpha,\beta]) \geq 1, \ \forall \xi\in \Rm\big\}$ and $J_0:=J\setminus J_1$.
Here, $d(x,X)$ is Euclidean distance between a point $x\in\Rm$ and an interval $X\subset \Rm$.
By hypothesis \ref{hyp:H1}, $J_0$ is finite. Let $j\in J_0$. Using \eqref{lim:eta} for $\xi$ such that $E_j(\xi)\in[\alpha-1,\beta+1]$, and $|E_j(\xi)-\mu+i\eta|^2\geq C(1+|E_j(\xi)|^2)$ otherwise, we deduce that there is a constant $C=C(\alpha,\beta)$ such that:
\begin{gather}\label{est:uniform}
%\sum_{j\in J_0}\int_\alpha^\beta\eta\left|\frac{A^*_\Xi(\mu-i\eta)}{E_j(\xi)-\mu\tb{+}i\eta}f\right|^2d\mu\leq\frac{C}{1+\xi^2}\ \text{when}\ |\xi|\to\infty.
\sum_{j\in J_0} \int_\alpha^\beta\eta\left|\frac{A^*_\Xi(\mu-i\eta)}{E_j(\xi)-\mu+i\eta}f\right|^2d\mu \ \leq\    \frac{C \|f\|_{L^2_s}}{1+|E_j(\xi)|^2} .
\end{gather}
%We also used that $|A^*_\Xi(\mu-i\eta)f|^2$ was uniformly bounded by $C\|f\|_{L^2_s}$ as shown in Lemma \ref{lmm:a_star}.
%It is because the integral is  uniformly bounded for $\xi\in\mathbb{R}$ and $(\mu,\eta)\in (\alpha,\beta)\times(0,1)$ from Proposition \ref{prop:prep}.
%And for $|\xi|\to\infty$, the estimate results from that $|A^*(\mu-i\eta,\xi,m)f|^2$ is uniformly bounded and $|E_m(\xi)-\mu-i\eta|=O(\xi^{-1})$.

For $j\in J_1$, we have by construction that $|E_j(\xi)-\mu-i\eta|\geq 1$. % and $|E_m(\xi)-\mu-i\eta|=O(\xi^{-1})$. And n
Notice that
\begin{gather*}
    \sum_{j\in J}|A^*_\Xi(z)f|^2=\sum_{j\in J}|(f, A_\Xi(z))_{L^2}|^2=\sum_{j\in J}|(f, (I-R(z)Q)\psi_j(\xi))_{L^2}|^2\\
    =\sum_{m\in M}|((I-R(\bar{z})Q)f, \psi_j(\xi))_{L^2}|^2=\|\widehat{(I-R(z)Q)f\ }(\xi,y)\|_{L^2(y)}^2.
\end{gather*}
Since the operator $f\mapsto R(z)Qf$ from $L^2_s$ to $L^2_{-(s+h)}$ is uniformly bounded for $z\in J(\alpha,\beta)$, we deduce that
$$\sum_{j\in J_1}\int_\alpha^\beta\eta\left|A^*_\Xi(\mu-i\eta)f\right|^2d\mu$$ 
is also uniformly bounded for $\xi\in\mathbb{R}$ and $(\mu,\eta)\in (\alpha,\beta)\times(0,1)$. Thus we conclude that there exists a constant $C=C(\alpha,\beta)$ independent of $\xi,\mu,\eta$ such that
\begin{gather}\label{est:uniform2}
 \sum_{j\in J_1}\int_\alpha^\beta\eta\left|\frac{A^*_\Xi(\mu-i\eta)}{E_j(\xi)-\mu+i\eta}f\right|^2d\mu\leq\frac{C\|f\|_{L^2_s}}{1+|E_j(\xi)|^2}. %\ \text{when}\ |\xi|\to\infty.
\end{gather}
The estimates in \eqref{est:uniform} and \eqref{est:uniform2} are integrable in $\xi$ by Hypothesis \ref{hyp:H1}.
Thus, by the dominated convergence theorem, we deduce from \eqref{eq:e} and \eqref{lim:eta} that
\begin{gather}\label{end}
\begin{aligned}
    &((E(\beta)-E(\alpha))f,f)
   =\frac{1}{\pi }\int_{\mathbb{R}}\underset{\eta\downarrow 0}{\mathlarger{\lim}}\int_\alpha^\beta\sum_{j\in J}\eta\left|\frac{A^*_\Xi(\mu-i\eta)}{E_j(\xi)-\mu-i\eta}f\right|^2d\mu d\xi
   \\ =&  \ \int_{\Rm}\sum_{j\in J{(\alpha, \beta)}(\xi)}|\tilde{f}(\Xi)|^2d\xi.
\end{aligned}
\end{gather}
This proves the first assertion \eqref{first_assertion} when $k_n<\alpha<\beta<k_{n+1}$
for some $n\in\mathbb{Z}$. By monotone convergence, we deduce that 
\[
  ((E(k_{n+1}-)-E(k_{n}+))f,f) = %%%\frac{1}{\pi } %%% Don't think it is there.
   \int_{\Rm}\sum_{j\in J_{(k_n, k_{n+1})}(\xi)}|\tilde{f}(\Xi)|^2d\xi.
\]
This handles the absolutely continuous part of the spectrum of $H$. It remains to address the discrete set of points $k_n$, which as in Theorem \ref{thm:absolute_cont}, carries only point spectrum. This is taken care of by the first term on the right-hand side in \eqref{eigenfunction_expansion} as in \eqref{second_assertion}.
This concludes the proof of the theorem.
\end{proof} 

\section{Application to Dirac operators with domain walls}\label{sec:Dirac}

We now prove Theorem \ref{thm:hypDirac}. For the Dirac operator $H=H_0 + Q$ with $Q$ the operator of multiplication by $Q(x,y)$, which takes values in $2\times 2$ Hermitian matrices, it is convenient to recast $H$ as $UHU^*$ with $U$ a unitary matrix so that $UHU^*$, still called $H$, takes the form $H=H_0 + Q$, where
\begin{gather}\label{eq:H0}
    H_0=D_x\sigma_3-D_y\sigma_2+m(y)\sigma_1=\left(\begin{array}{cc}
-i\partial_x & \fa \\
\fa^*& i\partial_x\\
\end{array}\right),\qquad \fa := \partial_y+m(y).
\end{gather}
We recognize in $\fa$ the annihilation operator of the quantum harmonic oscillator when $m(y)=y$. 

The theory developed in section \ref{sec:current} requires that we prove the hypotheses [H1](o-iv), and in particular the spectral decompositions in \eqref{eq:spectraldecH0} and \eqref{eq:spectraldecH}. %%% While such a verification can undoubtedly be performed for a large class of problems, some of the steps developed below to analyze the spectrum of $H$ are intricate computationally and hence restricted to systems of Dirac operators similar to those considered in \cite{bal2023asymmetric}.
\subsection{Spectral decomposition of the unperturbed operator} \label{sec:decH0}
Since $H_0$ is invariant with respect to translations in the $x-$variable, we introduce the partial Fourier transform $H_0=\mF^{-1}_{\xi\to x} \hat H_0(\xi) \mF_{x\to\xi}$ with 
\begin{gather}\label{fourier:h0}
\hat H_0(\xi)=\xi\sigma_3+i\partial_y\sigma_2+m(y)\sigma_1%=\xi\sigma_3+h_1(y)
 =\left(\begin{array}{cc}
             \xi & \fa \\
             \fa^* & -\xi\\
\end{array}\right).
\end{gather}
We then compute
\begin{gather}\label{delta}
H_0^2-\lambda^2 = (H_0+\lambda)(H_0-\lambda)=\left(\begin{array}{cc}
             -\partial_{xx}+\fa\fa^* - \lambda^2 & 0 \\
             0 &  -\partial_{xx}+\fa^*\fa - \lambda^2\\
             \end{array}\right).
\end{gather}
%Thus, $H_0^2$ is not quite proportional to identity since $[D_y,m(y)]=-im'(y)\not=0$. 
We now present a spectral decomposition of $H_0$ as well as an explicit expression for the resolvent operator $R_0(z)=(H_0-z)^{-1}$ for $z\in \Cm$ with $\Im z\not=0$.

We first observe that since the range of $m(y)$ is unbounded, standard results on Sturm-Liouville operators \cite{teschl2014mathematical} show that $\fa^*\fa$ and $\fa\fa^*$ have a compact resolvent and hence discrete spectrum with simple eigenvalues. It is also straightforward to observe that $\fa$ admits a kernel in $L^2(\Rm)$ of dimension one. The positive eigenvalues of $\fa^*\fa$ and $\fa\fa^*$ are the same and we thus get the existence of simple eigenvalues $\rho_0=0<\rho_n<\rho_{n+1}$ for $n\geq1$ with $\rho_n/n$ converging to $2$ as $n\to\infty$. Moreover, we have the existence of two $L^2(\Rm;dy)-$orthonormal bases $(\nu_n)_{n\geq0}$ and $(\mu_n)_{n\geq1}$ such that 
\begin{equation}\label{eq:varphipsin}
    \fa^*\fa \nu_n = \rho_n \nu_n,\quad n\geq0;\qquad \fa\fa^* \mu_n=\rho_n \mu_n,\quad n\geq1.
\end{equation}
When $m(y)=y$, then $\nu_n$ and $\mu_n$ are both Hermite functions associated to the quantum harmonic oscillator. We also verify that (for $\|\cdot\|$ the standard $L^2(\Rm;dy)$ norm)
\begin{equation}\label{eq:controlfa}
\|f\| + \|yf\| + \|D_y f\| \leq C \|\fa^* f \|,\qquad \|yf\| + \|D_y f\| \leq C (\|\fa f\| + \|f\|).
\end{equation}
The asymmetry between the above two results stems from the fact that $\fa^*$ has trivial $L^2-$kernel while $\fa\nu_0=0$.

Define $\Pi^\nu_n=\nu_n\otimes \nu_n$ for $n\geq0$ and $\Pi^\mu_n=\mu_n\otimes \mu_n$ for $n\geq1$ (with $\Pi^\mu_0=0$ to simplify notation). Then, we observe that we have the spectral decomposition:
\begin{equation}
    H_0^2 -\lambda^2 = \mF_{\xi\to x}^{-1}  \dsum_{n\geq0} \dint_{\Rm} d\xi \ 
    (\xi^2+\rho_n-\lambda^2) \begin{pmatrix} \Pi^\nu_n & 0 \\ 0 & \Pi^\mu_n \end{pmatrix} \, \mF_{x\to\xi}.
\end{equation}
This provides the following explicit expression for the resolvent
\begin{eqnarray}
    &R_0(\lambda) &= (H_0-\lambda)^{-1} = (H_0+\lambda) (H_0^2 -\lambda^2)^{-1} 
     \\ && = (H_0+\lambda) \mF_{\xi\to x}^{-1}  \dsum_{n\geq0} \dint_{\Rm} d\xi \ 
    (\xi^2+\rho_n-\lambda^2)^{-1} \begin{pmatrix} \Pi^\nu_n & 0 \\ 0 & \Pi^\mu_n \end{pmatrix} \, \mF_{x\to\xi}.
\end{eqnarray}
Estimates on $R_0(\lambda)$ may thus be obtained by applying $H_0+\lambda$ to the resolvent of the operator $H_0^2$. The above construction also shows that the eigenvalues of $\hat H_0^2(\xi)$ are given explicitly by $E_n^2(\xi)= \xi^2+\rho_n$ for $n\geq0$. We now show that $\pm E_n(\xi)$ are indeed eigenvalues of $\hat H_0(\xi)$. To simplify notation, we introduce the set $M$ of indices $m=(\pm1,n)$ for $n\geq1$ and $0\equiv (-1,0)$ for $n=0$. We then define the eigenvectors $\phi_m(\xi)$ for $m=(\pm,n) \in M$ as 
\[  \phi_m = \begin{pmatrix} \varphi_m \\ \psi_m \end{pmatrix}, \qquad E_m = \pm (\xi^2+\rho_n)^{\frac12}\]
with $\phi_0=(\nu_0,0)^t$ independent of $\xi$ for $m=0$ and for $n\geq1$,
\begin{equation}\label{eq:phimxi}
    \phi_m(\xi) =  c_n \begin{pmatrix} (E_m(\xi)+\xi) \nu_n \\ \rho_n\mu_n \end{pmatrix}, \qquad c_n^{-2} = 2E_m(\xi)(E_m(\xi)+\xi) >0.
\end{equation}
We verify that $c_m$ is indeed defined and independent of $\pm$ and hence labeled $c_n$. Since the functions $\nu_n$ and $\mu_n$ form an orthonormal basis, we deduce that the functions $\phi_m$ also form an orthonormal basis of $L^2(\Rm;\Cm^2)$.
From this completeness result, we deduce the spectral decomposition
\begin{equation}\label{eq:spectralH0}
    H_0 = \mF_{\xi\to x}^{-1} \dsum_m \dint_{\Rm} d\xi \  E_m(\xi)\, \Pi^\phi_m(\xi)  \ \mF_{x\to\xi} ,\qquad \Pi^\phi_m(\xi) = \phi_m(\xi) \otimes \phi_m(\xi).
\end{equation}
This provides an explicit expression for \eqref{eq:spectraldecH0} in hypothesis [H1] with $\psi_m$ defined by \eqref{eq:unperturbedxi}.

\medskip

We finally turn to the construction of the generalized eigenfunctions \eqref{eq:unperturbedE} at a fixed energy level $E\in\Rm\backslash Z_D$, where for the Dirac operator, we deduce from the explicit expression in \eqref{eq:phimxi} that the set $Z$ of critical values defined in \eqref{eq:Z} is given explicitly by
\begin{equation}\label{eq:ZDirac}
  Z_D = \Big\{ \, \pm \sqrt{\rho_n}; \ n\in\Nm \ \Big\}.
\end{equation}
%Here, we define $\lambda_0=0$.  
When $m(y)=y$ is a linear domain wall, then we verify that $\rho_n=2n$. By hypothesis \ref{hyp:rangem}, we deduce from \cite[Theorem 4.10]{kato2013perturbation} that $|\rho_n-2n|$ is bounded independently of $n$.

We construct a basis of $L^2(\Rm;\Cm^2)$ called $\phi_m(E)$ with a slight abuse of notation, solutions of $(\hat H_0(\xi_m)-E)\phi_m(E)=0$. The values $\xi_m$ are defined explicitly by
\begin{equation}\label{eq:xim}
    \xi_m = \epsilon_m (E^2-\rho_n)^{\frac12}
\end{equation}
where $m=(\epsilon_m,n)$ and $(-1)^{\frac12}=i$. Thus $\xi_m$ is real-valued for $n$ sufficiently small and purely imaginary when $E^2-\rho_n<0$. Since $E\not\in Z_D$, $E^2-\rho_n\not=0$. We then define
\begin{equation}\label{eq:phimE}
    \phi_m(E) =  c_n \begin{pmatrix} \sqrt{\rho_n} \mu_n \\ 
    (E-\xi_m(E))\nu_n \end{pmatrix}, \qquad c_n^{-2} = \rho_n+|E-\xi_m|^2>0.
\end{equation}
%%%\gb{To be checked.} 
The functions $m\mapsto \phi_m(y;E)$ form a basis of $L^2(\Rm;\Cm^2)$ but no longer an orthonormal one. However, the orthonormalization of this basis  is a bounded operator with bounded inverse as we now show. 

When $m=(n,\epsilon_m)$ while $q=(p,\epsilon_q)$, we verify that $(\phi_m,\phi_q)=0$ when $n\not=p$. This is a direct consequence of the orthogonality of the families $\nu_n$ and $\mu_n$. However, for $q=m':=(m,-\epsilon_m)$, then we have
\[
(\phi_m,\phi_{m'})=\frac{\rho_n+(E-\epsilon_m)(\overline{E+\epsilon_m)}}{\sqrt{\rho_n+|E-\xi_m|^2}\cdot\sqrt{\rho_n+|E+\xi_m|^2}}=\frac{1+\frac{\overline{E+\xi_m}}{E+\xi_m}}{2+2\frac{|E|^2+|\xi_m|^2}{E^2-\xi^2_m}}.
\]
As a consequence, $|(\phi_m,\phi_{m'})|<\frac12$. The functions $(\phi_m)$ are thus linearly independent and form a basis of $L^2(\Rm;\Cm^2)$ by completeness of the families $\nu_n$ and $\mu_n$. The procedure of orthonormalization of the (normalized) basis elements $\phi_m$ is therefore an operator of norm bounded by $2$.

\subsection{Estimates for unperturbed operator}
We now verify that Hypothesis \ref{hyp:H0H} holds for the unperturbed Dirac operator.

\begin{proposition}\label{prop:dirac}
Estimate (1.) in Hypothesis \ref{hyp:H0H} holds for the Dirac operator.  
\end{proposition}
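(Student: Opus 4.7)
The strategy is to reduce the two-dimensional estimate to a family of one-dimensional resolvent bounds via the factorization
\[
(H_0-\lambda)^{-1}=(H_0+\lambda)(H_0^2-\lambda^2)^{-1}.
\]
By \eqref{delta}, $H_0^2-\lambda^2$ is block-diagonal with scalar entries $-\partial_{xx}+\fa\fa^*-\lambda^2$ and $-\partial_{xx}+\fa^*\fa-\lambda^2$. Expanding in the eigenbases $\{\mu_n\}$ and $\{\nu_n\}$ from \eqref{eq:varphipsin}, these inverses decompose as $\sum_n G_n\otimes \Pi_n$, where the one-dimensional operator $G_n=(-\partial_{xx}+\rho_n-\lambda^2)^{-1}$ acts on $L^2(\Rm_x)$ with Fourier symbol $(\xi^2+\rho_n-\lambda^2)^{-1}$.

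The main analytic ingredient is a uniform one-dimensional bound: for $s>\tfrac12$ there exists $C=C(s,a,b)$ such that $\|G_n\|_{L^2_s(\Rm_x)\to H^2_{-s}(\Rm_x)}\leq C$ for every $n\in\Nm$ and every $\lambda\in\rJ(a,b)$. Since $\rho_n\sim 2n$ under Hypothesis~\ref{hyp:rangem}, the set $N_0=\{n:\rho_n\leq b^2+1\}$ is finite. For $n\notin N_0$ the symbol is bounded below by $c\rho_n>0$ uniformly in $\xi\in\Rm$ and $\lambda\in\rJ(a,b)$, so that a direct Plancherel argument gives the stronger decay $\|G_n\|_{L^2\to H^2}\leq C\rho_n^{-1}$. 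For the finitely many $n\in N_0$, $G_n$ is a standard one-dimensional Helmholtz-type resolvent, and the classical Agmon-type estimate (cf.\ \cite[Thm.~3.2]{ASNSP_1975_4_2_2_151_0}) supplies the required uniform weighted bound on $\rJ(a,b)$.

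Once the 1D bounds are established, Parseval in $y$ assembles the two-dimensional estimate: since the weight $\aver{x}^{\pm s}$ depends only on $x$, the decomposition along $\{\mu_n\}\cup\{\nu_n\}$ is orthogonal in both $L^2_s(\Rm^2)$ and $L^2_{-s}(\Rm^2)$, yielding
\[
\|(H_0^2-\lambda^2)^{-1}f\|_{L^2_{-s}}^2=\sum_n\|G_n f_n\|_{L^2_{-s}(\Rm_x)}^2\leq C^2\|f\|_{L^2_s}^2,
\]
and the analogous $H^2_{-s}$ bound on the left-hand side. Multiplying by the first-order operator $H_0+\lambda$ then controls the $\partial_x u$ and $\partial_y u$ components of the target $H^1_{-s}$ norm. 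The remaining $\aver{y}$-weighted piece $\|yu\|_{L^2_{-s}}$ is handled by invoking \eqref{eq:controlfa} fiberwise in $y$ at fixed $x$, which converts the $\fa,\fa^*$ bounds into the required weighted $y$-estimates.

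The principal obstacle is making the 1D Agmon bound on the resonant indices in $N_0$ genuinely uniform in $\lambda\in\rJ(a,b)$ as $\Im\lambda\to 0$; this is precisely where the threshold $s>\tfrac12$ enters, through the Agmon trace lemma applied to the (non-degenerate) one-dimensional momentum shells $\{\xi:\xi^2+\rho_n=\Re\lambda^2\}$. A secondary but subtle point is that the non-resonant decay $\|G_n\|\leq C\rho_n^{-1}$ must be strong enough to absorb the $\aver{y}$ and $D_y$ factors produced when $H_0+\lambda$ acts on the eigenbasis expansion; the two derivatives' worth of gain encoded in $H^2$-boundedness of $G_n$, combined with the one-sided control \eqref{eq:controlfa}, is exactly what makes this absorption work.
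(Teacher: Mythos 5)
Your proposal reproduces the paper's own strategy essentially step for step: factor through $(H_0+\lambda)(H_0^2-\lambda^2)^{-1}$, use the block-diagonal structure \eqref{delta} with the eigenbases $\{\nu_n\},\{\mu_n\}$ to reduce to the one-dimensional resolvents $G_n=(D_x^2+\rho_n-\lambda^2)^{-1}$, invoke Agmon's uniform weighted $L^2_s\to H^2_{-s}$ estimate on the finitely many low-$\rho_n$ modes (the paper cites \cite[Theorem~A.1]{ASNSP_1975_4_2_2_151_0} rather than Thm.~3.2, but it is the same appendix machinery), handle the large-$\rho_n$ modes by elementary symbol positivity, reassemble via Parseval in $y$, and use \eqref{eq:controlfa} together with the $\fa^*\fa$ relation to recover the $y$-weighted and $D_y$ pieces of the $H^1_{-s}$ norm after applying $H_0+\lambda$. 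One small imprecision: $\|G_n\|_{L^2\to H^2}$ is uniformly $O(1)$, not $O(\rho_n^{-1})$; the $\rho_n^{-1}$ gain is in the $L^2\to L^2$ piece (with $\rho_n^{-1/2}$ for one derivative), which is exactly what absorbs the $\aver{y}$ and $D_y$ factors as you intend, so the argument still goes through.
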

\begin{proposition}\label{prop:dirac_real}
Estimate (2.) in Hypothesis \ref{hyp:H0H} holds for the Dirac operator. 
\end{proposition}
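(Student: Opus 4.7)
The plan is to exploit the spectral decomposition of $H_0$ established in Section~\ref{sec:decH0} to reduce the estimate to one-dimensional Hardy-type inequalities applied to each spectral branch. First, take a partial Fourier transform in $x$, so the equation $(H_0-\lambda)u=f$ becomes, for each $\xi\in\Rm$, the fiber equation $(\hat H_0(\xi)-\lambda)\tilde u(\xi,y)=\tilde f(\xi,y)$. Expand in the orthonormal eigenbasis $\{\phi_m(y;\xi)\}_{m\in M}$ of $\hat H_0(\xi)$ constructed in \eqref{eq:phimxi}: writing $\tilde u_m(\xi)=(\tilde u(\xi,\cdot),\phi_m(\xi))$ and $\tilde f_m(\xi)=(\tilde f(\xi,\cdot),\phi_m(\xi))$, one gets the scalar relation $(E_m(\xi)-\lambda)\tilde u_m(\xi)=\tilde f_m(\xi)$ for every mode.

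Next, split $M$ into ``evanescent'' and ``propagating'' modes. Since $E_m(\xi)^2=\xi^2+\rho_n$ with $\rho_n\to\infty$ and $\lambda\in[a,b]$ is bounded, only finitely many indices $n\in N_0$ admit $\xi\in\Rm$ with $E_m(\xi)=\lambda$; for all other $n$ we have a uniform lower bound $|E_m(\xi)-\lambda|\ge c(a,b)\sqrt{\rho_n}$. For these evanescent modes the multiplier $(E_m-\lambda)^{-1}$ is smooth and bounded, so summing the contributions and invoking the orthonormality of $\{\phi_m(y;\xi)\}$ together with the $a$ priori bound \eqref{eq:controlfa} (which controls $\|yu\|+\|D_y u\|$ via $\mathfrak a,\mathfrak a^*$) yields an $H^1_s$-bound with constant independent of $\lambda\in[a,b]$.

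The remaining, and essential, work concerns the finitely many propagating modes $m$ with $n\in N_0$. Since $\lambda\notin Z_D$, the equation $E_m(\xi)=\lambda$ has only simple real roots $\xi_j^m$, with $\partial_\xi E_m(\xi_j^m)=\xi_j^m/\lambda\neq 0$, so near each root $|E_m(\xi)-\lambda|\ge c|\xi-\xi_j^m|$ and away from roots $|E_m(\xi)-\lambda|$ is bounded below. Introducing a smooth partition of unity in $\xi$ adapted to these roots, the estimation of $\tilde u_m=\tilde f_m/(E_m-\lambda)$ reduces, on each piece containing a single root $\xi_j^m$, to bounding $\mathcal F^{-1}_{\xi\to x}[\chi(\xi)\hat g(\xi)/(\xi-\xi_j^m)]$ in $L^2_{s-1-\epsilon}(\Rm_x)$ in terms of $\hat g\in H^s(\Rm_\xi)$ (the Fourier-Plancherel counterpart of $g\in L^2_s(\Rm_x)$). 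This is exactly the content of the Hardy-type lemma in \cite[Proposition~4.1]{yamada} (also \cite[Theorem~3.2]{ASNSP_1975_4_2_2_151_0}), which shows that division by a simple real linear factor costs at most $1+\epsilon$ in the polynomial weight.

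Finally, upgrade $L^2_{s-1-\epsilon}$-control to $H^1_{s-1-\epsilon}$-control using ellipticity. Since $H_0 u=\lambda u+f$, the identity $\|H_0 u\|_{L^2_{s-1-\epsilon}}\le|\lambda|\|u\|_{L^2_{s-1-\epsilon}}+\|f\|_{L^2_s}$ combined with \eqref{eq:controlfa} applied componentwise (to convert control of $\mathfrak a u_1,\mathfrak a^* u_2$ and $\partial_x u$ into control of $D_y u$ and $yu$) yields the desired bound $\|u\|_{H^1_{s-1-\epsilon}}\le C\|f\|_{L^2_s}$ with $C=C(s,\epsilon,a,b)$ independent of $\lambda\in(a,b)$. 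The main obstacle is the propagating mode analysis, where uniformity of the Hardy estimate as $\lambda$ varies within $[a,b]$ must be tracked carefully; this works because $\xi_j^m(\lambda)$ and $\partial_\xi E_m(\xi_j^m(\lambda))$ vary smoothly in $\lambda$ and remain bounded away from $0$ on any compact subset of $\Rm\setminus Z_D$. The structure of the argument parallels the treatment of the constant-coefficient Dirac operator in \cite{yamada1975eigenfunction}, the only essential modification being the replacement of the Fourier transform in $y$ by the expansion in the $\xi$-dependent basis $\phi_m(y;\xi)$.
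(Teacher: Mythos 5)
Your strategy is genuinely different from the paper's. You Fourier-transform in $x$ and expand in the $\xi$-dependent fiber eigenbasis $\phi_m(y;\xi)$ of \eqref{eq:phimxi}, reducing to multiplier relations $\tilde u_m(\xi)=\tilde f_m(\xi)/(E_m(\xi)-\lambda)$ and then invoking the Hardy-type division lemma of Agmon/Yamada. The paper instead fixes $\lambda=E$ real and expands in the \emph{$\xi$-independent} basis $\phi_m(y;E)$ constructed in \eqref{eq:phimE}. This is the key structural choice: with a fixed basis the decomposition $u(x,y)=\sum_m u_m(x)\phi_m(y;E)$ separates variables cleanly, the norm equivalence \eqref{eq:normbound} immediately transports weighted $L^2_x$ norms to the coefficients, and the equation reduces to decoupled constant-coefficient ODEs $(D_x-\xi_m(E))u_m=\cdots$ in $x$, handled elementarily by Lemma~\ref{lmm:1dim2}. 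No Agmon-Hörmander machinery is needed. Your route is the more standard Agmon/Yamada template, but it buys you nothing here and introduces difficulties the paper avoids.

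Two concrete gaps in your argument. First, the Hardy-type lemma you cite (Yamada Prop.~4.1, Agmon Thm.~3.2) requires the Fourier coefficient $\tilde f_m$ to vanish at the simple root $\xi_j^m$; otherwise $\tilde f_m/(E_m-\lambda)$ has a non-$L^2$ singularity and the inverse transform is not even an $L^2$ function. Here the vanishing is a consequence of the hypothesis $u\in H^p$ (so $\tilde u_m\in L^2(\Rm_\xi)$, which forces $\tilde f_m(\xi_j^m)=0$ when $\tilde f_m$ is continuous, i.e.\ when $s>\tfrac12$), exactly as in the paper's Lemma~\ref{lmm:outgoing}. You never state this, and for $0<s\le\tfrac12$ the argument would need additional care. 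Second, and more seriously, your passage from coefficient-wise bounds to the weighted $H^1_{s-1-\epsilon}$ estimate is too quick. Orthonormality of $\{\phi_m(\cdot;\xi)\}$ holds at each fixed $\xi$ and gives the unweighted Plancherel identity $\|u\|_{L^2}^2=\int\sum_m|\tilde u_m(\xi)|^2\,d\xi$, but the weight $\aver{x}^t$ corresponds on the Fourier side to a differential (pseudodifferential) operator in $\xi$, which hits both $\tilde u_m(\xi)$ \emph{and} the $\xi$-dependent basis vectors $\phi_m(y;\xi)$. Those cross terms involve $\partial_\xi\phi_m$ and must be estimated uniformly in $m$ and $\xi$ before you can ``sum the contributions''; this is precisely what the paper's $E$-dependent basis eliminates. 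Your approach can likely be pushed through (Agmon and Yamada do this for finite-dimensional symbols), but as written it skips the main technical work, and you lose the elegance that makes the paper's proof go through in half a page.
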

The first proposition is useful for $s>\frac12$ close to $\frac12$ while the second estimate is useful for $s-1-\eps>0$.  
%%% We first state the following elementary lemma, which is a variant of \cite[Lemma A.1]{ASNSP_1975_4_2_2_151_0}.

\begin{proof}[Proposition \ref{prop:dirac}]
We introduce
$u=(H_0+\lambda) v$ so that $v=(H_0^2-\lambda^2)^{-1}(H_0-\lambda) u$.  We wish to show that
\begin{equation}\label{eq:vdouble}
  \|v\|_{H^2_{-s}} \leq C  \|(H_0-\lambda) u \|_{L^2_s} = \|(H_0^2-\lambda^2) v \|_{L^2_s}.
\end{equation}

Recall from \eqref{delta} that
$H_0^2-\lambda^2 = {\rm Diag}( D_x^2+\fa^*\fa-\lambda^2 , D_x^2+\fa\fa^*-\lambda^2 )$, and that for orthonormal bases $\{\nu_n\},\ \{\mu_n\}$, we have $\fa^*\fa \nu_n=\rho_n \nu_n$ with $\fa\fa^* \mu_n = \rho_n \mu_n$. We also have $\fa\nu_0=0$.
Consider
\[ (D_x^2+\fa^*\fa -\lambda^2) v_1=g_1\]
with the expansion
\[ v_1=\sum_{n\geq0} v_{1n}(x) \nu_n(y) \qquad \mbox{ so that } \qquad  (D_x^2+\rho_n-\lambda^2) v_{1n} = g_{1n}\]
with obvious notation, where $\rho_n\geq0$. Define $S=S(s)=\sup_{x\in\Rm} \aver{x}^{-2s} |D_x (\aver{x}^{2s})| <\infty$.

We use \cite[Theorem A.1]{ASNSP_1975_4_2_2_151_0} for the operator $P(D)=D^2$ and for $z=\lambda^2-\rho_n$ to obtain that
\[  \|v_{1n}\|_{H^2_{-s}} \leq C \|(D_x^2+\rho_n-\lambda^2) v_{1n}\|_{L^2_s}, \]
with a constant $C=C(s,a,b)$ independent of $v_{1n}$, $\lambda\in \rJ(a,b)$ and $\rho_n\leq 2\Re\lambda^2+S$ since $z$ belongs to a compact set then. We now verify that for $\rho_n\geq 2\Re\lambda^2+S$, then
\[\|(D_x^2+\rho_n-\lambda^2) v_{1n}\|^2_{L^2_s} =\|(D_x^2-\lambda^2) v_{1n}\|^2_{L^2_s}  + \rho_n\Big( (\rho_n-2\Re\lambda^2) \|v_{1n}\|^2_{L^2_s} +(D^2_xv_{1n},v_{1n})_{L^2_s} + (v_{1n},D^2_x v_{1n})_{L^2_s}\Big).\]
%where 
%\[ I_s = (D^2_xu,u)_{L^2_s} + (u,D^2_x u)_{L^2_s}.\]
We observe for $w\in C^\infty_c(\Rm)$ that 
$(D^2_xw,w)_{L^2_s} =  \|D_x w\|^2_{L^2_s} + \int_{\Rm} D_x \bar w (\aver{x}^{-2s} D_x (\aver{x}^{2s}))\aver{x}^{2s}dx. 
$
Since
\[ \Big|D_x \bar w (\aver{x}^{-2s} D_x (\aver{x}^{2s}))\aver{x}^{2s}\Big| \leq \frac12 |D_x w|^2 \aver{x}^{2s} + |w|^2 \frac S2  \aver{x}^{2s}\]
we deduce by a density argument that 
\[\|(D_x^2+\rho_n-\lambda^2) v_{1n}\|_{L^2_s} \geq \|(D_x^2-\lambda^2) v_{1n}\|_{L^2_s}\]
when $\rho_n\geq 2\Re\lambda^2+S$ so that the above bound on $\|v_{1n}\|_{H^2_{-s}}$ in fact holds with $C$ independent of $\rho_n\geq0$.
We then sum over $n$ using the orthonormality of the families $\nu_n$ and $\mu_n$ to get
\[  \|D_x^2 v_1\|_{L^2_{-s}} + \|v_1\|_{L^2_{-s}} \leq C \|(H_0^2-\lambda^2) v_1\|_{L^2_s}. \]
Now we use the relation
\[  \fa^*\fa v_1 = (H_0^2-\lambda^2)v_1 + \lambda^2v_1 - D_x^2 v_1 \]
and the above inequality to deduce from \eqref{eq:controlfa} bounds for $D_y^2 v_1$ as well $y^2v_1$ in weighted $L^2$ space. This implies that  \eqref{eq:vdouble} holds for $v_1$. 
We perform the same calculation for 
$v_2 = \sum_{n\geq1} v_{2n}(x) \mu_n(y)$.
Thus, \eqref{eq:vdouble} holds. It remains to apply $(H_0+\lambda)$ to $v$, count derivatives and powers of $y$, and obtain
\begin{equation}\label{eq:Hms}
  \|u\|_{H^1_{-s}} \leq C \|(H_0-\lambda) u \|_{L^2_s}.
\end{equation}
This concludes the derivation.
\end{proof}

While the proof of Proposition \ref{prop:dirac} was based on the spectral decomposition of $\fa^*\fa$ leading to that of $H_0^2-\lambda^2$, we now base the proof of Proposition \ref{prop:dirac_real} when $\lambda\equiv E$ is real-valued on the direct plane wave expansion of $H_0-\lambda$ using the eigen-elements $\phi_m(E)$ in \eqref{eq:phimE}.

We first need the following result on the operator $D=-i\partial_x$:
\begin{lemma}\label{lmm:1dim2}
Let $u\in H^1(\mathbb{R})$, and $s\geq0$. 

When $\lambda\in i\Rm$ and $\epsilon>0$, there is $C=C(s,\epsilon)$ such that
\begin{gather}\label{eq:lambda}
    \big\|u \big\|_{H_{s-1-\epsilon}^1}\leq C(1+|\lambda|)\Big\|\Big(\frac{d}{dx}-\lambda\Big)u\Big\|_{L^2_s}.
\end{gather}

When $\lambda\in \Rm$, there is $C=C(s)$ such that
\begin{gather}\label{eq:lambda2}
\big\|u\big\|_{L^2_s} \leq \frac{C}{|\lambda|(|\lambda| \wedge 1)^s} \Big\|\Big(\frac{d}{dx}-\lambda\Big)u\Big\|_{L^2_s},\quad 
\Big\|\frac{du}{dx}\Big\|_{L^2_s} \leq \frac{C(1+|\lambda|)}{|\lambda|(|\lambda| \wedge 1)^s} \Big\|\Big(\frac{d}{dx}-\lambda\Big)u\Big\|_{L^2_s}.
\end{gather}
\end{lemma}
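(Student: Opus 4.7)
The plan is to handle parts (i) and (ii) by different techniques: a gauge transformation that reduces (i) to a one-dimensional Hardy-type inequality, and the explicit integral representation of the $H^1$ solution together with Peetre's inequality for (ii).

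For part (i), write $\lambda=i\eta$ and set $u(x)=e^{i\eta x}v(x)$, so that $(\tfrac{d}{dx}-\lambda)u = e^{i\eta x}v'$. Since $|e^{i\eta x}|=1$, pointwise we have $|v|=|u|$ and $|v'|=|(\tfrac{d}{dx}-\lambda)u|$, so $\|v\|_{L^2_{s'}}=\|u\|_{L^2_{s'}}$ and $\|v'\|_{L^2_{s'}}=\|(\tfrac{d}{dx}-\lambda)u\|_{L^2_{s'}}$ for all $s'$. Moreover $u'=i\eta e^{i\eta x}v + e^{i\eta x}v'$ gives $\|u'\|_{L^2_{s-1-\epsilon}}\leq |\eta|\|v\|_{L^2_{s-1-\epsilon}}+\|v'\|_{L^2_{s-1-\epsilon}}$. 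Hence \eqref{eq:lambda} reduces to the Hardy-type estimate
\[
\|v\|_{L^2_{s-1-\epsilon}}\leq C_{s,\epsilon}\|v'\|_{L^2_s},\qquad v\in H^1(\Rm),
\]
together with the trivial $\|v'\|_{L^2_{s-1-\epsilon}}\leq\|v'\|_{L^2_s}$. To prove the Hardy inequality, use that $v\in H^1(\Rm)$ is continuous with $v(\pm\infty)=0$, so $v(x)=-\int_x^\infty v'(t)\,dt$. For $s>\tfrac12$, Cauchy--Schwarz yields $|v(x)|^2\leq\bigl(\int_x^\infty\aver{t}^{-2s}dt\bigr)\|v'\|^2_{L^2_s}\leq C_s\aver{x}^{1-2s}\|v'\|^2_{L^2_s}$ for $x\geq 0$, with the analogous bound for $x<0$. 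Integrating against $\aver{x}^{2(s-1-\epsilon)}$ and using $\int\aver{x}^{-1-2\epsilon}dx<\infty$ produces the claim.

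For part (ii), with $\lambda\in\Rm\setminus\{0\}$, the unique $H^1$ solution of $u'-\lambda u=f$ is $u(x)=-\int_x^\infty e^{\lambda(x-t)}f(t)\,dt$ when $\lambda>0$ and $u(x)=\int_{-\infty}^x e^{\lambda(x-t)}f(t)\,dt$ when $\lambda<0$. After a change of variable, both cases yield the pointwise bound
\[
|u(x)|\leq\int_0^\infty e^{-|\lambda|r}|f(x\pm r)|\,dr.
\]
Multiplying by $\aver{x}^s$ and applying Peetre's inequality $\aver{x}^s\leq 2^{s/2}\aver{x\pm r}^s\aver{r}^s$ (valid for $s\geq 0$), then Minkowski's integral inequality and translation-invariance of the unweighted $L^2$ norm, one obtains
\[
\|u\|_{L^2_s}\leq 2^{s/2}\|f\|_{L^2_s}\int_0^\infty e^{-|\lambda|r}\aver{r}^s\,dr.
\]
A direct estimate of this integral (substitute $r'=|\lambda|r$ and split the cases $|\lambda|\geq 1$ and $|\lambda|<1$) shows it is bounded by $C_s/\bigl(|\lambda|(|\lambda|\wedge 1)^s\bigr)$, yielding the first bound in \eqref{eq:lambda2}. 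The derivative bound follows from $u'=\lambda u+f$ and the triangle inequality: $\|u'\|_{L^2_s}\leq |\lambda|\|u\|_{L^2_s}+\|f\|_{L^2_s}\leq C(1+|\lambda|)/(|\lambda|(|\lambda|\wedge 1)^s)\|f\|_{L^2_s}$.

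The main technical subtlety is the sharp evaluation of $\int_0^\infty e^{-|\lambda|r}\aver{r}^s dr$, which must produce the precise combination $|\lambda|^{-1}(|\lambda|\wedge 1)^{-s}$ reflecting the degeneracy of the resolvent symbol $1/(i\xi-\lambda)$ as $|\lambda|\to 0$; this is the source of the $s$-dependent blow-up in the constant. The gauge reduction in (i) and the Peetre--Minkowski argument in (ii) are otherwise routine once the Hardy-type inequality and this integral estimate are in place.
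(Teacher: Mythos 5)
Your proof is correct, and the two parts have a different relationship to the paper's argument.

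For \eqref{eq:lambda} ($\lambda\in i\Rm$), your gauge reduction $u=e^{i\eta x}v$ and the subsequent Hardy-type estimate are a reformulation of the paper's direct argument, which writes $u(x)=\int_{-\infty}^x f(t)e^{\lambda(x-t)}dt$ (using $|e^{\lambda(x-t)}|=1$) and applies Cauchy--Schwarz with the weights $\aver{t}^{\pm 2s}$. Both arguments rely on $\int_x^\infty\aver{t}^{-2s}\,dt\lesssim\aver{x}^{1-2s}$, and hence both only cover $s>\tfrac12$; this is implicit in the paper's proof as well (and harmless, since the lemma is only invoked with $s\geq 1+\epsilon$). Your restriction to $s>\tfrac12$ is therefore consistent with the paper's own reasoning, and your explicit accounting of the $(1+|\lambda|)$ factor via $u'=i\eta u + v'e^{i\eta x}$ matches the paper's $u'=f+\lambda u$ step.

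For \eqref{eq:lambda2} ($\lambda\in\Rm$), your method genuinely differs from the paper's. The paper first establishes the unweighted bound $\|u\|_{L^2}\leq |\lambda|^{-1}\|f\|_{L^2}$ via the Fourier symbol $1/(i\xi-\lambda)$, and then bootstraps to $L^2_s$ with the smooth weight $w_\eps(x)=\aver{\eps x}^s$ by exploiting the commutator identity $(\partial_x-\lambda)(w_\eps u)=w_\eps f+\tfrac{w_\eps'}{w_\eps}w_\eps u$ together with $\|w_\eps'/w_\eps\|_\infty\leq C\eps$; the parameter $\eps\sim\lambda\wedge 1$ is then chosen to absorb the error term and the pinching $\eps^s\aver{x}^s\leq w_\eps\leq\aver{x}^s$ converts to the $\aver{x}^s$ weight. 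You instead work with the explicit exponential kernel, apply Peetre's inequality to move the weight onto the translated argument, use Minkowski's integral inequality with translation invariance, and then evaluate $\int_0^\infty e^{-|\lambda|r}\aver{r}^s\,dr$ by scaling — correctly producing the $|\lambda|^{-1}(|\lambda|\wedge 1)^{-s}$ factor in both regimes. Your route is more elementary and self-contained (no Fourier transform, no fixed-point absorption), while the paper's weight-commutator argument is more flexible for operators where explicit kernels are unavailable; both yield the same constant asymptotics. The derivative estimate via $u'=\lambda u+f$ is identical in both.
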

%%%%
\begin{proof}
    We start with $\lambda\in \Rm$. Since $u\in H^1$, we have $\underset{|x|\to\infty}{\lim}u(x)=0$ and
    \begin{gather*}
    u(x)=\int_{-\infty}^xf(t)e^{\lambda(x-t)}dt=\int_{\infty}^xf(t)e^{\lambda(x-t)}dt,
    \qquad f(x) = \Big(\frac{d}{dx}-\lambda\Big)u(x).
\end{gather*}
When $x\leq 0$,
\begin{gather*}
    |u(x)|^2\leq \int_{-\infty}^x|f(t)|^2(1+t^2)^sdt  \int_{\infty}^x(1+t^2)^{-s}dt\leq C\|f\|^2_{L_s^2} (1+x^2)^{1-2s}.
\end{gather*}
A similar estimate holds for $x\geq 0$. Multiplying with $(1+x^2)^{s-1-\epsilon}$ with $\epsilon>0$, we have $u\in L_{t}^2$ and $$\|u\|_{L_t^2}\leq C\|f\|_{L_s^2},\quad t=s-1-\epsilon.$$
Together with $u'=f+\lambda u$, we obtained the inequality \eqref{eq:lambda}.

\medskip 

Consider now the case $\lambda\in\Rm$. Denote $\partial_x\equiv\frac{d}{dx}$. The second inequality in \eqref{eq:lambda2} is a consequence of the first one and the fact that $\partial_x u =(\partial_x-\lambda)u + \lambda u$.
 We may assume $\lambda\geq0$ without loss of generality as the case $\lambda<0$ then holds after $x\mapsto -x$. Let us define $f=(\partial_x-\lambda)u$. Let $\eps>0$ and define $w_\eps(x)=\aver{\eps x}^s$. We find for $s\geq0$ that 
 \[ \Big\|\frac{w_\eps'}{w_\eps}\Big\|_\infty \leq C\eps .\]
 The result in $L^2$ (with norm $\|\cdot\|$) for $\beta=0$ holds. Indeed in the Fourier domain, 
 \[
    \hat u(\xi) = \frac1{i\xi-\lambda} \hat f,\qquad \|u\|\leq \frac1\lambda \|f\|, 
 \]
 by the Parseval equality. 
 For $s>0$ we have
 \[
    (\partial_x-\lambda) (w_\eps u) = w_\eps f - \frac{w_\eps'}{w_\eps} w_\eps u.
 \]
 Thus
 \[
    \|w_\eps u\| \leq  \frac{1}{\lambda} ( \|w_\eps f\| + C\eps \|w_\eps u\|).
 \]
 Choosing $\eps$ so that $C\eps=\frac12(\lambda\wedge 1)$, we deduce that 
 \[
    \|w_\eps u\|  \leq \frac{C}{\lambda}  \|w_\eps f\| .
 \]
 When $\lambda\gtrsim1$, we choose $\eps\sim1$ so that $w_\eps \sim \aver{x}^s$ and the result is clear. When $\lambda\lesssim1$, we use that
 \[  \lambda^s \aver{x}^s \sim \eps^{s} \aver{x}^s  \leq w_\eps(x) \leq \aver{x}^s.
 \]
 This shows that for $\lambda\lesssim1$, $\|\aver{x}^s u\| \leq C \lambda^{-1-s} \|\aver{x^s}f\|$.
\end{proof}

\begin{proof}[Proposition \ref{prop:dirac_real}.] 
    We use the basis $\phi_m(y;E)$  at a fixed $a<\lambda = E<b$ to decompose any smooth function $(x,y)$ as 
    \[ u(x,y) = \dsum_{m\in M} u_m(x) \phi_m(y;E).\] 
  We showed that $\phi_m$ formed a basis equivalent to an orthonormal one in the sense that at fixed $x$,
 \begin{equation}\label{eq:normbound}
    \|u(x,y) \|^2_{L^2_y} \approx \dsum_m  |u_m(x)|^2.
 \end{equation}
 Here $a\approx b$ when for some constant $C>0$ we have $C^{-1}a\leq b\leq Ca$.
 We also know that
 $
   (\hat H_0(\xi_m) - \lambda)\phi_m=0.
 $
 Thus,
$(H_0-\lambda) u_m\phi_m = (D_x-\xi_m) u_m (\varphi_m,0)^t - (D_x+\xi_m) (0,\psi_m)^t$
so that 
 \[
   \|(H_0-\lambda)u \|^2_{L^2_s} \approx \dsum_m \|(D_x-\xi_m) u_m\|^2_{L^2_s} + \|(D_x+\xi_m) u_m\|^2_{L^2_s}.
 \]
 Recall that $\xi_m=\eps_m(\lambda^2-\lambda_n)^{\frac12}$. 
 When $\xi_m\in\Rm$, we use \eqref{eq:lambda} to deduce that for $s-1-\epsilon\geq0$,
 \[ \|u_m\|_{H^1_{s-1-\epsilon}}  \leq C\|(D_x-\xi_m) u_m\|_{L^2_s}. \]

 When $\xi_m\in i\Rm$, we use \eqref{eq:lambda} to deduce that
 \[ 
    \|u_m\|_{H^1_s} \leq \frac{C(1+|\xi_m|)}{|\xi_m| (|\xi_m|\wedge 1)^s}  \|(D_x-\xi_m) \|_{L^2_s}.
 \]
 Since $a<\lambda<b$ and $(a,b)\cap Z_D=\emptyset$, we deduce from the definition of $\xi_m$ that $|\xi_m(\lambda)|$ is bounded below uniformly in that interval. Note, however, that $|\xi_m(\lambda)|$ tends to $0$ as $\lambda$ approaches $Z_D$.

 Summing these inequalities over $m$ and using \eqref{eq:normbound}, we deduce that
 \[
    \|u\|^2_{L^2_{s-1-\epsilon}} + \|D_x u\|^2_{L^2_{s-1-\epsilon}} \leq C \|(H_0-\lambda)u \|^2_{L^2_s}
 \]
 for $s\geq0$. The system $(H_0-\lambda)u=f$ may be recast as
 \[
   \fa^* u_2 = f_1 - (D_x-\lambda) u_1,\quad \fa u_1 = f_2 + (D_x+\lambda) u_2.
 \]
 From the properties of $\fa$ and $\fa^*$, this implies that 
 \[
    \| y u_j\|_{L^2_{s-1-\epsilon}} + \|D_y u_j \|_{L^2_{s-1-\epsilon}} \leq C \|f\|_{L^2_{s}} + \|u\|_{L^2_{s-1-\epsilon}} \leq C \|f\|_{L^2_{s}}.
 \]
 This concludes the derivation of \eqref{eq:H0real}.
\end{proof}

%%% END OLD SECTION
%%%%%%%%

%
\subsection{Scattering matrix and interface current observable}\label{sec:scattering}
The objective of this section is to prove [H3] for the Dirac operator. 
Fix an energy $E\in\mathbb{R}\setminus Z_H$. % We al$\rME\geq1$. 
We decompose the generalized eigenfunction $\psi^Q_m(x,y;E)$ in the basis of $\phi_q(y;E)$ in \eqref{eq:phimE} as
\begin{gather}\label{eigen}
    \psi_m^Q(x,y;E)=\sum_{q\in M}A_q(x)\phi_q(y)=\sum_{q\in M}B_q(x)e^{i\xi_qx}\phi_q(y).
\end{gather}
The decomposition consists of finitely many propagating modes $q\in M(E)$ and countably many evanescent modes in $M\backslash M(E)$. The cardinality of $M(E)$ was denoted by $\rME$ in Hypothesis [H1](iv). Note that in our setting, $\rME\geq1$ for all $E\in\Rm \backslash Z_H$.
We wish to show that in the limit $x\to\pm\infty$, $\psi^Q_m$ is well approximated as a linear combination of propagating modes. 
\begin{proposition}
The generalized eigenfunctions satisfy the following approximate decomposition:
\begin{equation}\label{eq:H3Dirac}
\psi^Q_m(x,y)\approx \sum_{q\in M(E)}\alpha_{mq}^\pm e^{i\xi_qx}\phi_q(y),
\end{equation}
with respect to norm $|||u|||=\underset{x\in\mathbb{R}}{\max}\ \|u(x,\cdot)\|_{L^2(\cdot)}$; see \eqref{eq:convergenceatinfinity} below for a more precise statement.
\end{proposition}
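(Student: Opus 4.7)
The plan is to exploit the identity $\psi_m^Q = \psi_m - R(E+i0)(Q\psi_m)$ from \eqref{eq:perturbedxi}. Since $\psi_m = (2\pi)^{-1/2}e^{i\xi_m x}\phi_m(y;E)$ is already a pure propagating mode, the task reduces to analyzing the asymptotic behavior of $u := R(E+i0)(Q\psi_m)$. Physically $u$ should decompose into outgoing waves at $\pm\infty$: only modes with $J_q > 0$ surviving at $+\infty$ and only $J_q < 0$ at $-\infty$, this being the outgoing radiation condition encoded by $+i0$. Adding back the $\psi_m$ contribution then yields \eqref{eq:H3Dirac}, and one reads off $\alpha^-_{mm} = 1$ when $J_m > 0$ and $\alpha^+_{mm} = 1$ when $J_m < 0$ (the infinity at which $\psi_m$ is itself ``incoming'').

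To make this rigorous, first pass from $R$ to $R_0$ via the second resolvent identity, obtaining $u = R_0(E+i0)\tilde f$ with $\tilde f := Q\psi_m - Qu = Q\psi_m^Q$. Since $\psi_m^Q \in H^p_{-s'}$ for $s' > \tfrac12$ by Theorem \ref{thm:pla1} and $|Q(x,y)| \lesssim \aver{x}^{-h}$ with $h > 1$, we get $\tilde f \in L^2_s$ for some $s \in (\tfrac12, h-\tfrac12)$; in particular the partial Fourier transform $\hat{\tilde f}(\xi,\cdot) \in L^2_y$ and its spectral coefficients $\hat{\tilde f}_j(\xi) = (\hat{\tilde f}(\xi,\cdot), \phi_j(\cdot;\xi))_{L^2_y}$ are continuous in $\xi$ by Cauchy--Schwarz. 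The spectral decomposition \eqref{eq:spectralH0} of $H_0$ then gives, for $\eta > 0$,
\begin{equation*}
R_0(E+i\eta)\tilde f(x,y) = \frac{1}{\sqrt{2\pi}}\sum_{j \in M}\int_{\Rm} \frac{\hat{\tilde f}_j(\xi)\,\phi_j(y;\xi)}{E_j(\xi) - E - i\eta}\, e^{i\xi x}\, d\xi,
\end{equation*}
and $\eta \to 0^+$ is controlled by Theorem \ref{thm:pla}. For each $j$ there are finitely many real poles at propagating wavenumbers $\xi_q$, where $E_j'(\xi_q) = J_q \ne 0$, and purely imaginary poles at evanescent ones. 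Localizing each real pole with a cutoff $\chi_q$ and writing
\begin{equation*}
\frac{1}{E_j(\xi) - E - i\eta} = \frac{\chi_q(\xi)}{J_q(\xi - \xi_q - i\eta/J_q)} + \mathcal{R}_q(\xi, \eta),
\end{equation*}
with $\mathcal{R}_q$ jointly continuous up to $\eta = 0$, the elementary Fourier identity $\int g(\xi) e^{i\xi x}/(\xi - \xi_q \mp i0)\, d\xi = \pm 2\pi i\, g(\xi_q) e^{i\xi_q x}\mathbf{1}_{\pm x > 0} + (\text{Riemann--Lebesgue remainder})$ extracts exactly one propagating mode per real pole, with the sign of $J_q$ selecting which infinity it survives. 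Evanescent poles contribute terms $\propto e^{-|\operatorname{Im}\xi_q||x|}$ that decay exponentially, while $\mathcal{R}_q$ is smooth in $\xi$, so its $x$-Fourier integral vanishes as $|x| \to \infty$ by Riemann--Lebesgue.

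The main obstacle I expect is upgrading this mode-by-mode analysis to convergence in the $|||\cdot|||$ norm, i.e.\ uniform in $x$ for the $L^2_y$-norm as $|x| \to \infty$, rather than just pointwise in $y$. I would split the sum over $j$ into a finite resonant part (those branches $E_j$ with $\xi_j^{-1}(E) \ne \emptyset$ or approaching $E$ on a bounded $\xi$-window), handled directly by the residue/Plemelj extraction above, and an off-resonant tail on which $|E_j(\xi) - E|$ is bounded below uniformly in $\xi$. On the tail, the uniform $\mathcal{B}(L^2_s, H^p_{-s})$ bound on $R_0(E+i\eta)$ from Theorem \ref{thm:pla}, combined with the Parseval identity for the unperturbed transform $\mF$ in \eqref{eq:FT}, controls the tail in $L^2_y$-norm uniformly in $x$. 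The Riemann--Lebesgue estimates for the smooth remainders are promoted to $L^2_y$-valued ones using the continuity of $\xi \mapsto \phi_j(\cdot;\xi) \in L^2_y$ from Hypothesis \ref{hyp:H1}(iii) together with the $\xi$-decay of $\hat{\tilde f}_j$ (which uses the smoothness and growth of $E_j$ for $|\xi|$ large). Collecting all contributions gives $\|\psi_m^Q(x,\cdot) - \sum_{q \in M(E)}\alpha^\pm_{mq}\, e^{i\xi_q x}\phi_q(\cdot;E)\|_{L^2_y} \to 0$ as $\pm x \to +\infty$, which is the precise meaning of \eqref{eq:H3Dirac}.
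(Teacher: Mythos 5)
Your approach is genuinely different from the paper's. The paper does not go through $R_0(E+i0)\tilde f$ and Plemelj/residue extraction at all; instead it exploits the first-order nature of the Dirac operator directly. Writing $\psi^Q_m = \sum_q A_q(x)\phi_q(y) = \sum_q B_q(x)e^{i\xi_q x}\phi_q(y)$ and using the identity $(\hat H_0(\xi_q)-E)\phi_q = 0$ together with the Dirac-specific relations $\phi_q = \tfrac{E}{\xi_q}\sigma_3\phi_q + (1-\tfrac{E}{\xi_q})\sigma_3\phi_p$ (where $p=(-\epsilon_q,n)$ is the conjugate of $q=(\epsilon_q,n)$), the equation $(H_0-E)\psi^Q_m = -Q\psi^Q_m$ decouples into scalar first-order ODEs $B_q'(x) = ie^{-i\xi_q x}\big[\tfrac{E}{\xi_q}a_q(x) + (1+\tfrac{E}{\xi_q})a_p(x)\big]$ with $a_q\in L^2_t$, $t>\tfrac12$. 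Propagating modes ($\xi_q\in\Rm$) are then handled by direct integration ($a_q\in L^1$, giving convergence of $B_q$ with rate $O(|x|^{1/2-t})$); evanescent modes ($\xi_q\in i\Rm$) by Lemma~\ref{lmm:1dim2} and the one-dimensional Sobolev embedding, giving pointwise decay $(1+|x|)^{-t}|A_q(x)|\lesssim \|a_q\|_{L^2_t}+\|a_p\|_{L^2_t}$ with a constant that improves as $|\xi_q|\to\infty$. Summing over modes then produces \eqref{eq:convergenceatinfinity} directly. Your resolvent/Plemelj route is the classical and more operator-agnostic one (it would in principle also verify [H3] for the Klein--Gordon case without the $2\times 2$ conjugate-pair trick), whereas the paper's is shorter and tailored to the Dirac structure.

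That said, your sketch has a genuine gap precisely where you flagged the ``main obstacle.'' The statement that the uniform $\mathcal{B}(L^2_s,H^p_{-s})$ bound on $R_0(E+i\eta)$ ``controls the tail in $L^2_y$-norm uniformly in $x$'' gives \emph{boundedness} of $\sup_x\|v(x,\cdot)\|_{L^2_y}$, not \emph{decay} as $|x|\to\infty$, which is what \eqref{eq:H3Dirac} requires. Moreover, $H^p_{-s}$ has a weight $\aver{x}^{-s}$ that \emph{allows} growth in $x$, so membership there is too weak even to infer boundedness of the trace $\|v(x,\cdot)\|_{L^2_y}$ without an extra argument. What is actually needed — and not supplied by your sketch — is that on the off-resonant part of the spectrum (and for the smooth remainders $\mathcal{R}_q$ after subtracting the simple poles) the resolvent \emph{gains} weight, mapping $L^2_s$ into $H^1_s$ with positive weight $s>\tfrac12$. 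Once one has $v_{\mathrm{tail}}\in H^1_s$ (in particular $v_{\mathrm{tail}}, \partial_x v_{\mathrm{tail}}\in L^2_s$), the one-dimensional estimate
\begin{equation*}
\|v(x,\cdot)\|_{L^2_y}^2 \;=\; -\int_x^{\infty} 2\,\mathrm{Re}\big(v(x',\cdot),\partial_x v(x',\cdot)\big)_{L^2_y}\,dx' \;\lesssim\; \aver{x}^{-2s}\,\|v\|_{L^2_s}\|\partial_x v\|_{L^2_s}
\end{equation*}
yields the required decay. This is exactly the role played in the paper by Lemma~\ref{lmm:1dim2} and the 1D Sobolev inequality. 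Without this ingredient your residue argument establishes the correct pointwise (in $y$) structure of the wave at infinity but does not close the $L^2_y$-norm estimate in \eqref{eq:convergenceatinfinity}. A secondary gap is that the Bochner-valued Riemann--Lebesgue step requires verifying $\xi\mapsto \mathcal{R}_q(\xi,0)\hat{\tilde f}_j(\xi)\phi_j(\cdot;\xi)\in L^1(\Rm;L^2_y)$, summably in $j$; for the Dirac operator this is true because $|E_j(\xi)|\gtrsim\aver{\xi}$ away from poles, but it should be said.
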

\begin{proof}
$\psi^Q_m(x,y;E)$ satisfies $\left(H_0-E\right)\psi^Q_m=-Q\psi^Q_m$.
We decompose $-Q\psi^Q_m$ in the basis $\phi_q(y;E)$:
$$-Q\psi^Q_m=\sum_{q\in M}a_q(x)\phi_q(y),\qquad a_q(x)=(-Q\psi^Q_m,\ \phi_q(y))_y.$$
%here $a_q(x)=(-Q\Psi_m,\ \phi_q(y))_y$. 
Since $\psi^Q_m\in H_{-s}^1$ for $s>\frac{1}{2}$ and $|Q(x,y)|=O(|x|^{-h})$ for some $h>1$, then $Q\psi^Q_m\in L_t^2$ for some $t>\frac{1}{2}$ and hence $a_q(x)\in L_t^2$.

Let $p=(-\epsilon_q,n)$ be the conjugate of $q=(\epsilon_q, n)$. Then it holds that
\begin{gather}\label{claim}
    (H_0-E)\left[A_q(x)\phi_q(y)+A_p(x)\phi_p(y)\right]=a_q(x)\phi_q(y)+a_p(x)\phi_p(y).
\end{gather}
The proof is now based on a direct computation. %Then the result is a direct corollary. 
Since $(\hat H_0(\xi_m)-E)\phi_m(E)=0$ and $\xi_p=-\xi_q$ we verify that 
\[
  \phi_q = \frac{E}{\xi_q}\sigma_3\phi_q+(1-\frac{E}{\xi_q})\sigma_3\phi_p\quad\mbox{and}\quad 
  \phi_p=(1+\frac{E}{\xi_q})\sigma_3\phi_q-\frac{E}{\xi_q}\sigma_3\phi_p.
\]
From the above and \eqref{claim}, we deduce that 
\begin{gather*}
     \left[-iA'_q(x)-\xi_qA_q(x)\right]\sigma_3\phi_q+\left[-iA'_p(x)-\xi_pA_p(x)\right]\sigma_3\phi_p=a_q(x)\phi_q(y)+a_p(x)\phi_p(y)\\
=a_q(x)\left[\frac{E}{\xi_q}\sigma_3\phi_q+(1-\frac{E}{\xi_q})\sigma_3\phi_p\right]+a_p(x)\left[(1+\frac{E}{\xi_q})\sigma_3\phi_q-\frac{E}{\xi_q}\sigma_3\phi_p\right].
\end{gather*}
Thus
\begin{gather*}
   B_q'(x)=ie^{-i\xi_qx} \big( -iA'_q(x)-\xi_qA_q(x)\big) = ie^{-i\xi_qx} \Big[\frac{E}{\xi_q}a_q(x)+\left(1+\frac{E}{\xi_q}\right)a_p(x)\Big].
\end{gather*}

When $\xi_q\in \mathbb{R}$, since $a_q, a_p\in L_{s}^2$ for some $s>\frac{1}{2}$, then  $a_q, a_p\in L^1$, and hence $B_q(x)$ converges to two constants as $x\to\pm\infty$, i.e., $\underset{x\to\pm\infty}{\lim}B_q(x)=\alpha_{mq}^{\pm}.$
Moreover, 
\begin{gather}\label{e1}
    |B_q(x)-\alpha_{mq}^\pm|=O(|x|^{\frac{1}{2}-t})\ \text{as}\ x\to\pm\infty.
\end{gather}

When $\xi_q\in i\mathbb{R}$, with $\xi_p=-\xi_q\in i\Rm$, then by applying Lemma \ref{lmm:1dim2}, it follows that there exists a constant independent of $q$ such that
    $
        \|A_q(x)\|_{H_t^1}\leq C\|a_q\|_{L_t^2}+C\|a_p\|_{L_t^2}
    $
    so $\lim_{|x|\to\infty}A_q(x)=0$. Moreover, $(1+|x|)^tA_q(x)\in H^1$ and by Sobolev's inequality, we have \begin{gather*}
    (1+|x|)^t|A_q(x)|\leq C_1 ( \|a_q\|_{L_t^2} + \|a_p\|_{L_t^2}).
\end{gather*}
This shows that
\begin{gather}\label{e2}
     \Big\|\sum_{q\in M \backslash M(E)}A_q(x)\phi_q(y)\Big\|_{L_y^2}\leq \frac{C_2}{(1+|x|)^t}\sum_{\xi_q\in i\mathbb{R}}\|a_q\|_{L_s^2}.
\end{gather}

Therefore, by \eqref{e1} and \eqref{e2}, we have that as $x\to\pm\infty$, 
\begin{gather}\label{eq:convergenceatinfinity}
\Big\|\psi^Q_m(x,y)-\sum_{m\in M\backslash M(E)} \alpha_{mq}^\pm\phi_q(y)\Big\|_{L_y^2}=O(|x|^{\frac{1}{2}-t})\ \text{as}\ x\to\pm\infty.
\end{gather}
This concludes the proof of the proposition.
\end{proof}
This concludes the derivation of [H3] for the Dirac operator. 

\section*{Acknowledgment} This work was supported in part by NSF Grants DMS-2306411 and DMS-1908736.

%%%%%%%%%%%%%%%%%
%\bibliography{ref} 
%\bibliographystyle{siam}

\end{document}